\documentclass[11pt]{amsart}
\usepackage{geometry}                
\geometry{letterpaper}                   
\usepackage{graphicx,stmaryrd}
\usepackage{amssymb,amsmath}
\usepackage{epstopdf}
\usepackage{tikz}
\usepackage{lscape}
\usepackage{color}
\usepackage{todonotes}
\usepackage{mathrsfs}
\usepackage{longtable}


\theoremstyle{plain}
\newtheorem{theorem}{Theorem}[section]

\newtheorem{corollary}[theorem]{Corollary}
\newtheorem{proposition}[theorem]{Proposition}
\newtheorem{lemma}[theorem]{Lemma}

\theoremstyle{definition}
\newtheorem{definition}[theorem]{Definition}
\newtheorem{example}[theorem]{Example}
\newtheorem{remark}[theorem]{Remark}

\newtheorem{question}[theorem]{Question}

\newcommand{\B}{\mathcal{B}}
\newcommand{\M}{\mathcal{M}}
\newcommand{\N}{N}
\newcommand{\NN}{\mathbb{N}}
\newcommand{\ZZ}{\mathbb{Z}}
\renewcommand{\L}{\mathcal{L}}

\newcommand{\F}{\mathbb{F}}
\newcommand{\II}{\mathbb{I}}
\newcommand{\JJ}{\mathbb{J}}

\newcommand{\R}{\ensuremath \mathbb{R}}

\renewcommand{\S}{\ensuremath \mathcal{S}}
\newcommand{\Q}{\mathcal{Q}}
\newcommand{\sgn}{\mathrm{sgn}}

\newcommand{\J}{\mathcal{J}}

\newcommand{\dia}{\Diamond}

\newcommand{\qbin}[2]{\begin{bmatrix}{#1}\\ {#2}\end{bmatrix}_q}

\newcommand{\op}{\mathrm{op}}

\newcommand{\rk}{\mathrm{rk}}
\newcommand{\crk}{\mathrm{crk}}
\newcommand{\calP}{\mathcal{P}}

\newcommand{\bs}{\backslash}
\newcommand{\Fl}{\mathcal{F}l}

\newcommand{\ga}{\ensuremath{\alpha}}

\newcommand{\gd}{\ensuremath{\delta}}

\newcommand{\gm}{\ensuremath{\mu}}

\newcommand{\gr}{\ensuremath{\rho}}

\newcommand{\gz}{\ensuremath{\zeta}}




\DeclareGraphicsRule{.tif}{png}{.png}{`convert #1 `dirname #1`/`basename #1 .tif`.png}

\title{A non-associative incidence near-ring with a generalized M\"obius function}

\author{John Johnson}
\thanks{Supported by the US Naval Academy as a Trident Scholar.}
\address{US Naval Academy\\
  572-C Holloway Rd\\
  Annapolis MD, 21402 USA}
\email[]{m213162@usna.edu}

\author{Max Wakefield}
\thanks{}
\address{US Naval Academy\\
  572-C Holloway Rd\\
  Annapolis MD, 21402 USA}
\email[]{wakefiel@usna.edu}

\begin{document}

\keywords{incidence algebra; matroid; M\"obius function; valuation}

\maketitle

{\centering\footnotesize This paper is dedicated to the memory of John Johnson.\par}

\begin{abstract} There is a convolution product on 3-variable partial flag functions of a locally finite poset that produces a generalized M\"obius function. Under the product this generalized M\"obius function is a one sided inverse of the zeta function and satisfies many generalizations of classical results. In particular we prove analogues of Phillip Hall's Theorem on the M\"obius function as an alternating sum of chain counts, Weisner's theorem, and Rota's Crosscut Theorem. A key ingredient to these results is that this function is an overlapping product of classical M\"obius functions. Using this generalized M\"obius function we define analogues of the characteristic polynomial and M\"obius polynomials for ranked lattices. We compute these polynomials for certain families of matroids and prove that this generalized M\"obius polynomial has -1 as root if the matroid is modular. Using results from Ardila and Sanchez we prove that this generalized characteristic polynomial is a matroid valuation.

\end{abstract}

\vspace{10pt}

\section{Introduction}

Combinatorial invariants in incidence algebras play a central role in many areas of combinatorics as well as in number theory, algebraic topology, algebraic geometry, and representation theory. In particular, the M\"obius function appears in the inverse of the Riemann zeta function as well as the coefficients of the chromatic polynomial for graphs. In this note we study a generalization of the classical incidence algebra by looking at three variable incidence functions. A large portion of this study is focussed on studying a 3-variable generalized M\"obius function inside this generalized incidence structure. 

Incidence algebras and M\"obius functions were popularized by Rota in \cite{rota64}. Rota characterized the classical M\"obius function from number theory (see \cite{Mobius} and \cite{Hardy-Wright}) as the inverse of the constant function {\bf 1} on the poset which is called the zeta function. In \cite{rota64} Rota gives many results on the M\"obius function including his cross-cut theorem. Since then many advances can be attributed to M\"obius functions. Of particular importance are the counting theorems of Zaslavsky in \cite{Z75} and Terao's factorization theorem (see \cite{Terao-80}) using the M\"obius function in the form of the characteristic polynomial of a hyperplane arrangement. The main motivation for this work is to build invariants which are finer than the classical M\"obius function and characteristic polynomial to obtain more information about the underlying combinatorial structure.

More recently there has been considerable developments in understanding of some classical invariants on matroids. One generalization came from Krajewski, Moffatt, and Tanasa where they built Tutte polynomials from a  Hopf algebra in \cite{KMT-18}. Taking this a little further in \cite{DFM-19} Dupont, Fink and Moci construct a categorical framework to view various combinatorial invariants where they use this framework to prove some convolution formulas. The work of Aguiar and Ardila in \cite{AA-17} framed many combinatorial structures like matroids in terms of generalized permutahedra where there is a natural Hopf monoid governing classical operations. One possible starting place for this study could be the work of Joni and Rota in \cite{JR-79}. Then in \cite{Ardila-Sanchez} Ardila and Sanchez use this Hopf monoid structure to build a concrete method for investigating valuations on many combinatorial structures. Another aim of this study is to add to add another invariant to the list of valuations and we use the methods of Ardila and Sanchez to show that one of our invariants is a valuation on matroids. One view that one can take for many combinatorial structures is that of posets (e.g. matroids are geometric lattices) and this is the view that we take here.

The starting point for our study is the collection of 3-variable functions on ordered triples of elements a poset. We equip this set of functions with the natural addition but give it a special convolution product. The motivation for this product comes from trying to symmetrize a more natural convolution product that was studied by the second author in \cite{W-flagincidence}. This product provides a kind of two sided 3-variable M\"obius function which is a sort of left inverse of the 3-variable analogue of the zeta function. We call this function the $J$-function and study many of its properties. It turns out that it is essentially a staggered product of the classical M\"obius functions and hence satisfies generalizations of many of the classical theorems on the classical M\"obius function. To prove these results we develop and use certain operations and formulas these 3-variable functions satisfy that give maps between various different types of incidence algebras. 

As a kind of application we build two different polynomials from the $J$-function: a generalized characteristic polynomial and a generalized M\"obius polynomial (see \cite{J-12} and \cite{M-12} for M\"obius polynomials). It turns out that these polynomials have some interesting properties that are not apparent from the surface. In the case of matroids the generalized characteristic polynomial has positive coefficients. Then we compute these polynomials for certain families of matroids and find special roots. Of particular interest is that the generalized M\"obius function has -1 as a root for modular matroids which mimics Theorem 1 in \cite{M-12}. However, the converse is not true and so one is led to question what do these polynomial count? Could there be some chromatic generalization for the generalized polynomials or some lattice point or finite field counting formula for these polynomials (like \cite{CF-22} or \cite{Ath-96})? 

We finish by employing the methods of Ardila and Sanchez in \cite{Ardila-Sanchez} to show that our generalized characteristic polynomial is a matroid valuation. This follows from the fact that the $J$-function splits as a product of M\"obius functions. In the case of the M\"obius polynomial we are not sure whether or not it is a valuation, yet we show that it does have a decomposition in terms of the classical characteristic polynomials. We find it interesting that this decomposition looks very similar to the recursive definition of the matroid Kazhdan-Lusztig polynomial originally defined in \cite{EPW-16}. 

We begin this study with reviewing classical results on incidence algebras and M\"obius functions in section \ref{incidence-sec}. Then we define our 3-variable incidence structure in Section \ref{near-ring-sec}. There we show that this structure has some interesting properties but that it is neither associative nor distributive. However, in Section \ref{oper-sec} we develop multiple operations which give nice formulas between these different kinds of incidence functions. Using these formulas we define a generalized M\"obius function, the $J$-function, and study its properties in Section \ref{J-func-sec}. Finally in Section \ref{chara-mob-sec} we define our generalized characteristic and M\"obius polynomials. 

{\bf Acknowledgements:} The authors are thankful for discussions with Carolyn Chun, Joel Lewis, and Will Traves. The authors are also thankful to George Andrews for help on Lemma \ref{q-john}. Frederico Ardila and Mario Sanchez significantly helped with the material on valuations for which the authors are very thankful. The authors would like to thank the US Naval Academy trident program for support during this project.

\section{Incidence Algebras}\label{incidence-sec}

Let $R$ be a commutative ring and $\calP$ be a locally finite poset. We follow \cite{Stanley-book1} and \cite{Ardila-15} for combinatorics on posets. For the remainder of this note we refer to the order in $\calP$ by $\leq$. Also, for $n\in \N$ let $[n]=\{1,2,3,\ldots ,n\}$. In this section we review basic material of incidence algebras where we follow \cite{SO97}. First we define the poset of partial flags.

\begin{definition}

The poset of \emph{partial flags of length} $k$ on $\calP$ is $$\Fl^k(\calP)=\left\{ (x_1,x_2,\ldots , x_k)\in \calP^k | \ x_1\leq x_2\leq \cdots \leq x_k\right\}$$ with order given by $(x_1,\ldots ,x_k)\preceq (y_1,\ldots ,y_k)$ if and only if for all $i\in [k]$ we have $x_i\leq y_i$.

\end{definition}

Now we define the classical incidence algebras.

\begin{definition}
The \emph{incidence algebra} on $\calP$ is the set $$\II (\calP,R)=\mathrm{Hom}(\Fl^2 (\calP),R)$$ where $R$ is a commutative ring. Addition in $\II (\calP,R)$ is given by $$(f+g)(x,y)=f(x,y)+g(x,y)$$ and the multiplication is given by convolution $$(f * g)(x,y)=\sum\limits_{x\leq a\leq b}f(x,a)g(a,y) .$$
\end{definition}

In this note we will examine multiple different operations on functions on posets. For this reason we will reserve juxtaposition only for products of elements in the ring $R$. Otherwise we will denote products of functions with specific operation names like $*$.

It turns out that $\II (\calP,R)$ is a non-commutative $R-algebra$ with identity element given by the Kronecker delta function $$\delta(x,y)=\left\{ \begin{array}{ccc}
1 & \text{  if  } & x=y\\
0 & \text{else.} &
\end{array} \right. $$
There are two other very important elements in $\II (\calP,R)$.

\begin{definition}

The \emph{zeta function} $\zeta\in \II (\calP,R)$ is defined as the constant function on $\Fl^2(\calP)$ $$\gz(x,y) =1$$ for all $(x,y)\in \Fl^2(\calP)$. The \emph{M\"obius function} $\gm \in \II (\calP,R)$ is defined by $$\sum\limits_{x\leq a\leq y} \gm (x,a) =\sum\limits_{x\leq a\leq y} \gm (a,y) = \gd(x,y)$$ for all $(x,y)\in \Fl^2(\calP)$.

\end{definition}

The M\"obius function was originally defined by M\"obius (see \cite{Mobius}) on the poset of the natural numbers ordered by division for the purpose of inverting the Riemann zeta function. Since then the M\"obius function has been used in many different contexts and broadened by the work of Rota in \cite{rota64}. For our discussions it is important to note that $\gm$ is the multiplicative inverse of the zeta function $$\gm *\gz =\gz*\gm =\gd.$$  Now we review how the incidence algebra functor factors over products. Recall that for posets $\calP$ and $\mathcal{Q}$ the product poset is $\calP\times \mathcal{Q}$ with order given by $(x_1,x_2)\leq (y_1,y_2)$ if and only if $x_1\leq y_1$ and $x_2\leq y_2$.

\begin{proposition}[Proposition 2.1.12 \cite{SO97}]\label{inc-prod}

If $\calP$ and $\mathcal{Q}$ are locally finite posets then $$\II(\calP,R) \otimes_R \II(\mathcal{Q},R)\cong \II (\calP\times \mathcal{Q} ,R).$$

\end{proposition}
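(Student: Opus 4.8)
The plan is to construct an explicit isomorphism of $R$-algebras by exploiting the natural bijection between two-element partial flags on a product poset and pairs of two-element partial flags on the factors. First I would define the candidate map $\Phi \colon \II(\calP,R) \otimes_R \II(\mathcal{Q},R) \to \II(\calP\times\mathcal{Q},R)$ on pure tensors by
\begin{equation*}
\Phi(f\otimes g)\big((x_1,x_2),(y_1,y_2)\big) = f(x_1,y_1)\,g(x_2,y_2),
\end{equation*}
and then extend $R$-linearly. The key structural observation making this well-defined is that an interval in the product poset factors as a product of intervals: $(x_1,x_2)\leq(y_1,y_2)$ in $\calP\times\mathcal{Q}$ precisely when $x_1\leq y_1$ in $\calP$ and $x_2\leq y_2$ in $\mathcal{Q}$, so $\Fl^2(\calP\times\mathcal{Q})$ is in natural bijection with $\Fl^2(\calP)\times\Fl^2(\mathcal{Q})$.

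Next I would verify that $\Phi$ is a homomorphism of $R$-algebras, checking the three requirements in turn. Additivity and $R$-linearity are immediate from the pointwise definition of addition. For multiplicativity, I would compute $\Phi\big((f_1\otimes g_1)*(f_2\otimes g_2)\big)$ and compare it with $\Phi(f_1\otimes g_1)*\Phi(f_2\otimes g_2)$; the heart of this is the factorization of the convolution sum over the product interval, namely that
\begin{equation*}
\sum_{(x_1,x_2)\leq(a_1,a_2)\leq(y_1,y_2)} f_1(x_1,a_1)f_2(a_1,y_1)\,g_1(x_2,a_2)g_2(a_2,y_2)
\end{equation*}
splits as a product of two independent sums, one over $x_1\leq a_1\leq y_1$ in $\calP$ and one over $x_2\leq a_2\leq y_2$ in $\mathcal{Q}$, since the summation ranges over the two coordinates are independent and the summand is a product of a $\calP$-factor and a $\mathcal{Q}$-factor. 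This is exactly the distributive reorganization $(\sum_i p_i)(\sum_j q_j) = \sum_{i,j} p_i q_j$. Finally I would confirm that $\Phi$ sends the identity $\delta_{\calP}\otimes\delta_{\mathcal{Q}}$ to the identity $\delta_{\calP\times\mathcal{Q}}$, which follows because $(x_1,x_2)=(y_1,y_2)$ iff $x_1=y_1$ and $x_2=y_2$.

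To finish I would show $\Phi$ is a bijection. Surjectivity and injectivity are cleanest to obtain together by exhibiting an explicit two-sided inverse, or by a dimension/basis argument when $R$ is a field; in general I would argue that the interval-indexed ``matrix unit'' functions $e_{(x,y)}$ supported on a single flag form compatible bases, and that $\Phi$ carries $e_{(x_1,y_1)}\otimes e_{(x_2,y_2)}$ to $e_{((x_1,x_2),(y_1,y_2))}$, matching basis elements of the tensor product bijectively with basis elements of the target. The one point requiring care is that $\II(\calP,R)$ consists of \emph{all} functions on intervals, so for infinite posets these ``bases'' are in the sense of a product rather than a direct sum; the subtlety is ensuring the tensor product $\II(\calP,R)\otimes_R\II(\mathcal{Q},R)$ matches up correctly with the product poset's incidence algebra on the diagonal supports. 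The main obstacle is therefore this well-definedness and matching of the tensor product over an infinite index set rather than any difficulty with the algebraic identities, which are purely formal consequences of the product structure of intervals; because $\calP$ and $\mathcal{Q}$ are only assumed locally finite, I would take care to phrase the bijection at the level of each individual interval $[(x_1,x_2),(y_1,y_2)]$, where finiteness holds, and invoke local finiteness to guarantee every convolution sum above is finite.
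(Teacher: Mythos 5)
First, a point of reference: the paper does not prove this proposition at all --- it is quoted verbatim from \cite{SO97} --- so your argument has to stand on its own. The first half of your proposal is correct and is the standard argument: the assignment $\Phi(f\otimes g)\bigl((x_1,x_2),(y_1,y_2)\bigr)=f(x_1,y_1)\,g(x_2,y_2)$ is $R$-bilinear and hence factors through the tensor product, it sends $\delta_\calP\otimes\delta_{\mathcal{Q}}$ to $\delta_{\calP\times\mathcal{Q}}$, and it is multiplicative because the interval $[(x_1,x_2),(y_1,y_2)]$ in $\calP\times\mathcal{Q}$ is exactly the product of the intervals $[x_1,y_1]$ and $[x_2,y_2]$, so the convolution sum factors as you describe. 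Local finiteness guarantees all sums are finite. No complaints there.

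The genuine gap is precisely at the point you flag and then set aside: bijectivity. The ``matrix unit'' functions $e_{(x,y)}$ are \emph{not} an $R$-module basis of $\II(\calP,R)$ when $\calP$ is infinite; they span only the finitely supported functions, whereas $\II(\calP,R)$ is by definition the full product of copies of $R$ indexed by $\Fl^2(\calP)$. Consequently the basis-matching argument does not apply, and in fact $\Phi$ need not be surjective: every element of $\II(\calP,R)\otimes_R\II(\mathcal{Q},R)$ is a \emph{finite} sum of pure tensors, so every function in the image of $\Phi$ is a finite sum of product functions $f(x_1,y_1)g(x_2,y_2)$. Taking $\calP=\mathcal{Q}=\NN$ as infinite antichains, the function $h\bigl((m,m),(n,n)\bigr)=\delta_{mn}$ on $\Fl^2(\calP\times\mathcal{Q})$ (an ``infinite identity matrix'') is not such a finite sum, so it lies outside the image. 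Arguing ``interval by interval,'' as you propose at the end, shows $\Phi$ is an isomorphism onto each finite local piece, but that does not glue to surjectivity onto the product of all the pieces --- this is exactly the product-versus-direct-sum issue you named but did not resolve, and it cannot be resolved for this map. (Injectivity over a general commutative ring likewise does not follow from a basis count, though it can be recovered by restricting to finite subposets.) So your argument proves the proposition when $\calP$ and $\mathcal{Q}$ are finite, where the $e_{(x,y)}$ genuinely are free bases; for merely locally finite posets it produces an embedding, not an isomorphism, and the statement needs a finiteness hypothesis (or must be read as an embedding) for this approach to close. For what the paper actually uses downstream --- Corollary \ref{Mob-prod}, i.e.\ $\gm_\calP\times\gm_{\mathcal{Q}}=\gm_{\calP\times\mathcal{Q}}$ --- the interval-local form of your multiplicativity computation already suffices, with no need for the global isomorphism.
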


Because of Proposition \ref{inc-prod} we define the following operation on functions. In order the make the exposition clear in the case when we are dealing with functions over different posets then we will put the poset in the subscript. For $f_\calP\in \II(\calP,R)$ and $g_\mathcal{Q}\in \II(\mathcal{Q},R)$ define $f_\calP\times g_\mathcal{Q}\in \II(\calP\times \mathcal{Q},R)$ by $$(f_\calP\times g_\mathcal{Q})((x_1,x_2),(y_1,y_2))=f(x_1,y_1)g(x_2,y_2).$$ Will use this notation and the following consequence of Proposition \ref{inc-prod} in our study in Section \ref{J-func-sec}. 

\begin{corollary}\label{Mob-prod}

If $\calP$ and $\mathcal{Q}$ are locally finite posets then $\gm_\calP \times \gm_\mathcal{Q}=\gm_{\calP\times\mathcal{Q}}$.

\end{corollary}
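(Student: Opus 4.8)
The plan is to exploit the characterization of the M\"obius function as the unique two-sided inverse of the zeta function under convolution, together with Proposition \ref{inc-prod}. Since $\gm_{\calP\times\mathcal{Q}}$ is determined by the equations $\gm_{\calP\times\mathcal{Q}} * \gz_{\calP\times\mathcal{Q}} = \gz_{\calP\times\mathcal{Q}} * \gm_{\calP\times\mathcal{Q}} = \gd_{\calP\times\mathcal{Q}}$, it suffices to verify that the candidate $\gm_\calP \times \gm_\mathcal{Q}$ satisfies these same equations and then invoke uniqueness of inverses in the associative algebra $\II(\calP\times\mathcal{Q},R)$.

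Two elementary identities feed into this. First, directly from the definitions one checks $\gz_\calP \times \gz_\mathcal{Q} = \gz_{\calP\times\mathcal{Q}}$, since each side is the constant function $1$ on $\Fl^2(\calP\times\mathcal{Q})$. Second, $\gd_\calP \times \gd_\mathcal{Q} = \gd_{\calP\times\mathcal{Q}}$, because $(x_1,x_2)=(y_1,y_2)$ holds precisely when $x_1=y_1$ and $x_2=y_2$, so the product of the two Kronecker deltas is the Kronecker delta on the product poset. The crucial remaining ingredient is that the operation $\times$ is multiplicative with respect to convolution, that is,
$$(f_\calP \times g_\mathcal{Q}) * (f'_\calP \times g'_\mathcal{Q}) = (f_\calP * f'_\calP) \times (g_\mathcal{Q} * g'_\mathcal{Q}).$$
This is exactly the statement that the isomorphism of Proposition \ref{inc-prod} carries the tensor product of incidence algebras to the incidence algebra of the product poset as \emph{algebras}; equivalently it follows by expanding both sides and using that the interval $[(x_1,x_2),(y_1,y_2)]$ in $\calP\times\mathcal{Q}$ factors as $[x_1,y_1]\times[x_2,y_2]$, which lets the convolution sum split as a product of two independent sums.

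Granting this multiplicativity, the computation is immediate:
$$(\gm_\calP \times \gm_\mathcal{Q}) * \gz_{\calP\times\mathcal{Q}} = (\gm_\calP \times \gm_\mathcal{Q}) * (\gz_\calP \times \gz_\mathcal{Q}) = (\gm_\calP * \gz_\calP) \times (\gm_\mathcal{Q} * \gz_\mathcal{Q}) = \gd_\calP \times \gd_\mathcal{Q} = \gd_{\calP\times\mathcal{Q}},$$
and symmetrically $\gz_{\calP\times\mathcal{Q}} * (\gm_\calP \times \gm_\mathcal{Q}) = \gd_{\calP\times\mathcal{Q}}$. Thus $\gm_\calP \times \gm_\mathcal{Q}$ is a two-sided convolution inverse of $\gz_{\calP\times\mathcal{Q}}$, and by uniqueness it equals $\gm_{\calP\times\mathcal{Q}}$.

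I expect the main obstacle to be the multiplicativity identity, i.e. checking that $\times$ really is a ring (not merely additive) homomorphism. One can either cite this as the algebra-isomorphism content of Proposition \ref{inc-prod}, or prove it by hand; the hand proof reduces to the interval factorization in the product poset, after which the double sum over $x_1\leq a_1\leq y_1$ and $x_2\leq a_2\leq y_2$ separates. A fully self-contained alternative avoids the inverse characterization altogether: verify directly that $\gm_\calP \times \gm_\mathcal{Q}$ satisfies the defining recurrence of $\gm_{\calP\times\mathcal{Q}}$ by summing $(\gm_\calP \times \gm_\mathcal{Q})((x_1,x_2),(a_1,a_2))$ over the interval and factoring the sum using the same interval decomposition; this is arguably the shortest route and sidesteps any appeal to associativity of $*$.
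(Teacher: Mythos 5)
Your proof is correct and follows exactly the route the paper intends: the corollary is stated as a consequence of Proposition \ref{inc-prod}, and your argument spells out that consequence by using the multiplicativity of $\times$ with respect to convolution (the algebra-isomorphism content of that proposition), the identities $\gz_\calP \times \gz_\mathcal{Q} = \gz_{\calP\times\mathcal{Q}}$ and $\gd_\calP \times \gd_\mathcal{Q} = \gd_{\calP\times\mathcal{Q}}$, and uniqueness of the convolution inverse. The paper omits the details entirely, so your write-up simply fills in the intended argument.
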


Next we recall how the M\"obius function counts chains (or is an Euler characteristic for the order complex). For $(x,y)\in \Fl^2(\calP)$ let 
$$c_i(x,y)=\left|\{(a_0,\ldots,a_i)\in \Fl^{i+1}:\forall k,\ a_k<a_{k+1} \text{ and }a_0=x \text{ and } a_i=y\}\right|$$ be the number of chains of length $i$ between $x$ and $y$. 

\begin{theorem}[Phillip Hall's Theorem \cite{Hall-OG}; Prop. 3.8.5 \cite{Stanley-book1}]\label{hall-orig}

If $\calP$ is a locally finite poset and $(x,y)\in\Fl^2(\calP)$ then $$\gm(x,y)=\sum\limits_i (-1)^ic_i(x,y).$$

\end{theorem}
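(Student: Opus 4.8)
The plan is to recognize the alternating chain-count sum as a geometric series in the incidence algebra $\II(\calP,R)$ and thereby identify it with the inverse of $\gz$. First I would introduce the \emph{strict zeta function} $\gh=\gz-\gd$, so that $\gh(x,y)=1$ when $x<y$ and $\gh(x,y)=0$ otherwise. The key computational observation is that the $i$-fold convolution power of $\gh$ counts strict chains: $\gh^{*i}(x,y)=c_i(x,y)$ for every $(x,y)\in\Fl^2(\calP)$. This follows by unwinding the definition of convolution (using associativity of $*$), since
$$\gh^{*i}(x,y)=\sum_{x\leq a_1\leq\cdots\leq a_{i-1}\leq y}\gh(x,a_1)\gh(a_1,a_2)\cdots\gh(a_{i-1},y),$$
and a summand equals $1$ precisely when $x<a_1<\cdots<a_{i-1}<y$, i.e.\ exactly when $(x,a_1,\ldots,a_{i-1},y)$ is a strict chain of length $i$. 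I would record this as a short induction on $i$, with base cases $\gh^{*0}=\gd$ (matching $c_0$) and $\gh^{*1}=\gh$ (matching $c_1$).

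With this identity in hand, the asserted formula becomes $\sum_i(-1)^ic_i(x,y)=\sum_i(-1)^i\gh^{*i}(x,y)=\bigl(\sum_i(-\gh)^{*i}\bigr)(x,y)$. Since $\gz=\gd+\gh$, it suffices to prove that $\sum_{i\geq 0}(-\gh)^{*i}$ is a right inverse of $\gz$ in $\II(\calP,R)$; uniqueness of the two-sided inverse $\gm$ then forces equality. I would verify this by the standard telescoping identity
$$\gz*\Bigl(\sum_{i=0}^{n}(-\gh)^{*i}\Bigr)=\gd+(-1)^n\gh^{*(n+1)},$$
obtained by expanding $\gz=\gd+\gh$ and cancelling consecutive terms, and then passing to the limit in $n$.

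The main obstacle is purely a matter of convergence: the sum $\sum_{i\geq 0}(-\gh)^{*i}$ is an infinite series of incidence functions, so I must justify that it defines an element of $\II(\calP,R)$ and that the telescoping limit makes sense. This is exactly where local finiteness of $\calP$ is used: for a fixed interval $[x,y]$ there is a finite bound on the length of any strict chain from $x$ to $y$, so $c_i(x,y)=\gh^{*i}(x,y)=0$ for all sufficiently large $i$. Consequently, on each interval both the alternating sum and the telescoping remainder $\gh^{*(n+1)}(x,y)$ vanish for $n$ large, so evaluating at $(x,y)$ reduces every infinite sum to a finite one and the identity $\gz*\bigl(\sum_i(-\gh)^{*i}\bigr)=\gd$ holds pointwise. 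By uniqueness of the inverse of $\gz$, namely $\gm$, this yields $\gm(x,y)=\sum_i(-1)^ic_i(x,y)$.
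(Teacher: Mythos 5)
Your argument is correct and is precisely the standard proof of Philip Hall's theorem that the paper cites (Prop.\ 3.8.5 in Stanley): write $\gz=\gd+\gh$, identify $\gh^{*i}(x,y)=c_i(x,y)$, and invert $\gz$ as the alternating geometric series $\sum_i(-1)^i\gh^{*i}$, with local finiteness guaranteeing that everything is a finite sum on each interval and associativity giving uniqueness of the inverse. The paper itself offers no proof of this classical statement, so there is nothing further to compare.
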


Now we review Rota's Cross-cut Theorem. Let $L$ be a finite lattice with $\hat{0}$ the minimum element and $\hat{1}$ the maximum element. Usually Rota's cross-cut Theorem is stated globally in the lattice giving a formula for $\gm(\hat{0},\hat{1})$. However for our generalization we will need a local version.

\begin{definition}\label{cross-def}
Let $(x,y)\in \Fl^2(L)$. A \emph{lower cross-cut} of the interval $[x,y]=\{a\in L| x\leq a\leq y\}$ is a set $S_{x,y}\subseteq [x,y]\bs \{x\}$ such that if $b\in [x,y]\bs (\S_{x,y}\cup \{x\})$ then there is some $a\in S_{x,y}$ with $a< b$. A \emph{upper cross-cut} of the interval $[x,y]$ is a set $T_{x,y}\subseteq [x,y]\bs \{y\}$ such that if $a\in [x,y]\bs (T_{x,y}\cup \{y\})$ then there is some $b\in T_{x,y}$ with $a< b$.
\end{definition}

This definition gives Rota's famous Cross-cut theorem which we state in the style of Lemma 2.35 in \cite{OT} for use in arrangement theory. 

\begin{theorem}[Theorem 3 \cite{rota64}]\label{2-cross-thm}

If $L$ is a lattice, $(x,y)\in \Fl^2(L)$, and $S_{x,y}$ is a lower cross-cut of $[x,y]$ then $$\gm (x,y)=\sum\limits_{\substack{ A\subseteq S_{x,y}\\ \bigvee A=y}} (-1)^{|A|}.$$ Dually, if $T_{x,y}$ is an upper cross-cut of $[x,y]$ then $$\gm (x,y)=\sum\limits_{\substack{ B\subseteq T_{x,y}\\ \bigwedge B=x}} (-1)^{|B|}.$$

\end{theorem}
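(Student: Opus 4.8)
The plan is to prove the first (lower cross-cut) identity by showing that its right-hand side, viewed as a function of the upper endpoint, satisfies the defining recursion of the M\"obius function, and then to invoke uniqueness. Write $S=S_{x,y}$ and, adopting the convention $\bigvee\emptyset = x$ (the bottom of the interval $[x,y]$), define for each $w\in[x,y]$
\[ F(w) = \sum_{\substack{A\subseteq S\\ \bigvee A = w}} (-1)^{|A|}. \]
I claim that $F(w) = \gm(x,w)$ for all $w\in[x,y]$; specializing to $w=y$ is exactly the assertion. Since $\gm(x,\cdot)$ is the unique function on the finite interval $[x,y]$ satisfying $\sum_{x\le w\le z}\gm(x,w) = \gd(x,z)$ for every $z\in[x,y]$ (uniqueness being immediate by induction up the interval), it suffices to verify that $F$ obeys the same recursion.

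Next I would compute the cumulative sum. For a fixed $z\in[x,y]$, grouping subsets of $S$ by their join gives
\[ \sum_{x\le w\le z} F(w) = \sum_{\substack{A\subseteq S\\ \bigvee A\le z}} (-1)^{|A|} = \sum_{A\subseteq S_z}(-1)^{|A|}, \]
where $S_z = \{s\in S : s\le z\}$, since $\bigvee A\le z$ holds precisely when every element of $A$ lies below $z$. By the binomial identity the last sum equals $1$ if $S_z=\emptyset$ and $0$ otherwise. The crucial step -- and the only place the hypothesis enters -- is the set-theoretic claim that $S_z = \emptyset$ if and only if $z = x$. Indeed, no element of $S$ can lie weakly below $x$ (such an element would equal $x$, contradicting $S\subseteq[x,y]\bs\{x\}$), so $S_x=\emptyset$; conversely, if $z\ne x$ then either $z\in S$, whence $z\in S_z$, or $z\in[x,y]\bs(S\cup\{x\})$, in which case the lower cross-cut property supplies an $a\in S$ with $a<z$, so again $S_z\ne\emptyset$. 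Hence $\sum_{x\le w\le z}F(w) = \gd(x,z)$, which is the desired recursion, and uniqueness yields $F\equiv\gm(x,\cdot)$.

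Finally, the upper cross-cut identity follows by duality rather than by a separate argument. Passing to the opposite lattice $L^{\op}$ reverses all inequalities, turns the interval $[x,y]$ into $[y,x]$ with joins and meets interchanged, and satisfies $\gm_{L^{\op}}(y,x)=\gm_L(x,y)$; moreover an upper cross-cut of $[x,y]$ in $L$ is exactly a lower cross-cut of $[y,x]$ in $L^{\op}$, and the condition $\bigwedge_L B = x$ becomes $\bigvee_{L^{\op}} B = x$. Applying the already-proved lower cross-cut formula in $L^{\op}$ therefore delivers the dual statement verbatim. I expect the verification of the cross-cut claim above and the bookkeeping of this dualization to be the only genuine obstacles; the remainder is the standard inclusion--exclusion together with the uniqueness of the M\"obius recursion.
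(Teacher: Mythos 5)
The paper does not prove this statement---it is quoted as a classical result (Theorem 3 of Rota's 1964 paper, stated in the style of Lemma 2.35 of \cite{OT})---so there is no in-paper argument to compare against. Your proof is correct and is essentially the standard one: with the convention $\bigvee\emptyset=x$, the cumulative sums $\sum_{x\le w\le z}F(w)=\sum_{A\subseteq S_z}(-1)^{|A|}$ collapse to $\gd(x,z)$ precisely because the lower cross-cut condition forces $S_z\neq\emptyset$ for every $z\neq x$, so $F$ satisfies the defining recursion of $\gm(x,\cdot)$ and uniqueness finishes it; the dualization to $L^{\op}$ for the upper cross-cut statement is also handled correctly, using that the paper's two-sided definition of $\gm$ gives $\gm_{L^{\op}}(y,x)=\gm_L(x,y)$.
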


Next we consider Weisner's Theorem (see \cite{Weis}).

\begin{theorem}[Weisner's Theorem, Corollary 3.9.3 \cite{Stanley-book1}]\label{Weisner}

If $L$ is a finite lattice with at least two elements and $\hat{1}\neq a\in L$ then $$\sum\limits_{\substack{ x\in L \\ x\wedge a =\hat{0}}} \gm (x,\hat{1})=0.$$

\end{theorem}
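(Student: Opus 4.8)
The plan is to deduce Weisner's theorem from the structure of the \emph{meet algebra} of $L$. Let $A$ be the free $R$-module on the underlying set of $L$, equipped with the $R$-bilinear product determined on basis elements by $x\cdot y = x\wedge y$; this is a commutative associative $R$-algebra whose identity is $\hat{1}$, since $\hat{1}\wedge y = y$. The entire statement will fall out as the coefficient of the basis vector $\hat{0}$ in a single product computed in two different ways, so the real content is an algebraic identity in $A$ together with one application of the defining relation of $\gm$.

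The key step, and the one I expect to be the main obstacle to write cleanly, is producing a family of orthogonal idempotents. For each $x\in L$ I would set $\sigma_x = \sum_{y\le x}\gm(y,x)\,y \in A$, which by M\"obius inversion is equivalent to $x = \sum_{y\le x}\sigma_y$. To show the $\sigma_x$ are orthogonal idempotents I would use, for each $x$, the $R$-linear map $\phi_x\colon A\to R$ sending a basis element $y$ to $1$ if $y\ge x$ and to $0$ otherwise. Because $y\wedge z\ge x$ holds exactly when $y\ge x$ and $z\ge x$, each $\phi_x$ is an $R$-algebra homomorphism, and the defining relation of the M\"obius function gives $\phi_x(\sigma_z)=\sum_{x\le y\le z}\gm(y,z)=\gd(x,z)$. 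Since the transition matrix $(\phi_x(y))_{x,y}$ is the (invertible) zeta matrix, the joint map $(\phi_x)_{x\in L}\colon A\to R^{L}$ is injective; comparing $\phi_w(\sigma_x\sigma_z)=\gd(w,x)\gd(w,z)$ with $\phi_w(\gd(x,z)\sigma_x)$ for every $w$ then forces $\sigma_x\sigma_z=\gd(x,z)\sigma_x$. This is the heart of the argument.

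With the idempotents in hand I would compute $a\cdot\sigma_{\hat{1}}$ two ways. Expanding $\sigma_{\hat{1}}=\sum_{y\in L}\gm(y,\hat{1})\,y$ and using $a\cdot y=a\wedge y$ gives $a\cdot\sigma_{\hat{1}}=\sum_{y\in L}\gm(y,\hat{1})(a\wedge y)$, whereas writing $a=\sum_{z\le a}\sigma_z$ and using orthogonality gives $a\cdot\sigma_{\hat{1}}=\sum_{z\le a}\gd(z,\hat{1})\sigma_{\hat{1}}$, which is $\sigma_{\hat{1}}$ if $\hat{1}\le a$ and is $0$ otherwise; here the hypothesis $a\neq\hat{1}$ is exactly what makes this vanish, since $a\le\hat{1}$ always holds and $\hat{1}\le a$ would force $a=\hat{1}$. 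Equating the two expressions and reading off the coefficient of the basis vector $\hat{0}$ finishes the proof: a summand $\gm(y,\hat{1})(a\wedge y)$ contributes to the $\hat{0}$-coordinate precisely when $a\wedge y=\hat{0}$, so that coefficient equals $\sum_{y\colon y\wedge a=\hat{0}}\gm(y,\hat{1})$, which must therefore be $0$. The only bookkeeping to be careful about is that all sums are finite (guaranteed since $L$ is a finite lattice) and that the two expansions really are of the same element of $A$.
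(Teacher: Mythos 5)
Your argument is correct: the orthogonal idempotents $\sigma_x$, the homomorphisms $\phi_x$, and the two expansions of $a\cdot\sigma_{\hat{1}}$ all check out, and the coefficient extraction at $\hat{0}$ gives exactly the claimed identity. The paper states Weisner's theorem without proof, citing Stanley, and your proof is essentially the standard M\"obius-algebra argument from that cited source, so there is nothing to compare beyond noting the match.
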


Now we recall one more result that follows from these classical results for matroids: the M\"obius function alternates on matroids.

\begin{lemma}\label{alternates}

If $L$ is a finite semimodular lattice then $\sgn (\gm(x,y))=(-1)^{\rk(x)+\rk(y)}$.

\end{lemma}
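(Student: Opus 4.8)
The plan is to reduce to the global statement and then evaluate $\gm(\hat 0,\hat 1)$ with Rota's Cross-cut Theorem. First I would observe that for $(x,y)\in\Fl^2(L)$ the interval $[x,y]$ is itself a finite semimodular lattice, with rank function $\rk(z)-\rk(x)$, and that $\gm(x,y)$ depends only on this interval. Since $(-1)^{\rk(x)+\rk(y)}=(-1)^{\rk(y)-\rk(x)}$, it therefore suffices to prove that for every finite semimodular lattice $L$ of rank $r=\rk(\hat 1)$ one has $(-1)^r\gm(\hat 0,\hat 1)>0$. The cases $r=0$ (where $\gm=1$) and $r=1$ (where $\gm=-1$) are immediate, and give the base of the induction used below.

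Next I would apply Theorem \ref{2-cross-thm} to the interval $[\hat 0,\hat 1]$ with the lower cross-cut $S$ taken to be the set $A$ of atoms of $L$; in a graded lattice every non-atom above $\hat 0$ dominates an atom, so $A$ is indeed a lower cross-cut. This yields
\[
\gm(\hat 0,\hat 1)=\sum_{\substack{B\subseteq A\\ \bigvee B=\hat 1}}(-1)^{|B|}.
\]
Semimodularity enters through $\rk(u\vee v)\le \rk(u)+\rk(v)-\rk(u\wedge v)$: iterating this over the atoms of $B$ gives $\rk(\bigvee B)\le |B|$, so any $B$ with $\bigvee B=\hat 1$ satisfies $|B|\ge r$, with the minimal ones ($|B|=r$) the independent spanning sets. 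Writing $N_k$ for the number of $k$-element subsets of $A$ whose join is $\hat 1$, we obtain $(-1)^r\gm(\hat 0,\hat 1)=\sum_{k\ge r}(-1)^{k-r}N_k=N_r-N_{r+1}+\cdots$. One caveat to record here: this cross-cut step needs the atoms to join to $\hat 1$ (so that $N_r>0$), which fails for, e.g., a three-element chain; the hypothesis in force is thus really that $L$ is atomic, i.e.\ a geometric lattice, exactly the lattice-of-flats case relevant to matroids.

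The crux, and the step I expect to be the main obstacle, is showing this alternating sum is strictly positive. I would establish it by a sign-reversing involution on the non-minimal spanning subsets of atoms (a broken-circuit argument): after fixing a linear order on $A$, pair each spanning set that contains a broken circuit with the set obtained by toggling the relevant least atom, so these terms cancel in pairs and only the spanning sets containing no broken circuit survive, each contributing $+1$ to $(-1)^r\gm(\hat 0,\hat 1)$. Since at least one basis survives, the sum is positive. Alternatively the positivity follows by induction on $|A|$ via a deletion/contraction recursion for $\sum_k(-1)^kN_k$, with semimodularity guaranteeing that the deletion and the contraction $[\,p,\hat 1\,]$ at an atom $p$ are again semimodular of the expected ranks. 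Either way, constructing the cancelling involution (respectively, the bookkeeping of the recursion) is the technical heart; the rank bound and cross-cut reduction are routine once semimodularity is invoked.
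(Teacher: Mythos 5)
The paper itself offers no proof of Lemma \ref{alternates}: it is recalled as a classical fact (essentially Proposition 3.10.1 of \cite{Stanley-book1}), whose textbook proof runs by induction on the length of $[x,y]$ via Weisner's Theorem \ref{Weisner} --- for an atom $a$ of $[x,y]$ one writes $\gm(x,y)=-\sum\gm(x,z)$ over the $z\neq y$ with $z\vee a=y$, and semimodularity forces each such $z$ to be covered by $y$, so all terms carry the same sign by induction. Your route, cross-cut over the atoms plus a sign-reversing broken-circuit involution, is genuinely different and also sound: it is in effect Whitney's NBC theorem, and it buys strict positivity (the count of NBC bases) rather than just a sign, at the price of needing the lattice to be atomic. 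Your caveat is well taken and in fact cuts against the lemma as literally stated: in a three-element chain (modular, hence semimodular) one has $\gm(\hat 0,\hat 1)=0$, so $\sgn(\gm(\hat 0,\hat 1))=0\neq(-1)^{\rk(\hat 0)+\rk(\hat 1)}$. The correct classical statement for semimodular lattices is the weak inequality $(-1)^{\rk(x)+\rk(y)}\gm(x,y)\geq 0$; strict alternation holds when every interval is atomic, i.e.\ for geometric lattices, which is the setting the paper actually needs (e.g.\ in Proposition \ref{positive-coeffs}). So your argument proves the right theorem, just not the one as (over-)stated; if you want the genuinely semimodular version, switch to the Weisner induction, which degrades gracefully to the $\geq 0$ statement when the atoms fail to span.
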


\section{A 3-variable incidence non-associative near-ring}\label{near-ring-sec}

In this section we define algebraic structures for where our invariants live. It turns out that these algebraic structures support various operations that can yield nice formulas. Later these formulas will be used to show certain formulas and relations on our new invariants.

\begin{definition}

Let $R$ be a commutative ring and $\calP$ be a locally finite poset. Define the 3-variable incidence near-ring as $$\JJ(\calP ,R)=\mathrm{Hom}(\Fl^3(\calP),\R)$$ with binary operations as follows: 

\begin{itemize}

\item For $f,g\in \JJ(\calP ,R)$ we define addition by $$(f+g)(x,y,z)=f(x,y,z)+g(x,y,z).$$

\item For $f,g\in \mathcal{J}(\calP ,R)$ we define a multiplication by $$(f\Yright g)(x,y,z)=\sum\limits_{(a,b)\unlhd (x,y,z)}f(x,a,a)g(a,y,b)f(b,b,z)$$ where the juxtaposition in each term is multiplication in the ring $R$ and $(a,b)\unlhd (x,y,z)$ means $x\leq a\leq y\leq b\leq z$ in $\calP$.

\end{itemize}
\end{definition}

\begin{remark} With this $+$ the set $\JJ (\calP ,R)$ is an abelian group. It would be convenient if $\JJ (\calP ,R)$ were naturally an $R$-algebra. However, this is far from the case as we will see. Even the natural action of $R$ on $\JJ (\calP ,R)$ is flawed. Let $r\in R$ and $f,g\in \JJ(\calP,R)$ then $r\cdot (f\Yright g) = f \Yright (r\cdot g)$ but $(r\cdot f)\Yright g = r^2\cdot (f\Yright g)$.

\end{remark}

Fortunately though there are a few special functions in $\JJ (\calP, R)$ that provide substantial information. We will use these to study the structure of $\JJ(\calP ,R)$ and define other special elements later.

\begin{definition} Assume that $1$ is the multiplicative identity and 0 is the additive identity in $R$.

\begin{itemize}

\item Define $\gd_3\in \JJ (\calP, R)$ by $$ \gd_3 (x,y,z)= \left\{ \begin{array}{ccc} 1 & \ \ & \text{if } x=y=z \\
0& & \text{otherwise}.\\
\end{array}\right.$$

\item Define $\gz_3\in \JJ (\calP, R)$ by setting $\gz_3(x,y,z)=1$ for all $(x,y,z)\in \Fl^3(\calP )$.

\end{itemize}

\end{definition}

With these functions we can investigate basic properties of $\JJ(\calP ,R)$. 

\begin{proposition}\label{ident}

The element $\gd_3\in \JJ (\calP ,R)$ is a left multiplicative identity.

\end{proposition}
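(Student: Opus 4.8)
The plan is to unwind the definition of $\Yright$ with $f=\gd_3$ on the left and an arbitrary $g\in\JJ(\calP,R)$, and to show that the defining convolution sum collapses to a single nonzero term. Fix $(x,y,z)\in\Fl^3(\calP)$. By definition,
$$(\gd_3\Yright g)(x,y,z)=\sum_{x\leq a\leq y\leq b\leq z}\gd_3(x,a,a)\,g(a,y,b)\,\gd_3(b,b,z).$$
The strategy is simply to read off which index pairs $(a,b)$ make both $\gd_3$ factors survive.

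First I would analyze the two delta factors. The factor $\gd_3(x,a,a)$ is nonzero (and then equal to $1$) precisely when $x=a=a$, i.e. when $a=x$; likewise $\gd_3(b,b,z)$ is nonzero precisely when $b=b=z$, i.e. when $b=z$. Hence every term of the sum with $(a,b)\neq(x,z)$ contributes $0$, and the only potentially surviving term is the one indexed by $a=x$ and $b=z$.

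Next I would confirm that this index actually occurs in the range of summation. Since $(x,y,z)\in\Fl^3(\calP)$ we have $x\leq y\leq z$, so taking $a=x$ and $b=z$ gives $x\leq x\leq y\leq z\leq z$, which is exactly the condition $x\leq a\leq y\leq b\leq z$. Substituting this surviving term and using $\gd_3(x,x,x)=\gd_3(z,z,z)=1$, the sum reduces to $1\cdot g(x,y,z)\cdot 1=g(x,y,z)$. Therefore $(\gd_3\Yright g)(x,y,z)=g(x,y,z)$ for every $(x,y,z)$ and every $g$, which is exactly the assertion that $\gd_3$ is a left identity.

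Since the argument is a direct collapse of the convolution sum, there is no serious obstacle; the only point requiring care is checking that the unique surviving index $(a,b)=(x,z)$ genuinely lies in the summation range, which is guaranteed by the flag condition $x\leq y\leq z$. I would also flag that this computation gives no reason to expect $\gd_3$ to be a two-sided identity: in the product $g\Yright\gd_3$ the roles of the two arguments of $\Yright$ are interchanged, so the $\gd_3$ factors no longer pin down the inner variables $a$ and $b$ in the same way, and the sum need not collapse to $g(x,y,z)$.
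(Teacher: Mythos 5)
Your proof is correct and is essentially identical to the paper's own argument: both unwind the convolution sum, observe that the two $\gd_3$ factors force $a=x$ and $b=z$, and collapse the sum to $g(x,y,z)$. Your extra check that the surviving index lies in the summation range, and your closing remark about why the right-sided version fails, are sound elaborations of the same computation.
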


\begin{proof}
Let $f\in \JJ (\calP,R)$ and $(x,y,z)\in \Fl^3(\calP)$. Then \begin{align*} (\gd_3 \Yright f)(x,y,z)&=\sum\limits_{(a,b)\unlhd (x,y,z)}\gd_3(x,a,a)f(a,y,b)\gd_3(b,b,z)\\
& =\gd_3(x,x,x)f(x,y,z)\gd_3 (z,z,z)=f(x,y,z) .\end{align*}
\end{proof}

\begin{proposition}\label{noncom}

If $\calP$ is a non-trivial poset (it has at least two comparable elements) or that the base ring is not Boolean (not idempotent) then the multiplication $\Yright$ in $\JJ (\calP ,R)$ is non-commutative and $\gd_3$ is not a right multiplicative identity.

\end{proposition}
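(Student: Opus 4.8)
The plan is to prove first that $\gd_3$ fails to be a right multiplicative identity, and then obtain non-commutativity as an immediate formal consequence. The starting point is to compute $f\Yright\gd_3$ explicitly. Expanding the definition gives
$$(f\Yright\gd_3)(x,y,z)=\sum_{(a,b)\unlhd(x,y,z)}f(x,a,a)\,\gd_3(a,y,b)\,f(b,b,z),$$
and since $\gd_3(a,y,b)$ vanishes unless $a=y=b$, the only surviving summand is the one with $a=b=y$, which is always admissible because $x\leq y\leq z$ forces $x\leq y\leq y\leq y\leq z$. Hence
$$(f\Yright\gd_3)(x,y,z)=f(x,y,y)\,f(y,y,z).$$
So $\gd_3$ is a right identity exactly when $f(x,y,y)\,f(y,y,z)=f(x,y,z)$ for every $f$ and every flag $(x,y,z)$, and the problem reduces to exhibiting an $f$ and a triple where this equality fails.

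Next I would split according to the two hypotheses. If $\calP$ has two distinct comparable elements $p<q$, I take $f$ to be the indicator function of the single flag $(p,p,q)$, so that $f(p,p,q)=1$ while $f(p,p,p)=0$. The displayed formula then gives $(f\Yright\gd_3)(p,p,q)=f(p,p,p)\,f(p,p,q)=0\neq 1=f(p,p,q)$, so $f\Yright\gd_3\neq f$. If instead $R$ is not Boolean, choose $r\in R$ with $r^2\neq r$ and any element $p\in\calP$; taking $f$ with $f(p,p,p)=r$ yields $(f\Yright\gd_3)(p,p,p)=r^2\neq r=f(p,p,p)$. In either case $\gd_3$ is not a right multiplicative identity.

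Finally, non-commutativity follows with no further work. If $\Yright$ were commutative then, combined with the fact that $\gd_3$ is a left identity (Proposition \ref{ident}), we would have $f\Yright\gd_3=\gd_3\Yright f=f$ for every $f$, making $\gd_3$ a two-sided identity and contradicting the preceding paragraph. Hence $\Yright$ is non-commutative.

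I do not expect a serious obstacle here. The only point demanding a little care is the bookkeeping in the convolution when one factor is $\gd_3$, namely checking that the constraint $a=y=b$ is compatible with $x\leq a\leq y\leq b\leq z$ so that exactly one term survives; everything after that is a direct comparison, and the non-commutativity is a soft consequence of possessing a one-sided identity that is not two-sided.
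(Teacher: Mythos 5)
Your proposal is correct and follows essentially the same route as the paper: compute $(f\Yright\gd_3)(x,y,z)=f(x,y,y)f(y,y,z)$, compare with $(\gd_3\Yright f)(x,y,z)=f(x,y,z)$ from Proposition \ref{ident}, and exhibit an $f$ for which these differ. The only difference is that you spell out the witness functions explicitly in each case, where the paper merely asserts their existence.
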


\begin{proof}

Let $(x,y,z)\in \Fl^3(\calP)$ and suppose that either $x<y$ or that $y<z$ in $\calP$ or that $R$ is not Boolean. Under these assumptions we can construct a function $f\in \JJ (\calP, R)$ that has $f(x,y,z)\neq f(x,y,y)f(y,y,z)$. Then from Proposition \ref{ident} we have $(\gd_3\Yright f)(x,y,z)=f(x,y,z)$ but $(f\Yright \gd_3)(x,y,z)=f(x,y,y)f(y,y,z)$. \end{proof}

The proof for the next fact is very similar.

\begin{proposition}\label{nonassoc}

If $\calP$ is a poset with three elements $x,y,z$ satisfying $x<y<z$ or that the base ring is not Boolean (not idempotent) then the multiplication $\Yright$ in $\JJ (\calP ,R)$ is non-associative.

\end{proposition}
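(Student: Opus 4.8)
The plan is to produce an explicit counterexample witnessing non-associativity, mirroring the structure of the proof of Proposition~\ref{noncom}. The key observation is that associativity of $\Yright$ would force, for every $f,g,h \in \JJ(\calP,R)$ and every $(x,y,z) \in \Fl^3(\calP)$, the equality $((f\Yright g)\Yright h)(x,y,z) = (f\Yright(g\Yright h))(x,y,z)$. So I would fix a triple $x<y<z$ in $\calP$ (whose existence is exactly the hypothesis, in the poset case) and evaluate both iterated products at the single point $(x,y,z)$, choosing the three functions to be as simple as possible so that almost all terms in the expanded double sums vanish.

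First I would write out the two triple products as double sums over the index sets $(a,b)\unlhd(x,y,z)$ and then, for each such $(a,b)$, a second nested sum coming from the inner product. Concretely, $((f\Yright g)\Yright h)(x,y,z) = \sum_{(a,b)\unlhd(x,y,z)} (f\Yright g)(x,a,a)\, h(a,y,b)\, (f\Yright g)(b,b,z)$, and each factor $(f\Yright g)$ expands into its own sum; similarly for the other association. Rather than manage the full combinatorial expansion, I would choose $f = g = h = \gz_3$ when $R$ is non-Boolean, or failing that a single indicator-type function supported on one flag, so that the two sides collapse to short expressions. The goal is to exhibit a disagreement between a count (or a ring element) arising on the left and the one arising on the right: the asymmetry in the definition of $\Yright$, where $f$ appears on the two outer slots and $g$ in the middle, means that re-associating permutes which function lands in which structural role, and this should produce genuinely different sums over a three-element chain.

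The main obstacle I anticipate is bookkeeping rather than conceptual: the nested sums over $x\leq a\leq y\leq b\leq z$ with the inner convolutions have many terms, and I must pick the test functions and the evaluation point carefully so that the surviving terms are few and the inequality is transparent. The remark preceding these propositions already flags that the $R$-action interacts with $\Yright$ in a degree-doubling way ($(r\cdot f)\Yright g = r^2\cdot(f\Yright g)$), which is precisely the source of the non-Boolean branch of the hypothesis; I would exploit that same quadratic-versus-higher-degree mismatch, since re-associating three copies changes how many times a scalar gets squared. For the poset branch, the chain $x<y<z$ supplies enough distinct intermediate indices $a,b$ that the two bracketings sum over structurally different index patterns.

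Concretely, since the authors say ``the proof is very similar'' to that of Proposition~\ref{noncom}, I expect the intended argument is: assume the stated hypothesis, build a function $f$ (taking $g=h=f$, or $g=h=\gd_3$) whose values violate the multiplicativity identity $f(x,y,z) = f(x,y,y)f(y,y,z)$ that associativity would impose, and conclude that $(f\Yright f)\Yright f$ and $f\Yright(f\Yright f)$ disagree at $(x,y,z)$. The essential point is that both non-commutativity and non-associativity stem from the same failure of the outer-slot function to factor multiplicatively, so the counterexample from Proposition~\ref{noncom} can be recycled almost verbatim, with the three-element chain $x<y<z$ guaranteeing that the re-association actually changes the nested index ranges.
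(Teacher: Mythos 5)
There is a genuine gap, even though your template is the right one (and is the paper's): take $g=h=\gd_3$, evaluate both bracketings at a single flag $(x,y,z)$, and let the $\gd_3(a,y,b)$ factors collapse the sums. The problem is that the concrete condition you propose to impose on $f$ is the wrong one. With $g=h=\gd_3$ one gets
\begin{gather*}
\bigl((f\Yright \gd_3)\Yright \gd_3\bigr)(x,y,z)=f(x,y,y)\,f(y,y,y)^2\,f(y,y,z),\\
\bigl(f\Yright(\gd_3\Yright\gd_3)\bigr)(x,y,z)=(f\Yright\gd_3)(x,y,z)=f(x,y,y)\,f(y,y,z),
\end{gather*}
since $\gd_3\Yright\gd_3=\gd_3$ and $\gd_3$ is a left identity. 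The value $f(x,y,z)$ appears on neither side, so a function violating the identity $f(x,y,z)=f(x,y,y)f(y,y,z)$ from Proposition~\ref{noncom} need not witness non-associativity: take $f(x,y,y)=f(y,y,y)=f(y,y,z)=1$ and $f(x,y,z)=0$; this violates your identity, yet both bracketings equal $1$. The inequality you must actually arrange is $f(x,y,y)f(y,y,y)^2f(y,y,z)\neq f(x,y,y)f(y,y,z)$ --- exactly the hypothesis on $f$ in the paper's proof --- and the essential new ingredient relative to Proposition~\ref{noncom} is the squared middle value $f(y,y,y)^2$, i.e.\ precisely the degree-doubling you mention in passing but never connect to the computation. (Under $x<y<z$ the flags $(x,y,y)$, $(y,y,y)$, $(y,y,z)$ are distinct, so one may set $f(y,y,y)=0$ and the other two values to $1$.) So the counterexample from Proposition~\ref{noncom} cannot be ``recycled almost verbatim''; a different condition on $f$ is required.

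Your fallback witnesses are also not safe. The choice $f=g=h=\gz_3$ provably never works: at $(x,y,z)$ one bracketing equals $\sum_{x\leq a\leq y}|[x,a]|\cdot\sum_{y\leq b\leq z}|[b,z]|$ and the other equals $\sum_{x\leq a\leq y}|[a,y]|\cdot\sum_{y\leq b\leq z}|[y,b]|$, and each of the four sums counts the $2$-flags of the corresponding interval, so both sides equal $|\Fl^2([x,y])|\cdot|\Fl^2([y,z])|$; thus $\gz_3$ associates with itself on every locally finite poset. The variant $(f\Yright f)\Yright f$ versus $f\Yright(f\Yright f)$ does not collapse the way the $\gd_3$ computation does, and you do not carry out the double sums. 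In short: correct strategy, but the inequality the test function must violate is misidentified, and each explicit candidate you offer is either provably inadequate or left uncomputed.
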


\begin{proof}

Let $(x,y,z)\in \Fl^3(\calP)$ be three elements satisfying $x<y<z$ in $\calP$ or that $R$ is not Boolean. Under these assumptions we can construct a function $f\in \JJ (\calP, R)$ that has $f(x,y,y)f(y,y,y)^2f(y,y,z)\neq f(x,y,y)f(y,y,z)$. Compute \begin{align*}((f\Yright \gd_3)\Yright \gd_3))(x,y,z)&= \sum\limits_{(a,b)\unlhd (x,y,z)}(f\Yright \gd_3)(x,a,a)\gd_3(a,y,b)(f\Yright\gd_3)(b,b,z)\\
&=[(f\Yright \gd_3)(x,y,y)][(f\Yright \gd_3)(y,y,z)]\\
&= [f(x,y,y)f(y,y,y)][f(y,y,y)f(y,y,z)] .\end{align*}
 Then from Proposition \ref{ident} we have $(f\Yright (\gd_3\Yright \gd_3 ))(x,y,z)=(f\Yright \gd_3)(x,y,z)= f(x,y,y)f(y,y,z)$ which is different from $((f\Yright \gd_3)\Yright \gd_3))(x,y,z)$ by our assumption on $f$.\end{proof}
 
\begin{proposition}\label{leftdist}

If $\calP$ is any poset then the multiplication $\Yright$ in $\JJ (\calP ,R)$ is left distributive.

\end{proposition}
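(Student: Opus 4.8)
The plan is to verify left distributivity directly from the definition of $\Yright$, exploiting the fact that in the formula for $f\Yright g$ the left-hand factor $g$ occupies only the single middle slot $g(a,y,b)$, whereas the two outer factors depend on $f$ alone. First I would fix arbitrary $f,g,h\in \JJ(\calP,R)$ together with a flag $(x,y,z)\in\Fl^3(\calP)$, and expand $\bigl(f\Yright(g+h)\bigr)(x,y,z)$ using the definition of the product as a sum over all pairs $(a,b)\unlhd(x,y,z)$.

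Next I would substitute $(g+h)(a,y,b)=g(a,y,b)+h(a,y,b)$ into each summand and invoke the distributive law of the commutative ring $R$ to rewrite the term $f(x,a,a)\,(g+h)(a,y,b)\,f(b,b,z)$ as $f(x,a,a)\,g(a,y,b)\,f(b,b,z)+f(x,a,a)\,h(a,y,b)\,f(b,b,z)$. Since these are just products in $R$, the manipulation is purely formal. I would then split the single sum over $(a,b)\unlhd(x,y,z)$ into two sums ranging over the same index set and recognize them as $(f\Yright g)(x,y,z)$ and $(f\Yright h)(x,y,z)$. As $(x,y,z)$ was arbitrary, this yields $f\Yright(g+h)=(f\Yright g)+(f\Yright h)$.

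There is no genuine obstacle here; the computation is routine, and the only point worth flagging is the asymmetry that makes left distributivity hold while right distributivity fails. The statement works on the left precisely because the summed argument appears linearly in exactly one slot. For a right-distributive law one would feed $f+g$ into the two \emph{outer} slots, where it occurs twice, so expanding $(f+g)(x,a,a)\cdots(f+g)(b,b,z)$ produces cross terms and does not collapse to $(f\Yright h)+(g\Yright h)$; this is the same quadratic phenomenon recorded in the Remark via $(r\cdot f)\Yright g=r^2\cdot(f\Yright g)$. I would not carry out the right-distributive computation in the proof, but I would keep it in mind as the conceptual reason the statement is one-sided.
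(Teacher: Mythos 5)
Your proof is correct and is essentially identical to the paper's: both expand the definition of $\Yright$, use $(g+h)(a,y,b)=g(a,y,b)+h(a,y,b)$ together with distributivity in $R$, and split the sum over $(a,b)\unlhd(x,y,z)$ into the two convolutions. The extra remark about why the argument is one-sided is accurate but not needed for the statement.
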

 
 \begin{proof}
 
 Let $f,g,h\in \JJ(\calP, R)$ and $(x,y,z)\in \Fl^3(\calP)$. Then \begin{align*} (f\Yright (g+h))(x,y,z) &= \sum\limits_{(a,b)\unlhd (x,y,z)}f(x,a,a)(g+h)(a,y,b)f(b,b,z)\\
 &=  \sum\limits_{(a,b)\unlhd (x,y,z)}f(x,a,a)(g(a,y,b)+h(a,y,b))f(b,b,z)\\
 &= \sum\limits_{(a,b)\unlhd (x,y,z)}f(x,a,a)g(a,y,b)f(b,b,z) \\
 & \  \ \ \ +\sum\limits_{(a,b)\unlhd (x,y,z)}f(x,a,a)h(a,y,b)f(b,b,z)\\
 & = (f\Yright g)(x,y,z) + (f\Yright h)(x,y,z).\end{align*} \end{proof}
 
 \begin{proposition}\label{nonrightdist}

If $\calP$ is a non-trivial poset (it has at least two comparable elements) and $R$ is any non-trivial commutative ring then the multiplication $\Yright$ in $\JJ (\calP ,R)$ is not right distributive.

\end{proposition}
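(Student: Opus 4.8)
The plan is to disprove right distributivity by exhibiting explicit witnesses $f,g,h\in\JJ(\calP,R)$ and a single flag $(x,y,z)\in\Fl^3(\calP)$ at which $(f+g)\Yright h$ differs from $(f\Yright h)+(g\Yright h)$. The conceptual reason this must be possible is precisely the phenomenon recorded in the remark following the definition of $\JJ(\calP,R)$: the left argument of $\Yright$ occupies \emph{both} outer slots, so $(f+g)\Yright h$ is quadratic in $f+g$ rather than linear, and the cross terms have no reason to cancel.

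The first step is to expand the left-hand side and isolate the obstruction. Writing
$$((f+g)\Yright h)(x,y,z)=\sum_{(a,b)\unlhd(x,y,z)}(f+g)(x,a,a)\,h(a,y,b)\,(f+g)(b,b,z),$$
the product of the two outer factors breaks into four summands. Two of them reassemble into $(f\Yright h)(x,y,z)$ and $(g\Yright h)(x,y,z)$, so the defect is exactly the sum of the two cross terms,
$$\big((f+g)\Yright h-f\Yright h-g\Yright h\big)(x,y,z)=\sum_{(a,b)\unlhd(x,y,z)}\big[f(x,a,a)g(b,b,z)+g(x,a,a)f(b,b,z)\big]h(a,y,b),$$
and it suffices to make this one expression nonzero.

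Next I would use the two hypotheses to choose the data. Since $\calP$ is non-trivial, fix distinct comparable elements $u<v$ and take the flag $(x,y,z)=(u,v,v)$. Setting $h=\gd_3$ collapses the sum: the factor $\gd_3(a,v,b)$ forces $a=v=b$, so in the range $u\le a\le v\le b\le v$ only the term $a=b=v$ survives, leaving the single cross term $f(u,v,v)g(v,v,v)+g(u,v,v)f(v,v,v)$. Finally, using that $R$ is non-trivial ($1\neq 0$), let $f$ be the function equal to $1$ on $(u,v,v)$ and $0$ on every other flag, and let $g$ be the function equal to $1$ on $(v,v,v)$ and $0$ elsewhere. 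Then this cross term equals $1\cdot 1+0\cdot 0=1\neq 0$, while a direct check gives $(f\Yright\gd_3)(u,v,v)=(g\Yright\gd_3)(u,v,v)=0$. Hence $((f+g)\Yright\gd_3)(u,v,v)=1\neq 0=((f\Yright\gd_3)+(g\Yright\gd_3))(u,v,v)$, which is the desired failure.

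There is no serious obstacle in this argument; it is a direct construction mirroring the style of Propositions \ref{noncom} and \ref{nonassoc}. The only points requiring care are the bookkeeping that separates the two cross terms from the reassembled products $f\Yright h$ and $g\Yright h$, and the verification that under $h=\gd_3$ no term other than $a=b=v$ contributes. I would therefore double-check that $(u,v,v)$ is a legitimate element of $\Fl^3(\calP)$ (it is, as $u\le v\le v$) and that the chosen $f,g$ make the isolated cross term genuinely nonzero in $R$, which is guaranteed by non-triviality of the ring.
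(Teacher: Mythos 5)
Your proposal is correct and follows essentially the same route as the paper's own proof: both right-multiply by $\gd_3$ to collapse the convolution sum to a single term and then exhibit a nonvanishing cross term coming from the quadratic dependence of $\Yright$ on its left argument (the paper uses the pair $f,\gz_3$ with $f(x,y,y)+f(y,y,z)\neq 0$ where you use two indicator functions). The difference is only in the choice of explicit witnesses.
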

 
 \begin{proof} Let $(x,y,z)\in \Fl^3(\calP)$ and $f\in \JJ (\calP ,R)$ be any function such that $f(x,y,y)+f(y,y,z)\neq 0$. Then \begin{align*}((f+\gz_3)\Yright \gd_3)(x,y,z)&=\sum\limits_{(a,b)\unlhd (x,y,z)}(f+\gz_3)(x,a,a)\gd_3(a,y,b)(f+\gz_3)(b,b,z)\\
 &=[(f+\gz_3)(x,y,y)][(f+\gz_3)(y,y,z)]\\
 & = f(x,y,y)f(y,y,z)+f(x,y,y)+f(y,y,z)+1.\end{align*} On the other hand we have \begin{align*}((f\Yright \gd_3)+(\gz_3\Yright \gd_3))(x,y,z)&=f(x,y,y)f(y,y,z)+\gz_3(x,y,y)\gz_3(y,y,z)\\
 & = f(x,y,y)f(y,y,z) +1 \end{align*} which by the hypothesis on $f$ we have the right distributive property not holding. \end{proof}
 
 With Propositions \ref{ident}, \ref{noncom}, \ref{nonassoc}, \ref{leftdist}, and \ref{nonrightdist} we conclude that $J(\calP,R)$ is a left only unital, non-commutative, non-associative, near-ring (see  \cite{PILZ-BOOK} for this terminology). Also, note that there is the zero function $Z\in\JJ (\calP,R)$ which satisfies $Z\Yright f=f\Yright Z=Z$ for all $f\in \JJ(\calP,R)$. Further note that addition in $\JJ(\calP,R)$ is by abelian. Hence $\JJ(\calP,R)$ is an abelian, zero-symmetric, left only unital, non-commutative, non-associative, near-ring. It is worth noting that in general $\JJ(\calP,R)$ is not even close to being associative on both sides and is not an alternative algebra or any similar generalization.
 
Now we look at a few special cases that do not satisfy the hypothesis of some of these propositions.
 
\begin{example}

Let $\calP=B_0=\{0\}$ be the poset with just one element and $R$ any commutative ring. Then as a set $\JJ (B_0,R) =R$, but multiplication is given by $a\Yright b=aba=a^2b$. If $R$ is Boolean then $\JJ(B_0,R)\cong R$.  Otherwise, this near-ring is not associative, not commutative, and is only left unital. 
\end{example} 

\begin{example}

Let $\calP=B_1=\{0,1\}$ be the Boolean poset of rank 1 and $R$ be any Boolean ring (one example would be $\F_2$). Then the hypothesis of Proposition \ref{nonassoc} is not satisfied and the non-equality $f(x,y,y)f(y,y,y)^2f(y,y,z)\neq f(x,y,y)f(y,y,z)$ used in the proof is always equal. It turns out that in this case $\JJ(B_1,R)$ is associative and we prove this now. In order to shorten the calculation we will denote $(0,0,0)$ by $\vec{0}$ and $(1,1,1)$ by $\vec{1}$. First we see that $$((f\Yright g)\Yright h)(\vec{0})=f(\vec{0})g(\vec{0})h(\vec{0})=(f\Yright (g\Yright h))(\vec{0}).$$

Then for the non-trivial tuple $(0,0,1)$ we compute
\begin{align*}
((f\Yright g)\Yright h)(0,0,1) = &(f\Yright g)(\vec{0})h(\vec{0})(f\Yright g)(0,0,1) \\
&+(f\Yright g)(\vec{0})h(0,0,1)(f\Yright g)(\vec{1})\\
=&  f(\vec{0})g(\vec{0})h(\vec{0})[f(\vec{0})g(\vec{0})f(0,0,1)+f(\vec{0})g(0,0,1)f(\vec{1})] \\
& +f(\vec{0})g(\vec{0})h(0,0,1)f(\vec{1})g(\vec{1})\\
=& f(\vec{0})g(\vec{0})h(\vec{0})f(0,0,1) +  f(\vec{0})g(\vec{0})h(\vec{0})g(0,0,1)f(\vec{1}) \\
&+f(\vec{0})g(\vec{0})h(0,0,1)f(\vec{1})g(\vec{1}) . 
\end{align*}

Then the other side of the associative identity is

\begin{align*}
(f\Yright (g\Yright h))(0,0,1) = &f(\vec{0})(g\Yright h)(\vec{0})f(0,0,1) +f(\vec{0})\big[(g\Yright h)(0,0,1)\big] f(\vec{1})\\
=&   f(\vec{0})g(\vec{0})h(\vec{0})f(0,0,1) + f(\vec{0})\big[g(\vec{0})h(\vec{0})g(0,0,1) \\
& + g(\vec{0})h(0,0,1)g(\vec{1})\big] f(\vec{1})\\
=&((f\Yright g)\Yright h)(0,0,1) .
\end{align*}
Hence $\JJ (B_1, R)$ is associative. This example does satisfy the hypothesis of Proposition \ref{nonrightdist}. Hence $\JJ(B_1,R)$ is a (associative) left abelian (addition is commutative) near-ring. That's about as good as it gets though. For example, if $R=\F_2$ then $\JJ (B_1,\F_2)$ is not a near-field because any function with $f(\vec{0})=0$ and $f(0,0,1)=1$ does not have an inverse. For exactly the same reason $\gd_3\in \JJ (B_1,\F_2)$ is still not a right identity element.

\end{example}

\section{Operations on incidence functions}\label{oper-sec}

In this section we look at a relationship between the classical incidence algebra $\II (\calP,R)$ and $\mathbb{J}(\calP,R)$. For $f,g \in \II(\calP, R)$ we define $f\dia g \in \JJ (\calP, R)$ by setting $$(f\dia g)(x,y,z)=f(x,y)g(y,z).$$ We can use the $\dia$ operation to construct interesting elements in $\JJ(\calP,R)$. There are relationships between the operations $*$ in $\II(\calP,R)$, $\Yright$ in $\JJ(\calP,R)$, and $\dia$.

\begin{proposition}\label{almosthom}

If $f,g,r,s\in I(\calP,R)$ and $f(b,b)g(a,a)=1$ for all $a,b\in \calP$ then $$(f\dia g)\Yright (r\dia s) =(f*r)\dia (s*g).$$ 

\end{proposition}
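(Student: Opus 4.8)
The plan is to prove the identity $(f\dia g)\Yright (r\dia s) =(f*r)\dia (s*g)$ by direct computation, expanding both sides using the definitions of $\Yright$, $\dia$, and $*$, and then matching terms via the hypothesis $f(b,b)g(a,a)=1$. First I would write out the left-hand side at an arbitrary triple $(x,y,z)\in\Fl^3(\calP)$. By the definition of $\Yright$ we have
\begin{align*}
\big((f\dia g)\Yright (r\dia s)\big)(x,y,z)=\sum_{(a,b)\unlhd(x,y,z)}(f\dia g)(x,a,a)\,(r\dia s)(a,y,b)\,(f\dia g)(b,b,z).
\end{align*}
Applying the definition of $\dia$ to each of the three factors, I would rewrite this as a sum over the summation range $x\leq a\leq y\leq b\leq z$ of the product $f(x,a)g(a,a)\cdot r(a,y)s(y,b)\cdot f(b,b)g(b,z)$.

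The key step is to exploit the hypothesis. By assumption $g(a,a)=1$ and $f(b,b)=1$ for all $a,b$ (these follow immediately from $f(b,b)g(a,a)=1$ holding for \emph{all} $a,b$, since one may fix $a$ and vary $b$ and vice versa, or simply observe the identity must then be $1=1$ with each diagonal factor a unit — I would note the cleanest reading is that each diagonal value is the multiplicative identity). Substituting these in collapses the summand to $f(x,a)\,r(a,y)\,s(y,b)\,g(b,z)$. Now the crucial structural observation is that the summation over the pair $(a,b)$ with $x\leq a\leq y\leq b\leq z$ \emph{factors}: the variable $a$ ranges over $x\leq a\leq y$ and interacts only with $f$ and $r$, while $b$ ranges over $y\leq b\leq z$ and interacts only with $s$ and $g$. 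Hence the double sum splits as a product of two independent single sums,
\begin{align*}
\left(\sum_{x\leq a\leq y}f(x,a)r(a,y)\right)\left(\sum_{y\leq b\leq z}s(y,b)g(b,z)\right).
\end{align*}

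Finally I would recognize each factor as a convolution in $\II(\calP,R)$: the first sum is exactly $(f*r)(x,y)$ and the second is $(s*g)(y,z)$. Therefore the whole expression equals $(f*r)(x,y)\cdot(s*g)(y,z)$, which by the definition of $\dia$ is precisely $\big((f*r)\dia(s*g)\big)(x,y,z)$, completing the proof. I do not expect any serious obstacle here, since the argument is a formal manipulation; the only point requiring care is verifying that the summation index genuinely decouples — that is, that the middle element $y$ is fixed (it is one of the three input variables and not summed over), so that the ranges of $a$ and $b$ are independent once $y$ is held constant. The hypothesis on the diagonal values is exactly what is needed to eliminate the cross-term factors $g(a,a)$ and $f(b,b)$ that would otherwise obstruct the clean factorization.
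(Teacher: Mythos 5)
Your proof follows exactly the same route as the paper's: expand $\Yright$, apply the definition of $\dia$, cancel the diagonal factors using the hypothesis, split the decoupled double sum into a product of two convolutions, and reassemble with $\dia$. One caveat: your parenthetical claim that $f(b,b)=1$ and $g(a,a)=1$ \emph{individually} does not follow from the hypothesis. Fixing $a$ and varying $b$ only shows that $f$ is constant on the diagonal and that this constant is the inverse of the (likewise constant) diagonal value of $g$; for instance $f(b,b)=g(a,a)=-1$ for all $a,b$ satisfies $f(b,b)g(a,a)=1$ without either factor being $1$. This does not damage the proof, because the summand contains the product $g(a,a)\,f(b,b)$ for the same pair $(a,b)$ appearing in the sum, and that product equals $1$ directly by hypothesis --- which is precisely how the paper justifies the step. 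You should drop the claim about the individual values and cite the hypothesis for the product alone.
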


\begin{proof}

Let $(x,y,z)\in \Fl^3(\calP)$ and $f,g,r,s\in I(\calP,R)$. Then

\begin{align*}
((f\dia g)\Yright (r\dia s))(x,y,z)=& \sum\limits_{(a,b)\unlhd (x,y,z)}(f\dia g)(x,a,a)(r\dia s)(a,y,b)(f\dia g)(b,b,z)\\
=&\sum\limits_{(a,b)\unlhd (x,y,z)}f(x,a)g(a,a)r(a,y)s(y,b)f(b,b)g(b,z)\\
=& \left[ \sum\limits_{x\leq a\leq y} f(x,a)r(a,y)\right] \left[  \sum\limits_{y\leq b\leq z} s(y,b)g(b,z)\right] \\
=& \left[ (f*r)(x,y)\right]\left[ (s*g)(y,z)\right]\\
=& ((f*r)\dia (s*g))(x,y,z)
\end{align*} where the third equality only holds due the the assumption.\end{proof}

One can see from the proof that without the hypothesis on $f$ and $g$ that the equality will not hold. Hence there is no hope for this to give any kind of near-ring homomorphism from a twisted product version of $\II (\calP,R)\times \II (\calP,R)$. Also, the natural addition homomorphism assumption does not hold. Instead we have the following proposition which does not have special hypothesis on the functions. For this proposition there are two different additions, for $\II(\calP,R)$ and $\JJ (\calP,R)$, which for brevity we use the same addition symbol.

\begin{proposition}\label{addhom}

If $f,g,r,s\in I(\calP,R)$ then $$(f + g)\dia (r + s) =(f\dia r)+ (f\dia s) + (g \dia r) + (g \dia s).$$ 

\end{proposition}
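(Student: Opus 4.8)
The plan is to prove this by direct expansion from the definition of $\dia$ together with the bilinearity of multiplication in the ring $R$. The statement is essentially the claim that the binary operation $\dia\colon \II(\calP,R)\times \II(\calP,R)\to \JJ(\calP,R)$ is bilinear, and in particular distributes over addition in each of its two arguments simultaneously. Since $\dia$ is defined pointwise by $(f\dia g)(x,y,z)=f(x,y)g(y,z)$, everything reduces to a pointwise identity in the commutative ring $R$, where no subtleties arise.

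First I would fix an arbitrary partial flag $(x,y,z)\in\Fl^3(\calP)$ and evaluate the left-hand side at this triple, unwinding the definition of $\dia$ to obtain $((f+g)\dia(r+s))(x,y,z)=(f+g)(x,y)\,(r+s)(y,z)$. Next I would use the definition of addition in $\II(\calP,R)$ to rewrite each factor as $(f(x,y)+g(x,y))(r(y,z)+s(y,z))$. At this point the entire computation takes place inside $R$, so I would invoke distributivity in $R$ to expand the product of the two binomials into the four terms $f(x,y)r(y,z)+f(x,y)s(y,z)+g(x,y)r(y,z)+g(x,y)s(y,z)$. Finally I would recognize each summand as the evaluation of one of the four $\dia$-products at $(x,y,z)$, namely $(f\dia r)(x,y,z)$, $(f\dia s)(x,y,z)$, $(g\dia r)(x,y,z)$, and $(g\dia s)(x,y,z)$, and reassemble using the definition of addition in $\JJ(\calP,R)$ to conclude that the two sides agree pointwise, hence as functions.

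There is essentially no obstacle here: the result is a bookkeeping verification that contains none of the difficulties encountered in the earlier propositions, precisely because $\dia$ involves no convolution sum over intermediate elements of the poset — it simply splits a three-variable flag $(x,y,z)$ into the two overlapping two-variable flags $(x,y)$ and $(y,z)$. The absence of a summation is what guarantees the clean bilinear behavior, in contrast with the $\Yright$ product, whose summation and repeated appearance of the first argument $f$ are exactly what caused the failure of right distributivity in Proposition \ref{nonrightdist} and the non-homomorphism property noted after Proposition \ref{almosthom}. I would therefore present the proof as a single short display chaining the four equalities described above, with a one-line justification that the middle step is just distributivity in the commutative ring $R$.
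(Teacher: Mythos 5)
Your proposal is correct and follows exactly the paper's argument: fix $(x,y,z)\in\Fl^3(\calP)$, expand $(f+g)(x,y)(r+s)(y,z)$ by distributivity in $R$, and identify the four resulting terms with the four $\dia$-products. Nothing is missing.
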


\begin{proof}

For all $(x,y,z)\in \Fl^3(\calP)$ 

\begin{align*}
((f + g)\dia (r + s))(x,y,z)=& (f(x,y)+g(x,y))(r(y,z)+s(y,z))\\
=&f(x,y)r(y,z)+f(x,y)s(y,z)+g(x,y)r(y,z)+g(x,y)s(y,z)\\
=& ((f\dia r)+(f\dia s)+(g\dia r)+(g\dia s))(x,y,z) 
\end{align*} which is the identity we are looking for.\end{proof}

Next we show how the $\dia$ operation works over products of posets.

\begin{proposition}\label{dia-prod}

If $\calP$ and $\mathcal{Q}$ are locally finite posets, $f_\calP,g_\calP\in \II(\calP ,R)$, and $r_\Q,s_\Q\in \II(\mathcal{Q},R)$ then $$(f_\calP\dia g_\calP)\times (r_\Q\dia s_\Q)=(f_\calP\times r_\Q) \dia (g_\calP\times s_Q).$$

\end{proposition}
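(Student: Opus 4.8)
The plan is to verify the identity by directly evaluating both sides on an arbitrary element of $\Fl^3(\calP\times\Q)$, reducing everything to the definitions of $\dia$ and $\times$. An element of $\Fl^3(\calP\times\Q)$ is a triple $((x_1,x_2),(y_1,y_2),(z_1,z_2))$ where each coordinate pair is comparable in the respective poset, so $x_1\leq y_1\leq z_1$ in $\calP$ and $x_2\leq y_2\leq z_2$ in $\Q$. Both sides of the claimed equation live in $\JJ(\calP\times\Q,R)$, so it suffices to show they agree on every such triple.

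First I would expand the left-hand side. By the definition of $\times$ for functions in $\JJ$ (the three-variable analogue of the product operation coming from Proposition \ref{inc-prod}), evaluating $(f_\calP\dia g_\calP)\times(r_\Q\dia s_\Q)$ on the triple factors as a product of the $\calP$-component applied to $((x_1,y_1,z_1))$ and the $\Q$-component applied to $((x_2,y_2,z_2))$. Then the definition of $\dia$ turns each factor into a product of two incidence-algebra values: the $\calP$-factor becomes $f_\calP(x_1,y_1)\,g_\calP(y_1,z_1)$ and the $\Q$-factor becomes $r_\Q(x_2,y_2)\,s_\Q(y_2,z_2)$. So the left-hand side collapses to the scalar product
$$
f_\calP(x_1,y_1)\,g_\calP(y_1,z_1)\,r_\Q(x_2,y_2)\,s_\Q(y_2,z_2).
$$

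Next I would expand the right-hand side. Here one first applies $\dia$ on the product poset $\calP\times\Q$, writing $\big((f_\calP\times r_\Q)\dia(g_\calP\times s_\Q)\big)$ evaluated on the triple as
$$
(f_\calP\times r_\Q)\big((x_1,x_2),(y_1,y_2)\big)\cdot(g_\calP\times s_\Q)\big((y_1,y_2),(z_1,z_2)\big),
$$
and then the definition of $\times$ for incidence functions (from Proposition \ref{inc-prod}) splits each factor across the two posets, giving $f_\calP(x_1,y_1)\,r_\Q(x_2,y_2)$ and $g_\calP(y_1,z_1)\,s_\Q(y_2,z_2)$ respectively. The result is the same four-fold scalar product as before, since $R$ is commutative and the factors may be freely reordered.

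The two sides therefore coincide termwise, and the identity holds. There is no real obstacle here: the statement is a bookkeeping identity relating two different orders in which one can split a three-variable function across a poset product, and the commutativity of $R$ is the only ingredient needed to match the factors. The one point requiring mild care is keeping the coordinate indices straight—tracking which argument lives in $\calP$ versus $\Q$ as the $\times$ operation interleaves them—but once both sides are fully expanded into the monomial $f_\calP(x_1,y_1)\,g_\calP(y_1,z_1)\,r_\Q(x_2,y_2)\,s_\Q(y_2,z_2)$, the equality is immediate.
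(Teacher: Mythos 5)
Your proof is correct and follows essentially the same route as the paper's: evaluate both sides on an arbitrary triple in $\Fl^3(\calP\times\Q)$, unfold the definitions of $\dia$ and $\times$, and observe that both sides reduce to the monomial $f_\calP(x_1,y_1)\,g_\calP(y_1,z_1)\,r_\Q(x_2,y_2)\,s_\Q(y_2,z_2)$ after reordering factors by commutativity of $R$. No gaps; this matches the paper's argument step for step.
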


\begin{proof}
Let $((x_1,x_2),(y_1,y_2),(z_1,z_2))\in \Fl^3(\calP\times \mathcal{Q})$. Then $$((f\dia g)\times (r\dia s)) ((x_1,x_2),(y_1,y_2),(z_1,z_2))$$ 
$$= \left[(f\dia g)(x_1,y_1,z_1)\right] \left[ (r\dia s)(x_2,y_2,z_2)\right]$$
$$=\left[ f(x_1,y_1)g(y_1,z_1)\right] \left[ r(x_2,y_2)s(y_2,z_2)\right]$$
$$= \left[f(x_1,y_1)r(x_2,y_2)\right]\left[g(y_1,z_1)s(y_2,z_2)\right]$$
$$=\left[(f\times r)((x_1,y_1),(x_2,y_2))\right]\left[(g\times s)((y_1,z_1),(y_2,z_2))\right]$$
$$= ((f\times r)\dia (g\times s))((x_1,x_2),(y_1,y_2),(z_1,z_2))$$
which completes the proof.\end{proof}

We can also define products of functions on products of posets over 3-flags. We prefer to limit our study of $\JJ (\calP,R)$ to this product definition since the technicalities of tensor products over non-associative near-rings would present significant and unnecessary technicalities. 

\begin{definition}

Let $\calP$ and $\Q$ be locally finite posets, $f_\calP\in \JJ (\calP,R)$, and $g_\Q\in \JJ (\Q,R)$. Define $f_\calP\times g_\Q\in \JJ (\calP\times \Q,R)$ by $$(f_\calP\times g_\Q)((x_1,x_2),(y_1,y_2),(z_1,z_2))=f_\calP (x_1,y_1,z_1)g_\Q(x_2,y_2,z_2).$$

\end{definition}

Similarly to Proposition \ref{dia-prod} we get a factorization of $\times$ through $\Yright$. We use subscripts on the operations to differentiate which ring the operation occurs.

\begin{proposition}

If $\calP$ and $\Q$ be locally finite posets, $f_\calP, g_\calP\in \JJ (\calP,R)$, and $r_\Q,s_\Q\in \JJ (\Q,R)$ then $$(f_\calP \Yright_\calP g_\calP)\times (r_\Q\Yright_\Q s_\Q)=(f_\calP \times r_\Q)\Yright_{\calP\times \Q} (g_\calP \times s_\Q).$$
\end{proposition}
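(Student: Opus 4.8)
The plan is to prove the identity by direct evaluation on an arbitrary triple $((x_1,x_2),(y_1,y_2),(z_1,z_2))\in\Fl^3(\calP\times\Q)$, showing that the two sides produce identical sums. This parallels the argument for the $\dia$-operation in Proposition \ref{dia-prod}, the new feature being that $\Yright$ is defined by a sum rather than a single product, so both sides will be double sums whose index sets and summands must be matched.

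First I would expand the left-hand side. By the definition of $\times$ on $\JJ$, its value at the chosen triple factors as $(f_\calP\Yright_\calP g_\calP)(x_1,y_1,z_1)\cdot(r_\Q\Yright_\Q s_\Q)(x_2,y_2,z_2)$. Expanding each factor via the definition of $\Yright$ gives one sum over pairs $(a_1,b_1)$ with $x_1\leq a_1\leq y_1\leq b_1\leq z_1$ in $\calP$ and a second sum over pairs $(a_2,b_2)$ with $x_2\leq a_2\leq y_2\leq b_2\leq z_2$ in $\Q$; their product is a single double sum ranging over all such quadruples.

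Next I would expand the right-hand side. The crucial structural point is that a pair $(A,B)=((a_1,a_2),(b_1,b_2))$ satisfies $(A,B)\unlhd((x_1,x_2),(y_1,y_2),(z_1,z_2))$ in $\calP\times\Q$ if and only if both coordinatewise chains hold, since the product order is the coordinatewise order. Thus the index set of the product-poset convolution is exactly the Cartesian product of the two coordinate index sets from the left-hand side. Evaluating each of the three factors $f_\calP\times r_\Q$, $g_\calP\times s_\Q$, and $f_\calP\times r_\Q$ on the appropriate triples then splits each summand into a product of a $\calP$-value and a $\Q$-value, again by the definition of $\times$ on $\JJ$.

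Finally, using commutativity of $R$, I would regroup the six factors in each summand so that the three $\calP$-values assemble into the $f_\calP,g_\calP$-summand of the left-hand side and the three $\Q$-values assemble into the $r_\Q,s_\Q$-summand, giving term-by-term agreement of the two double sums. The only step needing any care---the main, though routine, obstacle---is verifying that the two index sets coincide, i.e.\ that the single chain $(A,B)\unlhd(\cdots)$ in the product poset unpacks precisely into the conjunction of the two coordinate chains; once this is confirmed, the equality follows immediately.
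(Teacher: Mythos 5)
Your proposal is correct and follows essentially the same route as the paper's proof: expand both sides on an arbitrary triple, observe that the relation $(\overline{a},\overline{b})\unlhd(\overline{x},\overline{y},\overline{z})$ in $\calP\times\Q$ splits into the two coordinatewise chains so the index set is a Cartesian product, and regroup the factors using commutativity of $R$. Nothing is missing.
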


\begin{proof}

Let $\overline{x}=(x_1,x_2),\overline{y}=(y_1,y_2), \overline{z}=(z_1,z_2)\in \Fl^3(\calP\times \Q)$ and $\overline{a}=(a_1,a_2),\overline{b}=(b_1,b_2)\in \Fl^2(\calP\times \Q)$. Then

\begin{multline*}((f_\calP \times r_\Q)\Yright_{\calP\times \Q} (g_\calP \times s_\Q)) (\overline{x},\overline{y},\overline{z})\\
=\sum\limits_{(\overline{a},\overline{b})\unlhd (\overline{x},\overline{y},\overline{z})}(f_\calP \times r_\Q)(\overline{x},\overline{a},\overline{a}) (g_\calP \times s_\Q)(\overline{a},\overline{y},\overline{b})(f_\calP \times r_\Q)(\overline{b},\overline{b},\overline{z})\\
=\sum\limits_{(a_1,b_1)}\sum\limits_{(a_2,b_2)}f_\calP(x_1,a_1,a_1)g_\calP(a_1,y_1,b_1)f_\calP(b_1,b_1,z_1)r_\Q(x_2,a_2,a_2)s_\Q(a_2,y_2,b_2)r_\Q(b_2,b_2,z_2)\\
=\left[(f_\calP \Yright g_\calP)(x_1,y_1,z_1)\right]\left[ (r_\Q\Yright s_\Q)(x_2,y_2,z_2)\right]\\
=((f_\calP \Yright_\calP g_\calP)\times (r_\Q\Yright_\Q s_\Q))(\overline{x},\overline{y},\overline{z}) \\
\end{multline*} which is the required identity. \end{proof}

%
%

\section{The J-function}\label{J-func-sec}

Let $\calP$ be a locally finite poset. In this section we define the central invariant of this note which we call the $J$ function. This function is a generalization of the classical M\"obius function $\gm$. We show that it satisfies generalizations of the classical theorems on $\gm$. A key ingredient for these results is the operation $\dia$.

\begin{definition}

Define $J: \Fl^3(\calP) \to \mathbb{Z}$ for all fixed $(x,y,z)\in \Fl^3(L)$ by $$\sum\limits_{(a,b)\unlhd (x,y,z)} J(a,y,b)=\delta_{3}(x,y,z). $$

\end{definition}

This function is well defined because either $x=y=z$ with $J(x,y,z)=1$ or otherwise one of the following summations is non-empty and all are finite
\begin{align*}
J(x,y,z) =& -\sum\limits_{x<a<y}\left[\sum\limits_{y<b<z}J(a,y,b)\right]\\
&-\sum\limits_{x<a\leq y}J(a,y,z) -\sum\limits_{y\leq b<z}J(x,y,b).
\end{align*} Note that $J$ is exactly the function in $\JJ (\calP, R)$ such that \begin{equation}\label{def-equation}
\gz_3 \Yright J =\gd_3 .\end{equation} This is a good reason why we say it is a generalization of the classical M\"obius function and below we show that there are a few more interesting reasons. It turns out that this function was actually defined before in \cite{W-flagincidence} with the notation $\gm_3^p$ and is exactly given by the $\dia$ product construction in the previous section. 

\begin{theorem}\label{decomp}

For any locally finite poset $\calP$ we have $J =\gm_3^p = \gm \dia \gm$.

\end{theorem}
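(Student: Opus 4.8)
The plan is to show that the function $\gm\dia\gm$ satisfies the defining recurrence of $J$, and then invoke uniqueness of the solution. By definition $(\gm\dia\gm)(x,y,z)=\gm(x,y)\gm(y,z)$, so I would verify directly that $\gz_3\Yright(\gm\dia\gm)=\gd_3$, matching equation \eqref{def-equation}; since $J$ is the unique function satisfying this (the recurrence determines $J$ on every flag by induction on the size of the interval), equality follows. The identity $\gz_3=\gz\dia\gz$ is immediate, so the natural route is to compute $(\gz\dia\gz)\Yright(\gm\dia\gm)$ using the machinery of Section \ref{oper-sec}.

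The key algebraic step is Proposition \ref{almosthom}, which states that $(f\dia g)\Yright(r\dia s)=(f*r)\dia(s*g)$ \emph{provided} $f(b,b)g(a,a)=1$ for all $a,b$. Here I would take $f=g=\gz$ and $r=s=\gm$; since $\gz(b,b)\gz(a,a)=1\cdot 1=1$, the hypothesis holds, and the proposition yields
\begin{equation*}
(\gz\dia\gz)\Yright(\gm\dia\gm)=(\gz*\gm)\dia(\gm*\gz)=\gd\dia\gd,
\end{equation*}
using that $\gm$ is the two-sided inverse of $\gz$, i.e. $\gz*\gm=\gm*\gz=\gd$. Finally I would observe that $(\gd\dia\gd)(x,y,z)=\gd(x,y)\gd(y,z)$, which equals $1$ exactly when $x=y$ and $y=z$, that is when $x=y=z$, and $0$ otherwise; this is precisely $\gd_3(x,y,z)$. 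Hence $\gz_3\Yright(\gm\dia\gm)=\gd_3$, so $\gm\dia\gm=J$. The equality $J=\gm_3^p$ is just a matter of recalling the definition of $\gm_3^p$ from \cite{W-flagincidence}, which by the paper's own remark coincides with the $\dia$-product, so this part is essentially bookkeeping.

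I expect the only genuine obstacle to be the uniqueness argument that licenses passing from ``$\gm\dia\gm$ solves the recurrence'' to ``$\gm\dia\gm=J$.'' One must check that the defining equation $\gz_3\Yright h=\gd_3$ has at most one solution $h$; this is the content of the well-definedness discussion preceding the theorem, where $J(x,y,z)$ is solved for in terms of strictly smaller flags. I would make this explicit by noting that the recurrence expresses $h(x,y,z)$ as $\gd_3(x,y,z)$ minus a sum of values $h(a,y,b)$ over pairs $(a,b)$ with $(a,b)\unlhd(x,y,z)$ and $(a,b)\neq(x,z)$, each lying in a proper subinterval, so an induction on the length of the interval $[x,z]$ (valid since $\calP$ is locally finite) forces any two solutions to agree. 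Everything else reduces to the clean application of Proposition \ref{almosthom} together with $\gz*\gm=\gm*\gz=\gd$.
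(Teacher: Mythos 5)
Your proposal is correct and follows essentially the same route as the paper: both apply Proposition \ref{almosthom} with $f=g=\gz$ and $r=s=\gm$ (whose hypothesis $\gz(b,b)\gz(a,a)=1$ is trivially satisfied) to get $\gz_3\Yright(\gm\dia\gm)=(\gz*\gm)\dia(\gm*\gz)=\gd\dia\gd=\gd_3$, and then conclude by uniqueness of the solution to the defining recurrence. Your explicit induction-on-interval-length justification of that uniqueness is a welcome bit of extra care that the paper leaves implicit in its well-definedness discussion.
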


\begin{proof}

This follows from Proposition \ref{almosthom} since $\gz\in \II(\calP,R)$ satisfies the hypothesis and $$\gz_3\Yright (\gm\dia \gm)=(\gz \dia \gz)\Yright (\gm\dia \gm)=(\gz *\gm)\dia (\gm*\gz)=\gd \dia \gd=\gd_3.$$ Hence $J$ and $\gm\dia \gm$ satisfy the same recursive definition. \end{proof}

Now we can use all the classical properties of $\gm$ to conclude information about $J$. We start by noticing that $J$ is also a left inverse of $\gz_3$.

\begin{corollary}\label{zeta-right}
$J\Yright \gz_3=\gd_3$.
\end{corollary}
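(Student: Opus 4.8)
The goal is to show $J\Yright \gz_3=\gd_3$, where we already know from Theorem \ref{decomp} that $J=\gm\dia\gm$ and from the defining equation \eqref{def-equation} that $J$ is the left inverse $\gz_3\Yright J=\gd_3$. The plan is to exploit the factorization of both factors as $\dia$-products and apply Proposition \ref{almosthom}, exactly in the same spirit as the proof of Theorem \ref{decomp}, but with the roles of the M\"obius and zeta functions swapped.

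First I would write $\gz_3=\gz\dia\gz$, which holds immediately from the definition of $\dia$ since $(\gz\dia\gz)(x,y,z)=\gz(x,y)\gz(y,z)=1$. Then I would compute
\begin{equation*}
J\Yright\gz_3=(\gm\dia\gm)\Yright(\gz\dia\gz).
\end{equation*}
To invoke Proposition \ref{almosthom} here, I need the outer-factor functions $\gm,\gm$ (playing the roles of $f,g$ in that proposition) to satisfy the hypothesis $f(b,b)g(a,a)=1$ for all $a,b$. Since $\gm(b,b)=\gm(a,a)=1$ for every element (the M\"obius function is $1$ on the diagonal, as $\gm(x,x)=\gd(x,x)=1$), this hypothesis is satisfied. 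Applying the proposition with $f=g=\gm$ and $r=s=\gz$ gives
\begin{equation*}
(\gm\dia\gm)\Yright(\gz\dia\gz)=(\gm*\gz)\dia(\gz*\gm)=\gd\dia\gd=\gd_3,
\end{equation*}
where the middle equality uses that $\gm$ is a two-sided inverse of $\gz$ in the classical incidence algebra, namely $\gm*\gz=\gz*\gm=\gd$, and the last equality is the identity $(\gd\dia\gd)(x,y,z)=\gd(x,y)\gd(y,z)=\gd_3(x,y,z)$.

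The one point deserving care is the bookkeeping of which argument of Proposition \ref{almosthom} goes where. In that proposition the statement is $(f\dia g)\Yright(r\dia s)=(f*r)\dia(s*g)$, so the inner factors $g$ and $s$ get their convolution order reversed relative to the outer ones. I would double-check that with $f=g=\gm$ and $r=s=\gz$ the right-hand side reads $(\gm*\gz)\dia(\gz*\gm)$, and that both convolutions collapse to $\gd$ by the classical two-sided inverse property. There is no real obstacle beyond this orientation check; the whole argument is a direct application of the already-proven product formula, and the symmetry $\gm*\gz=\gz*\gm$ is precisely what makes the left inverse also a right inverse in this setting.
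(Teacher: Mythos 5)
Your proposal is correct and follows the paper's own proof essentially verbatim: both decompose $J=\gm\dia\gm$ and $\gz_3=\gz\dia\gz$, verify that $\gm$ satisfies the diagonal hypothesis of Proposition \ref{almosthom}, and collapse $(\gm*\gz)\dia(\gz*\gm)$ to $\gd\dia\gd=\gd_3$. Your explicit check of the argument bookkeeping is a nice touch but does not change the argument.
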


\begin{proof} Since $\gm$ satisfies the hypothesis of Proposition \ref{almosthom} we get $$J\Yright \gz_3=(\gm \dia \gm)\Yright (\gz \dia \gz)= (\gm *\gz)\dia (\gz*\gm)=\gd\dia \gd=\gd_3.$$ \end{proof}

Interpreting Corollary \ref{zeta-right} in terms of the definition and sums in the ring $R$ we get the following.

\begin{corollary}\label{otherside}

For any locally finite poset $\calP$ and $(x,y,z)\in \Fl^3(\calP)$ we have $$\sum\limits_{(a,b)\unlhd (x,y,z)}J(x,a,a)J(b,b,z)=\delta_{3}(x,y,z)$$ and in particular $$\sum\limits_{(a,b)\unlhd (x,y,z)}\gm(x,a)\gm(b,z)=\delta_{3}(x,y,z).$$

\end{corollary}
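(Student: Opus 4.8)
The plan is to read both displayed identities directly off Corollary \ref{zeta-right}, which provides the operator equation $J\Yright \gz_3=\gd_3$, by simply unpacking the definition of $\Yright$ at an arbitrary triple $(x,y,z)\in\Fl^3(\calP)$. No new recursion or induction is needed; everything reduces to substituting the defining formulas and invoking the structural result already proved in Theorem \ref{decomp}.

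First I would expand the left-hand operator as $(J\Yright \gz_3)(x,y,z)=\sum_{(a,b)\unlhd(x,y,z)}J(x,a,a)\,\gz_3(a,y,b)\,J(b,b,z)$. Because $\gz_3$ is identically $1$, the middle factor equals $1$ in every summand and drops out, leaving precisely $\sum_{(a,b)\unlhd(x,y,z)}J(x,a,a)J(b,b,z)$. Setting this equal to the right-hand side $\gd_3(x,y,z)=\delta_3(x,y,z)$ yields the first displayed equation immediately.

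For the second identity I would invoke the decomposition $J=\gm\dia\gm$ from Theorem \ref{decomp}. Since $(\gm\dia\gm)(x,y,z)=\gm(x,y)\gm(y,z)$ by the definition of $\dia$, the two relevant evaluations become $J(x,a,a)=\gm(x,a)\gm(a,a)$ and $J(b,b,z)=\gm(b,b)\gm(b,z)$. Using the standard fact that $\gm(c,c)=1$ for every $c\in\calP$ (immediate from the defining relation $\sum_{x\leq c\leq y}\gm(x,c)=\gd(x,y)$ evaluated at $y=x$, where the sum collapses to the single term $c=x$), these simplify to $J(x,a,a)=\gm(x,a)$ and $J(b,b,z)=\gm(b,z)$. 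Substituting into the first identity gives $\sum_{(a,b)\unlhd(x,y,z)}\gm(x,a)\gm(b,z)=\delta_3(x,y,z)$, as required.

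I do not expect a genuine obstacle here: the entire argument is bookkeeping resting on Corollary \ref{zeta-right} and Theorem \ref{decomp}. The one point meriting a moment's care is tracking which of the two $\gm$-factors produced by $\dia$ is evaluated on a trivial interval and hence collapses to $1$. The outer arguments $(x,a,a)$ and $(b,b,z)$ are arranged exactly so that the vanishing-length factors are $\gm(a,a)$ and $\gm(b,b)$, leaving $\gm(x,a)$ and $\gm(b,z)$; one should confirm these roles are not inadvertently swapped when reading off the formula for $\dia$.
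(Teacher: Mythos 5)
Your argument is correct and is exactly what the paper intends: the corollary is stated as a direct unpacking of Corollary \ref{zeta-right} via the definition of $\Yright$ (with the $\gz_3$ factor collapsing to $1$), followed by the simplification $J(x,a,a)=\gm(x,a)\gm(a,a)=\gm(x,a)$ and $J(b,b,z)=\gm(b,b)\gm(b,z)=\gm(b,z)$ coming from Theorem \ref{decomp}. Your attention to which $\gm$-factor sits on the trivial interval is the right point of care, and you resolve it correctly.
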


Now we look at how the $J$ function behaves over products. It turns out that $J$ factors over products.

\begin{proposition}\label{prod-J-func}

If $\calP$ and $\Q$ are locally finite posets then $J_\calP \times J_\Q=J_{\calP \times \Q}.$

\end{proposition}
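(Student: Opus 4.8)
The plan is to avoid wrestling with the recursive definition of $J$ directly and instead to chain together three facts already established: the decomposition $J=\gm\dia\gm$ from Theorem \ref{decomp}, the compatibility of $\dia$ with products from Proposition \ref{dia-prod}, and the factorization of the classical M\"obius function from Corollary \ref{Mob-prod}. The key observation is that $J$ is built entirely out of $\gm$ and $\dia$, and both of those ingredients are already known to behave well under $\times$, so the product formula for $J$ should fall out formally.

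First I would expand both factors on the left using Theorem \ref{decomp}, writing $J_\calP=\gm_\calP\dia\gm_\calP$ and $J_\Q=\gm_\Q\dia\gm_\Q$, so that
\[
J_\calP\times J_\Q=(\gm_\calP\dia\gm_\calP)\times(\gm_\Q\dia\gm_\Q).
\]
Next I would apply Proposition \ref{dia-prod} with $f_\calP=g_\calP=\gm_\calP$ and $r_\Q=s_\Q=\gm_\Q$ to interchange the outer $\times$ (the product of $\JJ$-functions over the product poset) with the $\dia$ operation, obtaining $(\gm_\calP\times\gm_\Q)\dia(\gm_\calP\times\gm_\Q)$, where now $\times$ is the product of ordinary incidence functions in $\II$. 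I would then invoke Corollary \ref{Mob-prod} to replace each $\gm_\calP\times\gm_\Q$ by $\gm_{\calP\times\Q}$, yielding $\gm_{\calP\times\Q}\dia\gm_{\calP\times\Q}$. Finally, applying Theorem \ref{decomp} once more, this time over the poset $\calP\times\Q$, identifies this with $J_{\calP\times\Q}$, completing the argument.

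The main point requiring care is bookkeeping rather than any genuine obstacle: one must keep track of the two distinct $\times$ operations in play — the product of $\II$-functions appearing inside $\gm_\calP\times\gm_\Q$ and the product of $\JJ$-functions appearing in $J_\calP\times J_\Q$ — and note that Proposition \ref{dia-prod} is exactly the bridge between them. Since the hypotheses of Proposition \ref{dia-prod} and Corollary \ref{Mob-prod} are unconditional for locally finite posets, there is nothing further to verify. I would remark in passing that a direct proof is also available, by checking that $J_\calP\times J_\Q$ satisfies the defining recursion $\gz_3\Yright(-)=\gd_3$ on $\calP\times\Q$ and appealing to the uniqueness of its solution; but this route is strictly messier than the formal computation above, which is why I would present the decomposition argument instead.
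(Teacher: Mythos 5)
Your argument is correct and is essentially identical to the paper's own proof: both expand $J=\gm\dia\gm$ via Theorem \ref{decomp}, pass $\times$ through $\dia$ using Proposition \ref{dia-prod}, and then apply Corollary \ref{Mob-prod} to conclude. No issues to flag.
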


\begin{proof}

For posets $\calP$ and $\Q$ we have $J_\calP \times J_\Q=(\gm_\calP \dia \gm_\calP)\times (\gm_\Q \dia \gm_\Q)$ by definition. By  Proposition \ref{dia-prod} $(\gm_\calP \dia \gm_\calP)\times (\gm_\Q \dia \gm_\Q)=(\gm_\calP \times \gm_\Q)\dia (\gm_\calP \times \gm_\Q)$. Then using Proposition \ref{Mob-prod} we get $(\gm_\calP \times \gm_\Q)\dia (\gm_\calP \times \gm_\Q)=\gm_{\calP\times \Q}\dia \gm_{\calP\times \Q}=J_{\calP\times \Q}.$\end{proof}

Next we look at a generalization of Phillip Hall's Theorem. For $(x,y,z)\in \Fl^3(\calP)$ set 
$$c_{i,j}(x,y,z) =\left| \{ (a_0,\ldots ,a_{i+j})\in \Fl^{i+j+1}: \forall k, \ a_k<a_{k+1} \text{ and } a_0=x,a_i=y,a_{i+j}=z\}\right|.$$ There is a bijection between the underlying set of $c_{i,j}(x,y,z)$ to the product of the under lying sets of $c_i(x,y)$ and $c_j(y,z)$. This results in the following.

\begin{lemma}\label{chain-prod}
If $\calP$ is a locally finite poset and $(x,y,z)\in \Fl^3(\calP)$ then $c_{i,j}(x,y,z)=c_i(x,y)c_j(y,z).$
\end{lemma}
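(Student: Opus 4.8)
The plan is to construct an explicit bijection between the set enumerated by $c_{i,j}(x,y,z)$ and the Cartesian product of the sets enumerated by $c_i(x,y)$ and $c_j(y,z)$, since establishing a bijection between finite sets immediately yields equality of their cardinalities, which is precisely the claimed identity. The crucial structural observation is that every chain counted by $c_{i,j}(x,y,z)$ is a strictly increasing sequence $(a_0,\ldots,a_{i+j})$ that is pinned at three points: $a_0=x$, $a_i=y$, and $a_{i+j}=z$. Because the middle pin $a_i=y$ occurs at a fixed index, such a chain splits naturally at position $i$ into an initial segment and a terminal segment.

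Concretely, I would define the map $\Phi$ sending $(a_0,\ldots,a_{i+j})$ to the pair consisting of $(a_0,\ldots,a_i)$ and $(a_i,a_{i+1},\ldots,a_{i+j})$. First I would verify this lands in the right target: the first component is a strictly increasing chain with $a_0=x$ and $a_i=y$, hence an element of the set counted by $c_i(x,y)$; the second component is strictly increasing with $a_i=y$ and $a_{i+j}=z$, hence counted by $c_j(y,z)$. The strict-increase condition $a_k<a_{k+1}$ on the whole chain restricts to the strict-increase condition on each segment, so no monotonicity is lost. The inverse map $\Psi$ takes a pair of chains $(b_0,\ldots,b_i)$ and $(c_0,\ldots,c_j)$, which necessarily satisfy $b_i=y=c_0$, and concatenates them at their shared endpoint $y$ into the single sequence $(b_0,\ldots,b_i,c_1,\ldots,c_j)$. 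I would check that $\Phi\circ\Psi$ and $\Psi\circ\Phi$ are both the identity, which is routine since splitting at index $i$ and re-gluing at the common value $y$ are mutually inverse operations.

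The one point requiring genuine care, rather than the main obstacle, is confirming that concatenation produces a genuinely strictly increasing chain: the gluing creates the adjacency $b_i=y$ followed by $c_1$, and strict increase there is guaranteed precisely because the second input chain satisfies $c_0<c_1$ with $c_0=y=b_i$. I would flag that the agreement of the two segments at the value $y$ (not merely at the index) is what makes the concatenation well-defined and the two maps mutually inverse. Since both $\Phi$ and $\Psi$ are defined by a deterministic rule with no choices, and each is plainly total on its domain, establishing the bijection reduces to these two identity verifications, after which $c_{i,j}(x,y,z)=|{\rm domain}|=|{\rm image}|=c_i(x,y)\,c_j(y,z)$ follows at once.
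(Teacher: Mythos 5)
Your proof is correct and is exactly the argument the paper intends: the paper simply asserts, in the sentence preceding the lemma, that there is a bijection between the chains counted by $c_{i,j}(x,y,z)$ and the product of the sets counted by $c_i(x,y)$ and $c_j(y,z)$, and your splitting-at-index-$i$ / concatenation-at-$y$ maps are precisely that bijection, spelled out with the verifications the paper omits.
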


This leads to a generalization of Phillip Hall's Theorem for the $J$ function.

\begin{theorem}\label{hall-gen}
If $\calP$ is a locally finite poset and $(x,y,z)\in \Fl^3(\calP)$ then $$J(x,y,z)=\sum\limits_{i,j\in \NN} (-1)^{i+j}c_{i,j}(x,y,z) .$$
\end{theorem}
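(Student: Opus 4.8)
The plan is to reduce this generalized Hall theorem to the classical Phillip Hall theorem (Theorem \ref{hall-orig}) by exploiting the decomposition $J = \gm \dia \gm$ from Theorem \ref{decomp} together with the chain-counting factorization in Lemma \ref{chain-prod}. The key observation is that every ingredient on both sides of the desired identity already splits as a product indexed by the two intervals $[x,y]$ and $[y,z]$, so the result should follow by multiplying two instances of the classical theorem.

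First I would unwind the left-hand side using the definition of $\dia$: by Theorem \ref{decomp} we have $J(x,y,z) = (\gm \dia \gm)(x,y,z) = \gm(x,y)\gm(y,z)$. Next I would attack the right-hand side. Using Lemma \ref{chain-prod} to substitute $c_{i,j}(x,y,z) = c_i(x,y)c_j(y,z)$, the double sum factors as
\begin{align*}
\sum_{i,j\in\NN}(-1)^{i+j}c_{i,j}(x,y,z) &= \sum_{i,j\in\NN}(-1)^{i}(-1)^{j}c_i(x,y)c_j(y,z)\\
&= \left[\sum_{i\in\NN}(-1)^i c_i(x,y)\right]\left[\sum_{j\in\NN}(-1)^j c_j(y,z)\right].
\end{align*}
The separation into a product of two independent sums is the crux of the argument, and it is legitimate precisely because $\calP$ is locally finite, so each interval is finite and only finitely many of the $c_i(x,y)$ and $c_j(y,z)$ are nonzero; thus the sums converge and the Cauchy-product rearrangement is valid.

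Finally I would apply the classical Phillip Hall theorem (Theorem \ref{hall-orig}) to each bracketed factor: the first sum equals $\gm(x,y)$ and the second equals $\gm(y,z)$. Hence the right-hand side equals $\gm(x,y)\gm(y,z)$, which matches the expression we obtained for $J(x,y,z)$, completing the proof.

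The main obstacle I anticipate is purely bookkeeping rather than conceptual: one must be careful that the index conventions in $c_{i,j}$ align correctly with the single-interval chain counts $c_i$ and $c_j$ — in particular that the middle element $a_i = y$ is shared and not double-counted — so that Lemma \ref{chain-prod} applies cleanly and the factorization of the double sum into a product of sums is exactly the product of the two Hall expressions. Since Lemma \ref{chain-prod} is stated as already handling this bijection, the remaining work is only to verify the finiteness justifying the rearrangement, which local finiteness supplies.
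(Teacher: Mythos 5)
Your proposal is correct and follows essentially the same route as the paper's own proof: decompose $J(x,y,z)=\gm(x,y)\gm(y,z)$ via Theorem \ref{decomp}, apply the classical Hall theorem to each factor, and use Lemma \ref{chain-prod} to identify the resulting double sum with the sum over $c_{i,j}$. You simply traverse the chain of equalities in the opposite direction, which changes nothing of substance.
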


\begin{proof} Let $(x,y,z)\in \Fl^3(\calP)$. By Theorem \ref{decomp} $J(x,y,z)=\gm(x,y)\gm(y,z).$ Then using Theorem \ref{hall-orig} we get \begin{align*}J(x,y,z)=&\left[\sum\limits_{i\in \NN}(-1)^ic_i(x,y)\right] \left[ \sum\limits_{j\in \NN}(-1)^jc_j(y,z)\right] \\
=&\sum\limits_{i,j\in \NN}(-1)^{i+j}c_i(x,y)c_i(y,z).
\end{align*} Lemma \ref{chain-prod} finishes the proof.\end{proof}

Now we focus on a version of Rota's cross-cut theorem for the $J$ function. We state this following the style of Lemma 2.35 in \cite{OT} and Theorem 2.4.9 in \cite{M-lec-12} which are forms of Rota's original Cross-cut Theorem in \cite{rota64}. To state this result we need the following definition.

\begin{definition}\label{3-cut-def}

Let $L$ be a finite lattice, $(x,y,z)\in \Fl^3(\L)$, $S_{x,y}$ be a lower cross-cut of $[x,y]$, and $S_{y,z}$ be a lower cross-cut of $[y,z]$ as in Definition \ref{cross-def}. We call $S_{x,y,z}=S_{x,y}\bigsqcup S_{y,z}$ a double lower cross cut of $(x,y,z)$ and call $S_{x,y}$ and $S_{y,z}$ the components of $S_{x,y,z}$. Similarly we can define $T_{x,y,z}=T_{x,y}\bigsqcup T_{y,z}$ (as well as $ST_{x,y,z}=S_{x,y}\bigsqcup T_{y,z}$ and $TS_{x,y,z}=T_{x,y}\bigsqcup S_{y,z}$).

\end{definition}

\begin{theorem}\label{3-cut-thm}

If $L$ is a finite lattice, $(x,y,z)\in \Fl^3(\L)$, $S_{x,y,z}$ is a double lower cross-cut of $(x,y,z)$ with components $S_{x,y}$ and $S_{y,z}$ then $$J(x,y,z)=\sum\limits_{\substack{ A\subseteq S_{x,y,z} \\
\bigvee (A\cap S_{x,y})=y\\
\bigvee (A\cap S_{y,z})=z }} \hspace{-0.5cm}(-1)^{|A|} .$$

\end{theorem}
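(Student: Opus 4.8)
The plan is to exploit the fact that the double lower cross-cut $S_{x,y,z}=S_{x,y}\sqcup S_{y,z}$ is a \emph{disjoint} union, so that the sum on the right-hand side factors as a product of two ordinary cross-cut sums, to each of which the classical Theorem \ref{2-cross-thm} applies directly; the product is then identified with $J(x,y,z)$ by Theorem \ref{decomp}.

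First I would record the bijective decomposition of subsets. Because $S_{x,y}$ and $S_{y,z}$ are disjoint, every $A\subseteq S_{x,y,z}$ is uniquely $A=A_1\sqcup A_2$ with $A_1=A\cap S_{x,y}\subseteq S_{x,y}$ and $A_2=A\cap S_{y,z}\subseteq S_{y,z}$, and conversely each pair $(A_1,A_2)$ of such subsets arises from exactly one $A$. Under this correspondence $|A|=|A_1|+|A_2|$, so $(-1)^{|A|}=(-1)^{|A_1|}(-1)^{|A_2|}$, while the two index conditions $\bigvee(A\cap S_{x,y})=y$ and $\bigvee(A\cap S_{y,z})=z$ become the independent conditions $\bigvee A_1=y$ and $\bigvee A_2=z$ on the two components separately.

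Next I would use this decomposition to split the double sum into a product of single sums:
$$\sum_{\substack{A\subseteq S_{x,y,z}\\ \bigvee(A\cap S_{x,y})=y\\ \bigvee(A\cap S_{y,z})=z}}(-1)^{|A|}=\left(\sum_{\substack{A_1\subseteq S_{x,y}\\ \bigvee A_1=y}}(-1)^{|A_1|}\right)\left(\sum_{\substack{A_2\subseteq S_{y,z}\\ \bigvee A_2=z}}(-1)^{|A_2|}\right).$$
Since $S_{x,y}$ is a lower cross-cut of $[x,y]$ and $S_{y,z}$ is a lower cross-cut of $[y,z]$ (Definition \ref{3-cut-def}), Theorem \ref{2-cross-thm} identifies the first factor with $\gm(x,y)$ and the second with $\gm(y,z)$. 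Finally, Theorem \ref{decomp} gives $J(x,y,z)=\gm(x,y)\gm(y,z)$, which matches the product and completes the argument.

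I do not expect a serious obstacle: the entire content is the observation that the join conditions decouple across the two disjoint components, which is what lets the classical result be applied factorwise. The only points warranting a sentence of care are that each component is a genuine cross-cut of its interval (guaranteed by Definition \ref{3-cut-def}) and that the empty-join edge cases are absorbed into the classical statement. I would also remark that the parallel identities for $T_{x,y,z}$, $ST_{x,y,z}$, and $TS_{x,y,z}$ follow by the identical factorization, invoking the upper cross-cut (dual) form of Theorem \ref{2-cross-thm} on whichever component is an upper cross-cut.
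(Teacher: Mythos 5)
Your argument is correct and is essentially identical to the paper's proof: both use Theorem \ref{decomp} to write $J(x,y,z)=\gm(x,y)\gm(y,z)$, apply the classical cross-cut Theorem \ref{2-cross-thm} to each factor, and use the disjointness of $S_{x,y}\bigsqcup S_{y,z}$ to identify the product of the two sums with the single sum over $A\subseteq S_{x,y,z}$. The only difference is the direction of presentation (you factor the sum, the paper expands the product), which is immaterial.
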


\begin{proof}

Again we use Theorem \ref{decomp} together with the classical Theorem \ref{2-cross-thm} \begin{align*} J(x,y,z)=&\gm (x,y)\gm(y,z)\\
=& \left[ \sum\limits_{\substack{A_1\subseteq S_{x,y}\\ \bigvee A_1=y}}(-1)^{|A_1|}\right] \left[ \sum\limits_{\substack{A_2\subseteq S_{y,z}\\ \bigvee A_2=z}}(-1)^{|A_2|}\right] \\
=& \sum\limits_{\substack{A_1\subseteq S_{x,y}\\ \bigvee A_1=y}}\sum\limits_{\substack{A_2\subseteq S_{y,z}\\ \bigvee A_2=z}} (-1)^{|A_1|+|A_2|} .
\end{align*} Since the union in Definition \ref{3-cut-def} is disjoint $|A_1|+|A_2|=|A_1\bigsqcup A_2|$ and we have finished the proof. \end{proof}

We end this section with a generalization of Weisner's Theorem \ref{Weisner}. The interesting observation of this fact is that the middle variable of the function is crucial.

%
%
%
%
%

\begin{theorem}\label{Weisner-gen}
If $L$ is a finite lattice with at least three elements and  $\hat{0}<a<b\in L$ then  $$\sum\limits_{\substack{x\in L \\ x \wedge a=\hat{0}}}J(x,b,\hat{1})=0.$$
\end{theorem}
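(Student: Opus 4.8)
The plan is to reduce the whole statement to the classical Weisner Theorem \ref{Weisner} applied to the interval $[\hat{0},b]$, using the factorization of $J$ from Theorem \ref{decomp}. First I would invoke Theorem \ref{decomp} to write $J(x,b,\hat{1})=\gm(x,b)\gm(b,\hat{1})$ for every $x$ with $(x,b,\hat{1})\in\Fl^3(L)$, that is, for every $x\le b$. Since the factor $\gm(b,\hat{1})$ does not depend on $x$, it can be pulled out of the sum, leaving
$$\sum\limits_{\substack{x\in L\\ x\wedge a=\hat{0}}}J(x,b,\hat{1})=\gm(b,\hat{1})\sum\limits_{\substack{x\le b\\ x\wedge a=\hat{0}}}\gm(x,b),$$
where the inner sum runs only over $x\le b$, because $J(x,b,\hat{1})$ is defined only on $\Fl^3(L)$ (equivalently, one reads $J(x,b,\hat{1})=0$ whenever $x\not\le b$).

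The key observation is that the remaining inner sum is exactly a classical Weisner sum inside the sublattice $[\hat{0},b]$. In this interval the minimum is $\hat{0}$ and the maximum is $b$; moreover, for elements $x,a\le b$ the meet $x\wedge a$ computed in $L$ coincides with the meet computed in $[\hat{0},b]$, so the condition $x\wedge a=\hat{0}$ is intrinsic to the interval. Since $\hat{0}<a<b$, the element $a$ lies in $[\hat{0},b]$ and is distinct from the top element $b$, and the interval contains at least the three elements $\hat{0},a,b$. Applying Theorem \ref{Weisner} to the lattice $[\hat{0},b]$ with the non-maximal element $a$ then yields $\sum_{x\le b,\, x\wedge a=\hat{0}}\gm(x,b)=0$, so the entire expression vanishes regardless of the value of $\gm(b,\hat{1})$.

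I expect the only delicate point to be bookkeeping rather than genuine mathematics: one must keep track of the flag constraint, which forces $x\le b$ so that $J(x,b,\hat{1})$ is defined, and one must verify that passing to the interval $[\hat{0},b]$ changes neither the meet operation nor the extreme elements entering the Weisner hypothesis. This is also where the remark preceding the theorem becomes transparent. The splitting $J(x,b,\hat{1})=\gm(x,b)\gm(b,\hat{1})$ occurs at the \emph{middle} variable $b$, so it is precisely $b$ (and not $\hat{1}$) that determines the sublattice on which the classical theorem is invoked; this is exactly why the middle variable is essential to the statement.
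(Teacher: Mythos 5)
Your proposal is correct and follows essentially the same route as the paper: factor $J(x,b,\hat{1})=\gm(x,b)\gm(b,\hat{1})$ via Theorem \ref{decomp}, pull out $\gm(b,\hat{1})$, and apply the classical Weisner theorem to the interval $[\hat{0},b]$ with the non-maximal element $a$. Your version is in fact slightly more careful than the paper's about the bookkeeping (the implicit restriction to $x\le b$ and the fact that the meet and the Weisner hypotheses are intrinsic to $[\hat{0},b]$), but the mathematics is identical.
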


\begin{proof}

We compute the sum again using Theorem \ref{decomp}: \begin{align*}\sum\limits_{\substack{x\in L \\ x \wedge a=\hat{0}}}J(x,b,\hat{1}) &=\sum\limits_{\substack{x \\ x \wedge a=\hat{0}}}\gm(x,b)\gm(b,\hat{1})\\
&=\gm(b,\hat{1})\sum\limits_{\substack{x \\ x \wedge a=\hat{0}}}\gm(x,b)\\
&=\gm(b,\hat{1})\cdot 0=0
\end{align*} since $a<b$ we can apply Weisner's Theorem \ref{Weisner}. \end{proof}

\begin{remark}

There is a dual version of this result for where we sum over the left most variable as in \cite{rota64}. However, we do not see a version that sums over the middle variable.

\end{remark}


\section{Generalized characteristic and M\"obius polynomials}\label{chara-mob-sec}

In this section we examine two polynomials defined by summing over all values of the $J$ function on a ranked poset. One mimics the characteristic polynomial of a matroid and the other looks like a one variable M\"obius polynomial. We find more interesting information inside the generalized M\"obius polynomial than the generalized characteristic polynomial. That is opposite of the state of affairs in the literature on the classical polynomials, but we do not know why.

\begin{definition}

For $\calP$ a ranked finite poset with minimum element $\hat{0}$ and maximum element $\hat{1}$ the $J$-characteristic polynomial of $\calP$ is $$\J(\calP,t)=(-1)^{\rk(\calP)}\sum\limits_{x\in \calP}J(\hat{0},x,\hat{1})t^{\rk(\calP)-\rk (x)}.$$

\end{definition}

\begin{definition}

Let $\calP$ be a ranked finite poset and for $(x,y,z)\in \Fl^3(\calP)$ let $\gr (x,y,z)=3\rk(\calP)-\rk (x)-\rk(y)-\rk(z)$. The $J$-M\"obius polynomial of $\calP$ is $$\M(\calP,t)=\sum\limits_{(x,y,z)\in\Fl^3(\calP)}J(x,y,z)t^{\gr (x,y,z)}.$$

\end{definition}

We may sometimes refer to $\rk(\calP)-\rk (x)$ as $\crk(x)$. These polynomials satisfy some nice basic properties. For example it turns out that the coefficients of $\J(\calP,t)$ are positive for nice $\calP$. For convenience if $L$ is a ranked poset let $L_k=\{x\in L| \rk(x)=k\}$. 

\begin{proposition}\label{positive-coeffs}

If $L$ is a finite semimodular lattice then the coefficients of $\J(L,t)$ are positive.

\end{proposition}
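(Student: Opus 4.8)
The plan is to reduce the claim to the classical sign alternation of the M\"obius function via the decomposition $J=\gm\dia\gm$ established in Theorem \ref{decomp}, after which positivity follows by pure sign bookkeeping.

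First I would rewrite the summands using Theorem \ref{decomp}: for each $x\in L$ we have $J(\hat{0},x,\hat{1})=\gm(\hat{0},x)\gm(x,\hat{1})$. Grouping terms of equal degree and writing $r=\rk(L)$, the coefficient of $t^{r-k}$ in $\J(L,t)$ is
$$(-1)^{r}\sum_{x\in L_k}\gm(\hat{0},x)\gm(x,\hat{1}),$$
so it suffices to prove that each summand, multiplied by $(-1)^{r}$, is strictly positive and that the index set $L_k$ is nonempty.

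Next I would apply Lemma \ref{alternates} to the two factors. With $\rk(\hat{0})=0$, $\rk(\hat{1})=r$, and $\rk(x)=k$, the lemma gives $\sgn(\gm(\hat{0},x))=(-1)^{k}$ and $\sgn(\gm(x,\hat{1}))=(-1)^{k+r}$; in particular both values are nonzero. Multiplying, the product $\gm(\hat{0},x)\gm(x,\hat{1})$ has sign $(-1)^{2k+r}=(-1)^{r}$ for \emph{every} $x$, so $(-1)^{r}\gm(\hat{0},x)\gm(x,\hat{1})>0$ termwise. Finally, since $L$ is a ranked lattice, any maximal chain from $\hat{0}$ to $\hat{1}$ passes through one element of each rank, so $L_k\neq\emptyset$ for all $0\le k\le r$; hence each coefficient is a nonempty sum of strictly positive reals and is therefore positive.

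The argument carries no real obstacle once Theorem \ref{decomp} and Lemma \ref{alternates} are available: the only points demanding attention are confirming that $\gm(\hat{0},x)\gm(x,\hat{1})$ carries the same uniform sign $(-1)^{r}$ independently of $k$, and ensuring that every rank level is occupied so that the coefficients come out strictly positive rather than merely nonnegative.
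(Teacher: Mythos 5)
Your proof is correct and follows essentially the same route as the paper's: decompose $J(\hat{0},x,\hat{1})=\gm(\hat{0},x)\gm(x,\hat{1})$ via Theorem \ref{decomp}, then apply Lemma \ref{alternates} to see that every summand of a given coefficient carries the uniform sign $(-1)^{\rk(L)}$. Your added observation that each rank level $L_k$ is nonempty (so the coefficients are strictly positive rather than merely nonnegative) is a small point the paper leaves implicit.
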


\begin{proof} Using Theorem \ref{decomp} we get that $$\J(L,t)=(-1)^{\rk(L)}\sum\limits_{x\in L}\gm(\hat{0},x)\gm(x,\hat{1})t^{\rk(L)-\rk (x)}.$$ So, the coefficient of $t^k$ is $$c_k = (-1)^{\rk (L)}\sum\limits_{x\in L_k}\gm(\hat{0},x)\gm(x,\hat{1}).$$ Then note that by applying Lemma \ref{alternates} we have $$\sgn( \gm(\hat{0},x)\gm(x,\hat{1}))=(-1)^{\rk(\hat{0}) +\rk(x)}(-1)^{\rk(x)+\rk(\hat{1})}=(-1)^{\rk(L)}.$$ Hence $\sgn (c_k)=(-1)^{2\rk(L)}=1$. \end{proof}

Now we look at a foundational property for the $J$-M\"obius polynomial.

\begin{proposition}\label{root1}

If $L$ is a finite lattice with at least two elements then $\M(L,1)=0$.

\end{proposition}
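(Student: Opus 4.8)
The plan is to evaluate $\M(L,1)$ by recognizing that setting $t=1$ simply strips away all the powers of $t$, leaving the bare sum of $J$-values over every $3$-flag. That is,
$$\M(L,1)=\sum_{(x,y,z)\in\Fl^3(L)}J(x,y,z).$$
The natural tool is Theorem \ref{decomp}, which gives $J(x,y,z)=\gm(x,y)\gm(y,z)$. First I would substitute this in and reorganize the sum so that the outer index is the middle element $y$, with $x$ ranging over $[\hat 0,y]$ below it and $z$ ranging over $[y,\hat 1]$ above it. Since the condition defining $\Fl^3(L)$ is exactly $x\leq y\leq z$, the triple sum factors cleanly:
$$\M(L,1)=\sum_{y\in L}\left(\sum_{\hat 0\leq x\leq y}\gm(x,y)\right)\left(\sum_{y\leq z\leq \hat 1}\gm(y,z)\right).$$

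Next I would evaluate the two inner sums using the defining property of the classical M\"obius function from the Definition in Section \ref{incidence-sec}, namely $\sum_{x\leq a\leq w}\gm(a,w)=\gd(x,w)$ and $\sum_{x\leq a\leq w}\gm(x,a)=\gd(x,w)$. Applied here, the left factor $\sum_{\hat 0\leq x\leq y}\gm(x,y)$ equals $\gd(\hat 0,y)$ and the right factor $\sum_{y\leq z\leq \hat 1}\gm(y,z)$ equals $\gd(y,\hat 1)$. Therefore each summand is $\gd(\hat 0,y)\gd(y,\hat 1)$, which is nonzero only when $y=\hat 0$ and simultaneously $y=\hat 1$. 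Since $L$ has at least two elements, $\hat 0\neq \hat 1$, so no value of $y$ makes both Kronecker deltas equal to $1$, and every term vanishes. Hence $\M(L,1)=0$.

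The argument is essentially a double application of the M\"obius summation identity made possible by the product decomposition $J=\gm\dia\gm$, so there is no serious obstacle. The one point requiring a little care is the interchange of summation order and the factorization step: I must make sure that the ranges of $x$ and $z$ are genuinely independent once $y$ is fixed, which is exactly what the product-poset-style condition $x\leq y\leq z$ guarantees. If desired, this can be phrased even more slickly as $\M(L,1)=\bigl(\gz_3\Yright J\bigr)$-type reasoning, but I would avoid that route since the near-ring product $\Yright$ carries an extra middle factor and an interior sum over two intermediate elements rather than the clean factored form above; the direct computation via Theorem \ref{decomp} is cleaner and self-contained.
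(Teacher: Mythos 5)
Your proof is correct, but it takes a slightly different route from the paper's. You invoke Theorem \ref{decomp} to write $J(x,y,z)=\gm(x,y)\gm(y,z)$, factor the triple sum over the middle element $y$ into a product of two classical M\"obius sums, and kill each factor with the defining identity $\sum\gm=\gd$. The paper instead never touches Theorem \ref{decomp}: it groups the sum by $y$ and recognizes the inner sum $\sum_{(x,z)\unlhd(\hat 0,y,\hat 1)}J(x,y,z)$ as literally the defining recursion of $J$ evaluated at the triple $(\hat 0,y,\hat 1)$, which equals $\gd_3(\hat 0,y,\hat 1)=0$ since $\hat 0\neq\hat 1$. Amusingly, this is exactly the ``$\gz_3\Yright J$-type reasoning'' you flagged and declined at the end of your write-up -- and it is harmless here because the extra factors $\gz_3(x,a,a)$ and $\gz_3(b,b,z)$ are all $1$, so the near-ring product introduces no complication. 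What the paper's route buys is self-containment (only the definition of $J$ is needed, not the decomposition $J=\gm\dia\gm$, which itself rests on Proposition \ref{almosthom}); what your route buys is transparency, since the vanishing is exhibited as a product of two independent Kronecker deltas rather than a single $\gd_3$. Both are sound, and your factorization step is justified exactly as you say: once $y$ is fixed, the ranges $\hat 0\leq x\leq y$ and $y\leq z\leq\hat 1$ are independent.
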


\begin{proof} Since $L$ is a finite lattice with at least two elements we know there is a minimum element $\hat{0}$ and a maximum element $\hat{1}$. Then 
\begin{align*}\M(\calP,1)=&\sum\limits_{(x,y,z)\in\Fl^3(L)}J(x,y,z)\\
=&\sum\limits_{y\in L}\left[\sum\limits_{(x,z)\unlhd (\hat{0},y,\hat{1})}J(x,y,z)\right]\\
=&\sum\limits_{y\in L}\left[\gd_3(\hat{0},y,\hat{1})\right] .\\
\end{align*} Since $L$ has at least two elements $\hat{0}\neq \hat{1}$ so $\gd_3(\hat{0},y,\hat{1})$ is zero for all $y$.\end{proof}

We also have products formulas for both of these polynomials.

\begin{proposition}\label{prod-J-char}

If $\calP$ and $\Q$ are ranked finite posets then $\J(\calP\times \Q,t)=\J(\calP,t)\J(\Q,t)$.

\end{proposition}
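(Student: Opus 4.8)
The plan is to reduce the statement to the product formula for the $J$-function, Proposition \ref{prod-J-func}, together with the additivity of rank over a product of ranked posets. First I would record the structural facts I need about $\calP\times \Q$: its minimum element is $(\hat{0}_\calP,\hat{0}_\Q)$, its maximum is $(\hat{1}_\calP,\hat{1}_\Q)$, every element is a pair $(x_1,x_2)$ with $x_1\in\calP$ and $x_2\in\Q$, and the rank function is additive, $\rk(x_1,x_2)=\rk(x_1)+\rk(x_2)$. In particular $\rk(\calP\times\Q)=\rk(\calP)+\rk(\Q)$, so the global sign factors as $(-1)^{\rk(\calP\times\Q)}=(-1)^{\rk(\calP)}(-1)^{\rk(\Q)}$.

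Next I would substitute the definition of $\J(\calP\times\Q,t)$ and apply Proposition \ref{prod-J-func} to rewrite $J_{\calP\times\Q}\big((\hat{0}_\calP,\hat{0}_\Q),(x_1,x_2),(\hat{1}_\calP,\hat{1}_\Q)\big)$ as the product $J_\calP(\hat{0}_\calP,x_1,\hat{1}_\calP)\,J_\Q(\hat{0}_\Q,x_2,\hat{1}_\Q)$. The single sum over $\calP\times\Q$ then separates into a double sum over $x_1\in\calP$ and $x_2\in\Q$. Using rank additivity, the exponent $\rk(\calP\times\Q)-\rk(x_1,x_2)$ becomes $\big(\rk(\calP)-\rk(x_1)\big)+\big(\rk(\Q)-\rk(x_2)\big)$, so the power $t^{\rk(\calP\times\Q)-\rk(x_1,x_2)}$ likewise factors as a product of the two corresponding powers of $t$. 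At this point every factor in the summand depends on exactly one of $x_1$ or $x_2$, the double sum splits as a product of two independent sums, and distributing the two signs appropriately yields precisely $\J(\calP,t)$ and $\J(\Q,t)$, completing the argument.

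The computation is entirely routine once Proposition \ref{prod-J-func} and rank additivity are in hand, so I do not expect a genuine obstacle; the only point demanding care is the bookkeeping for the ranked structure of the product. One must confirm that $\calP\times\Q$ is again ranked with the additive rank function and that its extreme elements are the coordinatewise extremes, so that the definition of $\J$ even applies to $\calP\times\Q$ and so that the separation of signs, of the polynomial exponents, and of the $J$-values all occur simultaneously.
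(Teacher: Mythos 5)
Your proposal is correct and matches the paper's proof in all essentials: both rest on Proposition \ref{prod-J-func} together with additivity of rank (hence of corank and of the sign) over the product poset. The only cosmetic difference is direction — the paper expands $\J(\calP,t)\J(\Q,t)$ into the double sum and recombines it, while you start from $\J(\calP\times\Q,t)$ and split it apart.
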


\begin{proof} Using Proposition \ref{prod-J-func} we get that 
\begin{align*}\J(\calP,t)\J(\Q,t) = & \left[(-1)^{\rk(\calP)}\sum\limits_{p\in \calP}J_\calP(\hat{0},p,\hat{1})t^{\crk(p)}\right] \left[(-1)^{\rk(\Q)}\sum\limits_{q\in \Q}J_\Q(\hat{0},q,\hat{1})t^{\crk(q)}\right] \\
=& (-1)^{\rk(\calP)+\rk (\Q)}\sum\limits_{p\in \calP}\sum\limits_{q\in \Q}J_\calP(\hat{0},p,\hat{1})J_\Q(\hat{0},q,\hat{1})t^{\crk(p)+\crk(q)}\\
=& (-1)^{\rk(\calP\times \Q)}\sum\limits_{(p,q)\in \calP\times \Q}J_{\calP\times \Q}((\hat{0},\hat{0}),(p,q),(\hat{1},\hat{1}))t^{\crk(p,q)}\\
=&\J(\calP\times \Q,t) .
\end{align*}\end{proof}

The proof of the following is almost identical.

\begin{proposition}\label{prod-J-mob}

If $\calP$ and $\Q$ are ranked finite posets then $\M(\calP\times \Q,t)=\M(\calP,t)\M(\Q,t)$.

\end{proposition}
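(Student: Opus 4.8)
The plan is to mirror the proof of Proposition \ref{prod-J-char} almost verbatim, replacing the single-variable characteristic sum by the full three-flag sum that defines $\M$. The essential input is Proposition \ref{prod-J-func}, which tells us that $J_{\calP\times\Q}=J_\calP\times J_\Q$, i.e. that $J$ evaluated on a flag of pairs factors as the product of the two coordinate-wise $J$ values. The only other ingredient I need is that the rank exponent $\gr$ is additive across the product, which is immediate from the fact that $\rk((p,q))=\rk(p)+\rk(q)$ in $\calP\times\Q$ together with $\rk(\calP\times\Q)=\rk(\calP)+\rk(\Q)$.

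First I would start from the right-hand side, writing $\M(\calP,t)\M(\Q,t)$ as a product of two sums indexed by flags in $\Fl^3(\calP)$ and $\Fl^3(\Q)$ respectively, and distribute to obtain a double sum over pairs of flags. Next I would use the bijection between $\Fl^3(\calP)\times\Fl^3(\Q)$ and $\Fl^3(\calP\times\Q)$ sending a pair of flags $((x_1,y_1,z_1),(x_2,y_2,z_2))$ to the single flag $((x_1,x_2),(y_1,y_2),(z_1,z_2))$; this is a bijection precisely because the order on the product poset is coordinate-wise, so a pair of chains in the factors is the same datum as one chain in the product. Under this identification the product of $J$-values collapses to a single $J_{\calP\times\Q}$-value by Proposition \ref{prod-J-func}, and the product $t^{\gr_\calP(x_1,y_1,z_1)}t^{\gr_\Q(x_2,y_2,z_2)}$ becomes $t^{\gr_{\calP\times\Q}((x_1,x_2),(y_1,y_2),(z_1,z_2))}$ by the rank additivity just noted. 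The resulting expression is exactly $\M(\calP\times\Q,t)$.

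Concretely the central computation reads
\begin{align*}
\M(\calP,t)\M(\Q,t)
=&\left[\sum_{(x_1,y_1,z_1)\in\Fl^3(\calP)}J_\calP(x_1,y_1,z_1)t^{\gr_\calP(x_1,y_1,z_1)}\right]\\
&\cdot\left[\sum_{(x_2,y_2,z_2)\in\Fl^3(\Q)}J_\Q(x_2,y_2,z_2)t^{\gr_\Q(x_2,y_2,z_2)}\right]\\
=&\sum_{\substack{(x_1,y_1,z_1)\in\Fl^3(\calP)\\ (x_2,y_2,z_2)\in\Fl^3(\Q)}}
J_\calP(x_1,y_1,z_1)J_\Q(x_2,y_2,z_2)\,t^{\gr_\calP(x_1,y_1,z_1)+\gr_\Q(x_2,y_2,z_2)}\\
=&\sum_{((x_1,x_2),(y_1,y_2),(z_1,z_2))\in\Fl^3(\calP\times\Q)}
J_{\calP\times\Q}((x_1,x_2),(y_1,y_2),(z_1,z_2))\,t^{\gr_{\calP\times\Q}(\cdots)}\\
=&\M(\calP\times\Q,t),
\end{align*}
where the third equality invokes both the flag bijection and Proposition \ref{prod-J-func}.

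I do not anticipate a genuine obstacle here; as the paper itself remarks, the argument is almost identical to Proposition \ref{prod-J-char}. The one point that warrants a sentence of care, rather than a hidden assumption, is verifying the exponent additivity $\gr_{\calP\times\Q}=\gr_\calP+\gr_\Q$ under the flag bijection, since $\gr$ carries a factor of $3\rk(\calP)$ that must split correctly as $3\rk(\calP)+3\rk(\Q)=3\rk(\calP\times\Q)$ and each of the three subtracted ranks must split coordinate-wise. Once that bookkeeping is recorded the proof is complete.
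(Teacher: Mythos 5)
Your proof is correct and is exactly the argument the paper intends: it declines to write out a proof, noting only that it is ``almost identical'' to that of Proposition \ref{prod-J-char}, and your version carries out precisely that adaptation, with Proposition \ref{prod-J-func} and the additivity of $\gr$ over the flag bijection doing the work. Nothing further is needed.
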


Now we can use these product formulas to establish formulas for Boolean matroids.

\begin{proposition}\label{J-boolean}

If $B_n$ is the Boolean lattice then $$\J(B_n,t)=(t+1)^n.$$
\end{proposition}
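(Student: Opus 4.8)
The plan is to prove this by induction on $n$, using the product formula for the $J$-characteristic polynomial established in Proposition \ref{prod-J-char}. The key observation is that the Boolean lattice $B_n$ factors as a product of $n$ copies of the rank-one Boolean lattice $B_1$, that is, $B_n \cong B_1 \times B_1 \times \cdots \times B_1$ ($n$ factors), since a subset of $[n]$ is determined by its membership indicator in each coordinate and the subset order corresponds to the product order.

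First I would compute the base case $\J(B_1,t)$ directly from the definition. The lattice $B_1$ has two elements $\hat{0}$ and $\hat{1}$ with $\rk(B_1)=1$. By Theorem \ref{decomp} we have $J(\hat{0},x,\hat{1})=\gm(\hat{0},x)\gm(x,\hat{1})$, so I need only evaluate this at $x=\hat{0}$ and $x=\hat{1}$. Using the standard values of the classical M\"obius function on $B_1$ (namely $\gm(\hat{0},\hat{0})=\gm(\hat{1},\hat{1})=1$ and $\gm(\hat{0},\hat{1})=-1$), I get $J(\hat{0},\hat{0},\hat{1})=\gm(\hat{0},\hat{0})\gm(\hat{0},\hat{1})=-1$ and $J(\hat{0},\hat{1},\hat{1})=\gm(\hat{0},\hat{1})\gm(\hat{1},\hat{1})=-1$. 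Plugging into the definition with $\rk(B_1)-\rk(\hat{0})=1$ and $\rk(B_1)-\rk(\hat{1})=0$ yields $\J(B_1,t)=(-1)^{1}\left[(-1)t^{1}+(-1)t^{0}\right]=t+1$, confirming the formula for $n=1$.

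Then I would apply the factorization together with Proposition \ref{prod-J-char}. Since $B_n \cong B_{n-1}\times B_1$, the product formula gives $\J(B_n,t)=\J(B_{n-1},t)\,\J(B_1,t)=(t+1)^{n-1}(t+1)=(t+1)^{n}$ by the inductive hypothesis, completing the induction. Alternatively one could apply Proposition \ref{prod-J-char} directly to the $n$-fold product to get $\J(B_n,t)=\J(B_1,t)^{n}=(t+1)^{n}$ in one step.

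I do not expect any serious obstacle here, since the heavy lifting is already done by the product formula and the decomposition $J=\gm\dia\gm$. The only point requiring a small amount of care is the base-case computation of the classical M\"obius values on $B_1$ and bookkeeping the sign $(-1)^{\rk(B_1)}$ correctly; everything else is a routine consequence of the multiplicativity of $\J$ over products.
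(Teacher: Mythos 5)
Your proof is correct and follows essentially the same route as the paper: compute $\J(B_1,t)=t+1$ directly, then invoke the factorization $B_n=(B_1)^n$ together with the product formula of Proposition \ref{prod-J-char}. The only cosmetic difference is that you obtain the base-case values $J(\hat{0},\hat{0},\hat{1})=J(\hat{0},\hat{1},\hat{1})=-1$ via Theorem \ref{decomp} rather than reading them off the recursive definition, which changes nothing of substance.
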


\begin{proof}

We start with $B_1$. This poset has two elements $B_1=\{0,1\}$. So, $\J(B_1,t)=(-1)(J(0,0,1)t^1+J(0,1,1)t^0)=t+1$. Then the result follows since $B_n=(B_1)^n$.\end{proof}

\begin{proposition}\label{M-boolean}

If $B_n$ is the Boolean lattice then $$\M(B_n,t)=(t+1)^n(t-1)^{2n}.$$
\end{proposition}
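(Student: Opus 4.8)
The plan is to reduce the claim about $\M(B_n,t)$ to the single-variable base case $\M(B_1,t)$ and then invoke the product formula for the $J$-M\"obius polynomial (Proposition \ref{prod-J-mob}). Since the Boolean lattice factors as $B_n=(B_1)^n$, Proposition \ref{prod-J-mob} gives $\M(B_n,t)=\M(B_1,t)^n$, so it suffices to show $\M(B_1,t)=(t+1)(t-1)^2$.

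First I would compute $\M(B_1,t)$ directly from the definition. Write $B_1=\{0,1\}$ with $\rk(0)=0$, $\rk(1)=1$, so $\rk(B_1)=1$. The elements of $\Fl^3(B_1)$ are the ordered triples $(x,y,z)$ with $x\le y\le z$, namely $(0,0,0)$, $(0,0,1)$, $(0,1,1)$, and $(1,1,1)$. For each I would record the exponent $\gr(x,y,z)=3\rk(B_1)-\rk(x)-\rk(y)-\rk(z)=3-\rk(x)-\rk(y)-\rk(z)$ and the value $J(x,y,z)=\gm(x,y)\gm(y,z)$ via Theorem \ref{decomp}. Using the standard Boolean M\"obius values $\gm(0,0)=\gm(1,1)=1$ and $\gm(0,1)=-1$, the four contributions are:
\begin{align*}
(0,0,0):\quad & J=1,\ \gr=3,\ \text{term } t^3,\\
(0,0,1):\quad & J=\gm(0,0)\gm(0,1)=-1,\ \gr=2,\ \text{term } -t^2,\\
(0,1,1):\quad & J=\gm(0,1)\gm(1,1)=-1,\ \gr=1,\ \text{term } -t,\\
(1,1,1):\quad & J=1,\ \gr=0,\ \text{term } 1.
\end{align*}
Summing gives $\M(B_1,t)=t^3-t^2-t+1$, which factors as $(t^2-1)(t-1)=(t+1)(t-1)^2$.

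Combining the two steps yields $\M(B_n,t)=\bigl((t+1)(t-1)^2\bigr)^n=(t+1)^n(t-1)^{2n}$, as claimed. There is no serious obstacle here: the product formula does all the structural work, and the only content is the four-line base-case computation. The one point to double-check is the bookkeeping of the rank exponents $\gr$ in the base case, since an off-by-one there would corrupt the factorization; but the symmetry of the values of $J$ on $B_1$ makes this easy to verify.
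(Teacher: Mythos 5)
Your proof is correct and follows essentially the same route as the paper: compute $\M(B_1,t)=t^3-t^2-t+1=(t+1)(t-1)^2$ directly from the four triples in $\Fl^3(B_1)$, then apply the product formula of Proposition \ref{prod-J-mob} to $B_n=(B_1)^n$. Your explicit tracking of the exponents $\gr(x,y,z)$ and the use of Theorem \ref{decomp} to evaluate $J$ is a slightly more detailed write-up of the same computation.
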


\begin{proof}

Again we first compute $\M(B_1,t)$. The only coefficients are $J(0,0,0)=1$, $J(0,0,1)=-1$, $J(0,1,1)=-1$ and $J(1,1,1)=1$. Then the result follows from \begin{align*}\M(B_1,t)=&J(0,0,0)t^3+J(0,0,1)t^2+J(0,1,1)t+J(1,1,1)\\
=&t^3-t^2-t+1\\
=&(t+1)(t-1)^2\\
\end{align*} and the application of Proposition \ref{prod-J-mob}.\end{proof}

\begin{proposition}\label{rank2M}
Let $\calP_n$ be a geometric lattice of rank two with $n$ atoms (rank 2 matroid with $n$ elements a.k.a. $U_{2,n}$). Then $\M(\calP_n,t)= (t^2-nt+1)(t+1)^2(t-1)^2$.
\end{proposition}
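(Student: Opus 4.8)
The plan is to compute $\M(\calP_n,t)$ directly from the definition, since $\calP_n=U_{2,n}$ is not a nontrivial direct product and so the product formula of Proposition \ref{prod-J-mob} offers no shortcut here. The first move is to invoke Theorem \ref{decomp} to rewrite $J(x,y,z)=\gm(x,y)\gm(y,z)$, which reduces everything to the classical Möbius function of $U_{2,n}$. I would begin by recording those values on the $n+2$ elements (minimum $\hat 0$, atoms $a_1,\dots,a_n$, top $\hat 1$): on the diagonal $\gm=1$; for an atom $a$ one has $\gm(\hat 0,a)=\gm(a,\hat 1)=-1$; and the only nonobvious value, $\gm(\hat 0,\hat 1)=n-1$, follows from the defining relation $\sum_{\hat 0\le x\le \hat 1}\gm(\hat 0,x)=0$, i.e. $1-n+\gm(\hat 0,\hat 1)=0$.

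The key organizing idea is to sum over the \emph{middle} coordinate $y$ first. Because $J(x,y,z)=\gm(x,y)\gm(y,z)$ couples $x$ and $z$ only through $y$, and because $\gr(x,y,z)=(2-\rk(x))+(2-\rk(y))+(2-\rk(z))$ splits additively, the defining sum factors as
\begin{equation*}
\M(\calP_n,t)=\sum_{y\in\calP_n}\Big(\sum_{x\le y}\gm(x,y)\,t^{2-\rk(x)}\Big)\,t^{2-\rk(y)}\,\Big(\sum_{z\ge y}\gm(y,z)\,t^{2-\rk(z)}\Big).
\end{equation*}
Thus for each $y$ I only need a short down-sum over $x\le y$ and a short up-sum over $z\ge y$; this is exactly the $\dia$ structure of $J=\gm\dia\gm$ made explicit at the level of polynomials.

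The final step is to evaluate the three cases $y=\hat 0$, $y$ an atom, and $y=\hat 1$ using the Möbius values above. For $y=\hat 0$ the down-sum is $t^2$ and the up-sum is $t^2-nt+(n-1)$ (the classical characteristic polynomial of $U_{2,n}$), giving $t^4\big(t^2-nt+(n-1)\big)$; for each of the $n$ atoms the down- and up-sums are $t-t^2=-t(t-1)$ and $t-1$, contributing $-t^2(t-1)^2$ apiece, hence $-n\,t^2(t-1)^2$ in total; and for $y=\hat 1$ the up-sum degenerates to $1$ while the down-sum is $(n-1)t^2-nt+1$. Adding the three contributions yields $t^6-nt^5-t^4+2nt^3-t^2-nt+1$, and it remains to verify that this equals $(t^2-nt+1)(t^2-1)^2=(t^2-nt+1)(t+1)^2(t-1)^2$, which is a direct expansion. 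The only real care needed is bookkeeping: keeping the factor of $n$ attached to the atom case and getting $\gm(\hat 0,\hat 1)=n-1$ right; everything else is mechanical, so I expect the final collection-and-factorization check to be the main (and quite minor) obstacle.
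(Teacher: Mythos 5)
Your proof is correct --- I have checked the three cases and the final expansion, and the polynomial $t^6-nt^5-t^4+2nt^3-t^2-nt+1$ does equal $(t^2-nt+1)(t+1)^2(t-1)^2$ --- but it takes a genuinely different route from the paper. The paper proves Proposition \ref{rank2M} by induction on $n$: the base case is $B_2=U_{2,2}$ via Proposition \ref{M-boolean}, and the inductive step tracks exactly which values of $J_{\calP_n}$ change when the $n$-th atom is adjoined (namely $J(\hat 0,\hat 0,\hat 1)$ and $J(\hat 0,\hat 1,\hat 1)$ each increase by $1$) together with the new terms indexed by triples containing that atom. Your argument instead sums over the middle coordinate $y$ and factors each summand as a down-sum times an up-sum, which is precisely the decomposition
$\M(L,t)=t^{\rk(L)}\sum_{y}t^{\crk(y)}\chi(L^y,t)\chi((L^{\op})^y,t^{-1})$
of Proposition \ref{M-decomp} specialized to $U_{2,n}$, made explicit before that proposition is stated. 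What your approach buys is a closed-form, non-recursive computation that only needs the (well-known) M\"obius values of $U_{2,n}$ and makes the eventual factorization transparent; what the paper's induction buys is a self-contained argument at that point in the text that illustrates how $\M$ responds to adding an atom, without appealing to the later structural decomposition. Both are valid; yours would arguably be cleaner if placed after Proposition \ref{M-decomp}.
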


\begin{proof}

We prove this by induction on $n$. The base case is $n=2$ and is given by the $n=2$ version of Proposition \ref{M-boolean}. Now assume $n>2$. The lattice $\calP_{n}$ consists of $\hat{0}$, $\hat{1}$, and $n$ atoms $\alpha_1,\ldots,\alpha_{n}$. Now $J_{\calP_n}(\hat{0},\hat{0},\hat{1})=n-1$ and $J_{\calP_n}(\hat{0},\hat{1},\hat{1})=n-1$ are the only $J_{\calP_n}$ values that do not have $\alpha_{n}$ as an entry and incorporate $\alpha_{n}$ in it's recursive definition. So, $J_{\calP_n}(\hat{0},\hat{0},\hat{1})=J_{\calP_{n-1}}(\hat{0},\hat{0},\hat{1})+1$ and similarly for $(\hat{0},\hat{1},\hat{1})$. Incorporating this difference into the calculation we get that \begin{align*}\M(\calP_n,t)=&\M(\calP_{n-1},t)+t^4+t^2+J(\hat{0},\hat{0},\ga_n)t^5+J(\hat{0},\ga_n,\ga_n)t^4+J(\hat{0},\ga_n,\hat{1})t^3\\
&+J(\ga_n,\ga_n,\ga_n)t^3+J(\ga_n,\ga_n,\hat{1})t^2+J(\ga_n,\hat{1},\hat{1})t\\
=&(t^2-(n-1)t+1)(t+1)^2(t-1)^2-(t^5-2t^3+t)\\
=&(t^2-nt+1)(t+1)^2(t-1)^2\\
\end{align*} which is the desired formula.\end{proof}

Now we consider a decomposition of $\M (L,t)$ for a finite lattice $L$. So, if $L$ is a finite lattice then $L^{\op}$ is the same underlying set as $L$ but with the order reversed (i.e. $x\leq^\op y$ in $L^\op$ if and only if $x\geq y$ in $L$). Also for $y\in L$ let $L_y=\{x\in L| x\leq y\}$ and $L^y=\{x\in L| x\geq y\}$. Now we can state the result.

\begin{proposition}\label{M-decomp}

If $L$ is a finite ranked lattice then $$\M(L,t)=t^{\rk (L)}\sum\limits_{y\in L}t^{\crk(y)}\chi(L^y,t)\chi ((L^\op)^y,t^{-1}).$$

\end{proposition}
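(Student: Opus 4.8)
The plan is to start from the definition of $\M(L,t)$ and decompose the triple flag sum according to its middle coordinate. Concretely, I would write
\begin{align*}
\M(L,t)=\sum_{y\in L}\ \sum_{\substack{(x,z)\\ x\leq y\leq z}}J(x,y,z)\,t^{\gr(x,y,z)},
\end{align*}
and then invoke Theorem \ref{decomp} to replace $J(x,y,z)$ by $\gm(x,y)\gm(y,z)$. For a fixed middle element $y$, the constraint $x\le y\le z$ forces $x\in L_y$ and $z\in L^y$, so the inner sum factors as a product of a sum over $x\in L_y$ and a sum over $z\in L^y$. The key structural observation is that $\gm(x,y)$, viewed as $x$ ranges below $y$, is precisely the M\"obius function on the interval $[x,y]$, and in the opposite poset $L^{\op}$ this interval becomes an interval in $(L^{\op})^y$; this is what will let me recognize the two factors as characteristic polynomials of $L^y$ and $(L^{\op})^y$.

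Next I would carefully track the exponents. Since $\gr(x,y,z)=3\rk(L)-\rk(x)-\rk(y)-\rk(z)$, I would split off the part depending only on $y$: writing $\rk(L)-\rk(x)=\crk(x)$ and similarly for $z$, one gets $\gr(x,y,z)=\crk(y)+\bigl(\rk(L)-\rk(x)\bigr)+\bigl(\rk(L)-\rk(z)\bigr)$, which after the substitutions aligns with the $t^{\rk(L)}$ and $t^{\crk(y)}$ prefactors in the target formula. The sum over $z\in L^y$ of $\gm(y,z)t^{\,\rk(L)-\rk(z)}$ should, up to a sign, be exactly the defining sum for $\chi(L^y,t)$; here I would use that in the upper interval lattice $L^y$ the element $y$ plays the role of $\hat 0$ and $\hat 1=\hat 1_L$, so $\rk_{L^y}(z)=\rk(z)-\rk(y)$ and $\rk(L^y)=\rk(L)-\rk(y)=\crk(y)$.

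The most delicate step, and the main obstacle, is handling the sum over $x\in L_y$ and identifying it with $\chi\bigl((L^{\op})^y,t^{-1}\bigr)$. Passing to $L^{\op}$ reverses order, so $L_y$ (elements below $y$ in $L$) becomes $(L^{\op})^y$ (elements above $y$ in $L^{\op}$), and $\gm_L(x,y)=\gm_{L^{\op}}(y,x)$ since the M\"obius function is invariant under order reversal. The factor $t^{\,\rk(L)-\rk(x)}$ must be matched to the characteristic polynomial of $(L^{\op})^y$ evaluated at $t^{-1}$; the rank function in $L^{\op}$ runs oppositely, so $\rk_{L^{\op}}$ differences turn $t^{\crk(x)}$ into a power of $t^{-1}$ in $(L^{\op})^y$, which is exactly why the second factor carries the argument $t^{-1}$. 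I would verify the bookkeeping of the leading signs $(-1)^{\rk(\cdot)}$ in the two characteristic polynomials cancels against the signs introduced by the two $\gm$ factors, and confirm the residual $t^{\rk(L)}t^{\crk(y)}$ prefactor matches. Once the exponent and sign accounting is pinned down, the statement follows by summing the factored product over all $y\in L$.
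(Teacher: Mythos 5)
Your proposal follows essentially the same route as the paper's proof: split the triple-flag sum by the middle coordinate $y$, apply Theorem \ref{decomp} to factor $J(x,y,z)=\gm(x,y)\gm(y,z)$, identify the $z$-sum with $\chi(L^y,t)$, and convert the $x$-sum into $t^{\rk(L)}\chi((L^\op)^y,t^{-1})$ via $\gm^\op(y,x)=\gm(x,y)$ and the rank bookkeeping in $L^\op$. One small caveat: the standard characteristic polynomial $\chi(L,t)=\sum_x\gm(\hat0,x)t^{\rk(L)-\rk(x)}$ carries no leading $(-1)^{\rk(\cdot)}$ factor, so the sign cancellation you propose to verify is vacuous --- the exponent accounting alone closes the argument.
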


\begin{proof}

First we note that for $x\leq y\in L$ the M\"obius function on $L^\op$ has $\gm^\op (y,x)=\gm (x,y)$ and that rank is corank in $L^\op$. Then again using Theorem \ref{decomp} we compute \begin{align*} \M (L,t)=&\sum\limits_{(x,y,z)\in\Fl^3(\calP)}J(x,y,z)t^{\gr (x,y,z)}\\
=&\sum\limits_{y\in L}\sum\limits_{x\leq y}\sum\limits_{z\geq y}\gm(x,y)\gm(y,z)t^{\crk(x)+\crk(y)+\crk(z)}\\
=&\sum\limits_{y\in L}t^{\crk(y)}\sum\limits_{x\leq y}\gm(x,y)t^{\crk (x)}\sum\limits_{z\geq y}\gm(y,z)t^{\crk(z)}\\
=&\sum\limits_{y\in L}t^{\crk(y)}\chi(L^y ,t)\sum\limits_{x\leq y}\gm(x,y)t^{\rk (L)-\rk (x)}\\
=&\sum\limits_{y\in L}t^{\crk(y)}\chi(L^y ,t)t^{\rk(L)}\sum\limits_{x\geq^\op y}\gm^\op(y,x)t^{-\rk (x)}\\
=&t^{\rk(L)}\sum\limits_{y\in L}t^{\crk(y)}\chi(L^y ,t)\chi ((L^\op)^y,t^{-1}).
\end{align*} \end{proof}

We can use Proposition \ref{M-decomp} to compute $\M(\calP,t)$ for cases where $\chi (\calP,t)$ is well known. Let $L_n^q$ be the modular lattice of all subspaces in $\F_q^n$, a vector space of dimension $n$ over a field with $q$ elements. The M\"obius function and the characteristic polynomial of $L_q^n$ are well known.

\begin{proposition}[Proposition 7.5.3 \cite{Zas87}]\label{q-basic}

In $L_q^n$ we have  $$\gm(\hat{0},\hat{1})= (-1)^nq^{{n\choose 2}}$$ and $$\chi(L_q^n,t)=\prod\limits_{i=0}^{n-1}(t-q^i).$$

\end{proposition}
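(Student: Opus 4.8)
The plan is to reduce both formulas to a single application of the $q$-binomial theorem. The starting observation is that for any subspace $x\in L_q^n$ with $\dim x=k$, the interval $[\hat{0},x]$ is isomorphic as a lattice to $L_q^k$; hence $\gm(\hat{0},x)$ depends only on $k$, and I write $\gm_k$ for this common value. Since the number of $k$-dimensional subspaces of $\F_q^n$ is the Gaussian binomial coefficient $\qbin{n}{k}$, grouping the defining relation $\sum_{\hat{0}\le x\le \hat{1}}\gm(\hat{0},x)=\gd(\hat{0},\hat{1})$ by dimension yields, for every $n\ge 1$, the recursion
$$\sum_{k=0}^{n}\qbin{n}{k}\gm_k=0.$$

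Next I would verify that $\gm_k=(-1)^kq^{\binom{k}{2}}$ solves this recursion by induction on $n$; since $\gm_0=1$ and the leading term $\qbin{n}{n}=1$ determines $\gm_n$ uniquely from the lower values, it suffices to check that the claimed closed form satisfies the displayed identity. This is exactly the $t=1$ specialization of the $q$-binomial theorem
$$\prod_{i=0}^{n-1}(t-q^i)=\sum_{k=0}^{n}(-1)^kq^{\binom{k}{2}}\qbin{n}{k}t^{n-k},$$
because at $t=1$ the factor indexed by $i=0$ is $1-q^0=0$, forcing the product, and hence the sum, to vanish for $n\ge 1$. This $q$-binomial identity is the one genuine ingredient of the argument and the only real obstacle; everything else is bookkeeping. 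In particular, taking $k=n$ gives $\gm(\hat{0},\hat{1})=\gm_n=(-1)^nq^{\binom{n}{2}}$.

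Finally, for the characteristic polynomial I would substitute the now-known values $\gm(\hat{0},x)=\gm_{\dim x}$ into the definition and again group the sum by dimension:
$$\chi(L_q^n,t)=\sum_{x\in L_q^n}\gm(\hat{0},x)t^{n-\dim x}=\sum_{k=0}^{n}\qbin{n}{k}(-1)^kq^{\binom{k}{2}}t^{n-k}.$$
The right-hand side is precisely the expansion of $\prod_{i=0}^{n-1}(t-q^i)$ furnished by the same $q$-binomial theorem, which finishes the proof. One may equally read off $\gm(\hat{0},\hat{1})$ as the constant term $\prod_{i=0}^{n-1}(-q^i)=(-1)^nq^{\binom{n}{2}}$ of this product, giving a second route to the first formula.
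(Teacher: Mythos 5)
Your argument is correct, but note that the paper does not prove this proposition at all: it is quoted as Proposition 7.5.3 of \cite{Zas87} and used as a known input, so there is no in-paper proof to compare against. Your derivation is the standard self-contained one and every step checks out: the isomorphism $[\hat{0},x]\cong L_q^{\dim x}$ reduces everything to the numbers $\gm_k$, the defining relation of $\gm$ becomes the recursion $\sum_{k=0}^{n}\qbin{n}{k}\gm_k=0$ for $n\ge 1$ (which determines $\gm_n$ uniquely since $\qbin{n}{n}=1$), and the closed form $\gm_k=(-1)^kq^{{k\choose 2}}$ is verified as the $t=1$ specialization of the $q$-binomial theorem $\prod_{i=0}^{n-1}(t-q^i)=\sum_{k=0}^{n}(-1)^kq^{{k\choose 2}}\qbin{n}{k}t^{n-k}$; the same identity then packages the sum defining $\chi(L_q^n,t)$ into the product. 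The one caveat is your closing ``second route'': reading $\gm(\hat{0},\hat{1})$ off as the constant term of $\prod_{i=0}^{n-1}(t-q^i)$ is not an independent derivation as written, because your identification of $\chi(L_q^n,t)$ with that product already used the value $\gm_n$; it would only become independent if the product formula for $\chi$ were first obtained by other means (e.g.\ the finite-field counting method). This is a presentational point and does not affect the validity of the main argument. Incidentally, the vanishing identity you exploit at $t=1$ is a close cousin of the $q$-series identity the paper proves from scratch in Lemma \ref{q-john}, so your route fits naturally alongside the techniques already used in Section \ref{chara-mob-sec}.
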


Using this we can get a nice formulation for $\M (L_q^n,t)$. First we need to recall so terminology from $q$-series. Let $$\qbin{n}{k} =\frac{(q^n-1)\cdots (q-1)}{(q^k-1)\cdots (q-1)\cdot (q^{n-k}-1)\cdots (q-1)}$$ be the q-binomial coefficient (aka Gaussian coefficient). Also, we denote by $$\qbin{n}{k_1,k_2,\dots,k_m}=\qbin{n}{k_1}\qbin{n-k_1}{k_2}\cdots \qbin{n-(k_1+\cdots k_{m-1})}{k_m}$$ as the q-multinomial coefficient. We also use the q-Pochhammer symbol $$(a;q)_n=\prod\limits_{i=0}^{n-1} (1-aq^i).$$ We use \cite{And86} for a general reference for $q$-series. Using Proposition \ref{q-basic} we get the following.

\begin{proposition}\label{q-M-poly1}

If $L_q^n$ is the modular lattice of subspaces of $\F_q^n$ then $$\M(L_q^n,t)=\sum\limits_{0\leq i\leq j\leq k\leq n}(-1)^{k-i}\qbin{n}{i,j-i,k-j,n-k} q^{{j-i\choose 2}+{k-j\choose 2}}t^{3n -i-j-k}.$$

\end{proposition}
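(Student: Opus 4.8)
The plan is to reduce everything to the factorization $J(x,y,z)=\gm(x,y)\gm(y,z)$ supplied by Theorem \ref{decomp}, and then to organize the defining sum for $\M(L_q^n,t)$ according to the dimensions of the three subspaces. Writing $i=\rk(x)$, $j=\rk(y)$, $k=\rk(z)$ for a flag $(x,y,z)\in\Fl^3(L_q^n)$, the exponent $\gr(x,y,z)=3n-\rk(x)-\rk(y)-\rk(z)$ becomes $3n-i-j-k$, so I would first rewrite
\begin{align*}
\M(L_q^n,t)=\sum_{0\le i\le j\le k\le n}\left(\sum_{\substack{(x,y,z)\in\Fl^3(L_q^n)\\ \rk(x)=i,\,\rk(y)=j,\,\rk(z)=k}}\gm(x,y)\gm(y,z)\right)t^{3n-i-j-k}.
\end{align*}

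The decisive observation is that the inner summand depends only on $i,j,k$ and not on the particular subspaces. Indeed, for $x\le y$ in $L_q^n$ the interval $[x,y]$ is isomorphic to the subspace lattice $L_q^{j-i}$ of the quotient $y/x$, so Proposition \ref{q-basic} gives $\gm(x,y)=(-1)^{j-i}q^{{j-i\choose 2}}$ and likewise $\gm(y,z)=(-1)^{k-j}q^{{k-j\choose 2}}$. Multiplying these and simplifying the sign $(-1)^{(j-i)+(k-j)}=(-1)^{k-i}$ yields the constant value $(-1)^{k-i}q^{{j-i\choose 2}+{k-j\choose 2}}$ for every flag of the prescribed dimension type. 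Thus each inner sum equals this constant times the number of such flags.

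The remaining step is to count the flags $x\le y\le z$ with $\dim x=i$, $\dim y=j$, $\dim z=k$ in $\F_q^n$. I would build the flag in stages: there are $\qbin{n}{i}$ choices for $x$; given $x$, the subspaces $y\supseteq x$ of dimension $j$ correspond to $(j-i)$-dimensional subspaces of $\F_q^n/x\cong\F_q^{n-i}$, giving $\qbin{n-i}{j-i}$ choices; and given $y$, there are $\qbin{n-j}{k-j}$ choices for $z$. Since $\qbin{n-k}{n-k}=1$, the product telescopes into the $q$-multinomial coefficient $\qbin{n}{i,j-i,k-j,n-k}$ exactly as defined in the excerpt. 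Substituting this count together with the constant M\"obius value into the reorganized sum produces the claimed formula term by term.

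The argument is essentially bookkeeping once Theorem \ref{decomp} and Proposition \ref{q-basic} are in hand, so I do not anticipate a serious obstacle. The one point requiring care is the identification of the interval $[x,y]$ with $L_q^{j-i}$, which is what makes the M\"obius values depend only on the dimension gaps and thereby lets the triple sum collapse to a sum over dimension types; verifying the $q$-multinomial count, including the convention that the trailing block $n-k$ contributes the trivial factor $\qbin{n-k}{n-k}=1$, is the only genuinely computational piece.
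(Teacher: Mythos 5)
Your proof is correct and follows essentially the same route as the paper's (much terser) argument: factor $J=\gm\dia\gm$ via Theorem \ref{decomp}, evaluate the M\"obius values with Proposition \ref{q-basic} using the interval isomorphism $[x,y]\cong L_q^{j-i}$, and count flags of each dimension type with the $q$-multinomial coefficient. The bookkeeping, including the sign $(-1)^{k-i}$ and the trivial trailing factor $\qbin{n-k}{n-k}=1$, checks out.
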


\begin{proof}

Use that $\qbin{n}{k}$ counts the number of subspaces of dimension $k$ in $\F_q^n$ and apply Proposition \ref{decomp} to $J$ in $\M(L_q^n,t)$ together with Proposition \ref{q-basic}. \end{proof}

Now we can reformulate Proposition \ref{q-M-poly1} using Proposition \ref{M-decomp} together with Proposition \ref{q-basic} to get a nice identity in q-series.

\begin{proposition}\label{q-M-decomped}

If $L_q^n$ is the modular lattice of subspaces of $\F_q^n$ then $$\M(L_q^n,t)=t^n\sum\limits_{0\leq  k\leq n}t^{n-k}\qbin{n}{k} \prod_{i=0}^{n-k-1}(t-q^i)\prod_{j=0}^{k-1}(t-q^j).$$

\end{proposition}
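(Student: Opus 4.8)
The plan is to specialize the decomposition in Proposition \ref{M-decomp} to $L=L_q^n$ and to evaluate the two characteristic-polynomial factors by means of Proposition \ref{q-basic}. Since $\rk(L_q^n)=n$, I would group the outer sum over $y$ according to $k=\rk(y)=\dim y$: there are exactly $\qbin{n}{k}$ subspaces of each dimension $k$, and each has $\crk(y)=n-k$. Because $\chi$ depends only on the isomorphism type of a ranked lattice, the entire computation reduces to identifying the intervals $L^y$ and $(L^\op)^y$ up to rank-preserving isomorphism.

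First I would handle $L^y=[y,\hat{1}]$. The interval of subspaces containing $y$ is isomorphic to the subspace lattice of the quotient $\F_q^n/y\cong\F_q^{n-k}$, so $L^y\cong L_q^{n-k}$, and Proposition \ref{q-basic} gives $\chi(L^y,t)=\prod_{i=0}^{n-k-1}(t-q^i)$. Next I would treat $(L^\op)^y$, which is the interval $[\hat{0},y]$ carrying the reversed order, i.e. $([\hat{0},y])^\op$. Here $[\hat{0},y]\cong L_q^k$, and the key structural input is that the subspace lattice is self-dual: orthogonal complementation $W\mapsto W^\perp$ is a rank-preserving isomorphism $(L_q^k)^\op\to L_q^k$. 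Hence $(L^\op)^y\cong L_q^k$ and $\chi((L^\op)^y,s)=\prod_{j=0}^{k-1}(s-q^j)$, with this factor to be read at $s=t^{-1}$. Substituting the three ingredients into Proposition \ref{M-decomp} and pulling out the overall $t^{\rk(L)}=t^n$ then collects the $k$-th summand as $t^{n-k}\qbin{n}{k}\prod_{i=0}^{n-k-1}(t-q^i)\,\chi(L_q^k,t^{-1})$.

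The main obstacle is the evaluation of the opposite-interval factor at $t^{-1}$: Proposition \ref{M-decomp} forces the second characteristic polynomial to be taken at the reciprocal argument, so the literal specialization produces $\prod_{j=0}^{k-1}(t^{-1}-q^j)$, and one must track carefully how the powers of $t$ distribute before matching the claimed product. I would confirm the resulting closed form against the fully expanded expression of Proposition \ref{q-M-poly1}, re-deriving the coefficient of each monomial $t^{3n-i-j-k}$ through the $q$-multinomial identity $\qbin{n}{j}\qbin{j}{i}\qbin{n-j}{k-j}=\qbin{n}{i,j-i,k-j,n-k}$ for $i\le j\le k$. As a final consistency check I would run the smallest case $n=1$, where $\M(L_q^1,t)=\M(B_1,t)=(t+1)(t-1)^2$ by Proposition \ref{M-boolean}; this isolates precisely whether the reciprocal substitution has been recorded correctly in the stated second product.
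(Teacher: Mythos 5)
Your strategy is exactly the one the paper intends (it offers no written proof of this Proposition beyond citing Propositions \ref{M-decomp} and \ref{q-basic}), and every structural ingredient you assemble is right: grouping the sum by $k=\dim y$ with $\qbin{n}{k}$ subspaces in each class, the isomorphism $L^y\cong L_q^{n-k}$, and the self-duality $(L_q^k)^{\op}\cong L_q^k$ identifying $(L^\op)^y$ with $L_q^k$. However, the obstacle you flag at the end is not a bookkeeping detail that careful tracking of powers of $t$ will resolve --- it is an actual discrepancy with the statement as printed. The literal specialization of Proposition \ref{M-decomp} yields
$$\M(L_q^n,t)=t^n\sum_{k=0}^{n}t^{n-k}\qbin{n}{k}\prod_{i=0}^{n-k-1}(t-q^i)\prod_{j=0}^{k-1}(t^{-1}-q^j),$$
and $\prod_{j=0}^{k-1}(t^{-1}-q^j)=t^{-k}\prod_{j=0}^{k-1}(1-tq^j)$ is not equal to $\prod_{j=0}^{k-1}(t-q^j)$ once $k\geq 1$. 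Your own proposed $n=1$ sanity check detects this: the right-hand side of the Proposition as stated evaluates to $t\bigl(t(t-1)+(t-1)\bigr)=t^3-t$, whereas $\M(L_q^1,t)=\M(B_1,t)=(t+1)(t-1)^2=t^3-t^2-t+1$; the two already disagree at $t=0$. So no redistribution of powers of $t$ can turn the reciprocal factor into the claimed one, and what your argument actually proves is the displayed identity above (equivalently, $\M(L_q^n,t)=\sum_{k}t^{2(n-k)}\qbin{n}{k}\prod_{i=0}^{n-k-1}(t-q^i)\prod_{j=0}^{k-1}(1-tq^j)$).

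You should therefore conclude that the second product in Proposition \ref{q-M-decomped} must be taken at $t^{-1}$, and you can add that this correction is harmless for the one place the Proposition is used: Proposition \ref{q-root} sets $t=-1$, where $t=t^{-1}$, so both versions give $(-1)^n\sum_{k}(-1)^k\qbin{n}{k}(-1;q)_{n-k}(-1;q)_k$ and Lemma \ref{q-john} applies unchanged. Your planned cross-check against Proposition \ref{q-M-poly1} via the $q$-multinomial identity is a sound way to confirm the corrected form, but as written the proposal cannot terminate in the identity that is stated.
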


It turns out that $-1$ is a root of $\M(L_q^n,t)$. We need a few results in oder to prove this. First we present a formula or $q-$identity which seems to be a kind of $q-$ generalized binomial theorem (the authors could not find it in the literature). It's interesting that in the odd case the sum trivially collapses but not for the even case.  

\begin{lemma}\label{q-john}

If $n>0$ then $$\sum\limits_{k=0}^n (-1)^k \qbin{n}{k} (-1:q)_{n-k}(-1;q)_k=0.$$

\end{lemma}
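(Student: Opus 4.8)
The plan is to recognize the sum as a Cauchy product of two $q$-series whose generating functions are mutual reciprocals, so that the product collapses to the constant $1$. First I would divide through by $(q;q)_n$ and use $\qbin{n}{k}=\frac{(q;q)_n}{(q;q)_k(q;q)_{n-k}}$ to see that the claim is equivalent to
$$\sum_{k=0}^n (-1)^k \frac{(-1;q)_k}{(q;q)_k}\cdot \frac{(-1;q)_{n-k}}{(q;q)_{n-k}} = 0 \qquad (n>0),$$
since multiplying back by $(q;q)_n$ recovers the original statement verbatim. The left-hand side is precisely the coefficient of $z^n$ in the product of the two formal power series $A(z)=\sum_{k\ge 0}\frac{(-1;q)_k}{(q;q)_k}(-z)^k$ and $B(z)=\sum_{m\ge 0}\frac{(-1;q)_m}{(q;q)_m}z^m$.

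The key step is Cauchy's $q$-binomial theorem, $\sum_{k\ge 0}\frac{(a;q)_k}{(q;q)_k}w^k=\frac{(aw;q)_\infty}{(w;q)_\infty}$, where $(a;q)_\infty=\prod_{i\ge 0}(1-aq^i)$. Applying it with $a=-1$ and $w=z$ gives $B(z)=\frac{(-z;q)_\infty}{(z;q)_\infty}$, while applying it with $a=-1$ and $w=-z$ gives $A(z)=\frac{(z;q)_\infty}{(-z;q)_\infty}$. Hence $A(z)B(z)=\frac{(z;q)_\infty}{(-z;q)_\infty}\cdot\frac{(-z;q)_\infty}{(z;q)_\infty}=1$ identically, so the coefficient of $z^n$ vanishes for every $n>0$, which is exactly the displayed identity.

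The only thing requiring care is the bookkeeping of the sign and the substitution $w\mapsto -z$ that turns $A(z)$ into the reciprocal of $B(z)$; I expect this to be the crux but entirely routine once the two series are correctly identified. One should read $A$ and $B$ as formal power series in $z$ with coefficients in $\mathbb{Z}[[q]]$ (equivalently, as analytic functions for $|z|<1$, $|q|<1$), so that extracting coefficients from the identity $A(z)B(z)=1$ is justified. A purely combinatorial alternative would construct a sign-reversing involution on the pairs arising from expanding the two $(-1;q)$ factors via the Gauss binomial theorem, but the reciprocal-generating-function argument is by far the most economical route to the result.
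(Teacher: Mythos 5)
Your proof is correct, and it takes a genuinely different route from the paper's. The paper normalizes the sum by its last term, setting $S(n)=\sum_{k=0}^{n-1}(-1)^k\qbin{n}{k}\frac{(-1;q)_{n-k}(-1;q)_k}{(-1;q)_n}$, derives the first-order recurrence $(1+q^{n-1})S(n)=(-1)^{n-1}2q^{n-1}+(q^{n-1}-1)S(n-1)$ via the $q$-Pascal rule $\qbin{n}{k}=q^k\qbin{n-1}{k}+\qbin{n-1}{k-1}$ (with computer-algebra assistance in the style of Paule--Riese), and concludes $S(n)=(-1)^{n-1}$ by induction, which is equivalent to the vanishing of the full sum. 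You instead divide by $(q;q)_n$ to expose the Cauchy-convolution structure and apply the $q$-binomial theorem $\sum_{k\ge 0}\frac{(a;q)_k}{(q;q)_k}w^k=\frac{(aw;q)_\infty}{(w;q)_\infty}$ twice, at $w=z$ and $w=-z$ with $a=-1$, to identify the two factors as the mutually reciprocal series $\frac{(-z;q)_\infty}{(z;q)_\infty}$ and $\frac{(z;q)_\infty}{(-z;q)_\infty}$; the product is identically $1$ and all positive-degree coefficients vanish. I checked the bookkeeping: the substitution $w=-z$ does send $(aw;q)_\infty$ to $(z;q)_\infty$, the $n=0$ coefficient correctly comes out to $1$ rather than $0$, and the formal-power-series reading (both infinite products are units since their constant terms are $1$) is legitimate. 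Your argument is shorter and more conceptual --- it explains the identity as literally the statement that a series and its reciprocal convolve to $\delta_{n,0}$, and it answers the authors' own remark that the identity ``seems to be a kind of $q$-generalized binomial theorem'': it is a direct corollary of Cauchy's $q$-binomial theorem. What the paper's approach buys in exchange is self-containedness: it stays entirely within finite sums and elementary induction, needing no infinite products and no appeal to a classical summation theorem.
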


\begin{proof}

Let $$S(n)=\sum\limits_{k=0}^{n-1}(-1)^k\qbin{n}{k} \frac{(-1;q)_{n-k}(-1;q)_k}{(-1;q)_n}$$ which is the left hand side up to the $n-1$ term divided by the $n^{th}$ term. Using techniques from \cite{PaRi95} and Mathematica  \cite{Mathematica} we build a recursion for $S(n)$. We compute 
\begin{align*} (1+q^{n-1})S(n)=& \sum\limits_{k=0}^{n-1} (-1)^k \left( q^k\qbin{n-1}{k}+\qbin{n-1}{k-1}\right)  \frac{(-1;q)_{n-k}(-1;q)_k}{(-1;q)_{n-1}}\\
=&  \sum\limits_{k=0}^{n-1}(-1)^k\qbin{n-1}{k}  \frac{(-1;q)_{n-k-1}(-1;q)_k}{(-1;q)_{n-1}}(q^k+q^{n-1}) \\
&+\sum\limits_{k=1}^{n-1} (-1)^k\qbin{n-1}{k-1}  \frac{(-1;q)_{n-k}(-1;q)_k}{(-1;q)_{n-1}}\\
=& (-1)^{n-1} 2 q^{n-1} +  \sum\limits_{k=0}^{n-2}(-1)^k\qbin{n-1}{k}  \frac{(-1;q)_{n-k-1}(-1;q)_k}{(-1;q)_{n-1}}q^k\\
&+\sum\limits_{k=0}^{n-2}(-1)^k\qbin{n-1}{k}  \frac{(-1;q)_{n-k-1}(-1;q)_k}{(-1;q)_{n-1}}q^{n-1}\\
&+\sum\limits_{k=0}^{n-2}(-1)^{k+1}\qbin{n-1}{k} \frac{(-1;q)_{n-(k+1)}(-1;q)_{k+1}}{(-1;q)_{n-1}}\\
=& (-1)^{n-1} 2 q^{n-1} +  \sum\limits_{k=0}^{n-2}(-1)^k\qbin{n-1}{k}  \frac{(-1;q)_{n-k-1}(-1;q)_k}{(-1;q)_{n-1}}q^k\\
&+q^{n-1} S(n-1)-\sum\limits_{k=0}^{n-2}(-1)^{k}\qbin{n-1}{k} \frac{(-1;q)_{n-k-1}(-1;q)_{k}}{(-1;q)_{n-1}}(1+q^k)\\
=& (-1)^{n-1} 2 q^{n-1} +q^{n-1} S(n-1) -S(n-1) .\\
\end{align*} Now we prove with induction that $S(n)=(-1)^{n-1}$. First we see that $S(1)=1$. Then using the recursion above we have $$(1+q^{n-1})S(n)=(-1)^{n-1} 2 q^{n-1} -q^{n-1} (-1)^{n-1} +(-1)^{n-1}=(-1)^{n-1}(q^{n-1}+1) $$ which finishes the proof.\end{proof}

\begin{proposition}\label{q-root}

If $L_q^n$ is the modular lattice of subspaces of $\F_q^n$ then $\M(L_q^n,-1)=0$.

\end{proposition}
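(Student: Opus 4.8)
The plan is to evaluate $\M(L_q^n,t)$ at $t=-1$ using the decomposed expression from Proposition \ref{q-M-decomped} and to recognize the resulting alternating sum as exactly the $q$-identity established in Lemma \ref{q-john}. Once the products are rewritten in terms of $q$-Pochhammer symbols, the proposition reduces to careful sign bookkeeping and a direct appeal to the lemma, so the genuine content has already been isolated into Lemma \ref{q-john}.

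First I would substitute $t=-1$ into Proposition \ref{q-M-decomped} to obtain
$$\M(L_q^n,-1)=(-1)^n\sum_{0\leq k\leq n}(-1)^{n-k}\qbin{n}{k}\prod_{i=0}^{n-k-1}(-1-q^i)\prod_{j=0}^{k-1}(-1-q^j).$$
The key observation is that each product collapses to a $q$-Pochhammer symbol: since $(-1;q)_m=\prod_{i=0}^{m-1}(1+q^i)$ by definition, pulling a factor of $-1$ out of each term gives $\prod_{i=0}^{n-k-1}(-1-q^i)=(-1)^{n-k}(-1;q)_{n-k}$ and likewise $\prod_{j=0}^{k-1}(-1-q^j)=(-1)^{k}(-1;q)_{k}$. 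Substituting these in and tracking the exponent of $-1$, the total sign attached to the $k$-th term is $(-1)^{n+2(n-k)+k}=(-1)^{3n-k}=(-1)^{n+k}$, so that
$$\M(L_q^n,-1)=(-1)^n\sum_{0\leq k\leq n}(-1)^{k}\qbin{n}{k}(-1;q)_{n-k}(-1;q)_{k}.$$
By Lemma \ref{q-john} the sum on the right vanishes for every $n>0$, whence $\M(L_q^n,-1)=0$, which is the desired conclusion. (For $n=0$ the lattice is a single point and $\M(L_q^0,-1)=1\neq 0$, so the hypothesis $n>0$ is genuinely required; this is precisely the hypothesis under which Lemma \ref{q-john} applies.)

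The only real obstacle is the second step: the match with Lemma \ref{q-john} is exact, so a single stray factor of $-1$ in converting the two products or in collecting the powers of $t$ would break the argument. For this reason I would write out the sign exponent $n+2(n-k)+k$ explicitly rather than simplifying in one stroke, and double-check it against the even/odd parity of $3n-k$ before invoking the lemma. Everything beyond this is a mechanical substitution.
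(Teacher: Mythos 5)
Your proposal is correct and is exactly the paper's argument: the paper's proof of this proposition is the one-line instruction to evaluate Proposition \ref{q-M-decomped} at $t=-1$ and apply Lemma \ref{q-john}, and you have simply carried out that substitution and sign bookkeeping in full (correctly, including the conversion of the products to $(-1;q)_{n-k}$ and $(-1;q)_k$ and the net sign $(-1)^{n+k}$).
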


\begin{proof}

Evaluate the expression in Proposition \ref{q-M-decomped} and apply Lemma \ref{q-john}. \end{proof}


%

%
%
%
%
%

Now we can prove the main result of this section.

\begin{theorem}\label{modto-1}

If $L$ is a modular geometric lattice (modular matroid) then $\M(L,-1)=0$.

\end{theorem}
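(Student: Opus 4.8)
The plan is to reduce the statement for a general modular geometric lattice to the structure theorem for such lattices, and then invoke the results already established for the two building-block families. Recall that a modular geometric lattice decomposes as a product of its \emph{connected components}, and by the classification of modular geometric lattices (due to Birkhoff) each connected component is either a Boolean lattice $B_1$ (equivalently a rank-one lattice), a rank-two lattice $U_{2,m}$ for some $m\geq 2$, or a projective geometry lattice $L_q^k$ (subspaces of $\F_q^k$ for a prime power $q$ and $k\geq 3$). More precisely every finite modular geometric lattice is a direct product $L\cong L_1\times L_2\times\cdots\times L_r$ where each $L_i$ is one of these irreducible modular geometric lattices. This is the structural input I would cite.

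The key steps, in order, are as follows. First I would invoke Proposition \ref{prod-J-mob} so that the $J$-M\"obius polynomial is multiplicative: $\M(L,t)=\prod_{i=1}^r\M(L_i,t)$. Consequently it suffices to show that $\M(L_i,-1)=0$ for at least one irreducible factor $L_i$, since a single vanishing factor forces the product to vanish. Second, I would observe that since $L$ is a \emph{geometric} lattice it is non-trivial and therefore has at least one irreducible factor of positive rank; any such factor is one of $B_1$, $U_{2,m}$, or $L_q^k$ with $k\geq 2$ (absorbing the rank-two case $U_{2,m}$ and noting $L_q^2=U_{2,q+1}$). Third, I would dispatch each possible irreducible type: for a Boolean factor $B_1$, Proposition \ref{M-boolean} gives $\M(B_1,t)=(t+1)(t-1)^2$, which vanishes at $t=-1$; for a rank-two factor $U_{2,m}=\calP_m$, Proposition \ref{rank2M} gives $\M(\calP_m,t)=(t^2-mt+1)(t+1)^2(t-1)^2$, which again vanishes at $t=-1$; and for a projective factor $L_q^k$, Proposition \ref{q-root} gives exactly $\M(L_q^k,-1)=0$. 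In every case at least one factor vanishes at $t=-1$, so $\M(L,-1)=\prod_i\M(L_i,-1)=0$.

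The main obstacle I expect is \emph{not} the computation, which is entirely handled by the three propositions cited above, but rather pinning down and correctly invoking the classification of modular geometric lattices so that every irreducible factor really is one of the three families for which we already know the answer. One must be careful that the decomposition into irreducibles is a product decomposition in the sense compatible with Proposition \ref{prod-J-mob} (i.e.\ a direct product of lattices inducing the product poset structure), and that the rank-one, rank-two, and higher-rank projective cases genuinely exhaust the irreducible modular geometric lattices. A secondary subtlety is the edge case where $L$ itself is trivial or has a single non-trivial irreducible factor, but the hypotheses force $L$ to have positive rank, so at least one honest factor from the above list is present and the argument goes through. With the classification in hand, the proof is a clean two-line consequence of multiplicativity together with the already-proved base cases.
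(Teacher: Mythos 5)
Your proposal is correct and follows essentially the same route as the paper: decompose the modular geometric lattice into a product of irreducible factors, apply the multiplicativity of $\M$ from Proposition \ref{prod-J-mob}, and check that each factor vanishes at $t=-1$ using Propositions \ref{M-boolean} and \ref{q-root}. If anything you are slightly more careful than the paper's one-line proof, which cites only the Boolean and $L_q^n$ cases: lines $U_{2,m}$ with $m-1$ not a prime power are also irreducible modular geometric factors, and your explicit appeal to Proposition \ref{rank2M} covers them.
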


\begin{proof}

Use the classical result that a modular geometric lattice is product of Boolean and projective spaces (see 12.1 Theorem 4 in \cite{W76} or Proposition 6.9.1 in \cite{Oxley}). Then the result follows from Propositions \ref{q-root}, \ref{M-boolean}, and \ref{prod-J-mob}.\end{proof}

\begin{remark} The proof of Theorem \ref{modto-1} is done in cases. It would be interesting if there was a case free proof just using the modular property. \end{remark}

\begin{remark} At first when looking at examples of $\M$ on matroids it seems that the converse of Theorem \ref{modto-1} might be true. However, the converse is false, but the example seems rather special. Using the SageMath computer algebra system \cite{sage} we compute $$\M(M^*(K_{3,3}),t)=(t^{10} - 9t^9 + 22t^8 + 12t^7 - 81t^6 + 21t^5 + 69t^4 - 18t^3 -
34t^2 + 15t - 1)(t + 1)(t - 1)$$ where $M^*(K_{3,3})$ is the dual matroid of the graphic matroid corresponding to the complete bipartite graph $K_{3,3}$. Since $M^*(K_{3,3})$ is a connected non-modular matroid (it does not have a modular direct summand) this example gives a connected non-modular matroid that has $-1$ as a root of $\M$. This example and Theorem \ref{modto-1} leads to a few questions.\end{remark}

\begin{question} Is there a rank 3 non-modular connected matroid $M$ such that $\M(M,-1)=0 $?\end{question}

\begin{question}  Is there a classification of all matroids who's $\M$ polynomial has -1 as a root?\end{question}

\begin{question} Is there a nice enumerative combinatorial interpretation for $\M(M,-1)$ where $M$ is a matroid (i.e. what does it count)?\end{question}

\subsection{No Deletion-Contraction} We now show that $\J$ and $\M$ are not some evaluation of the Tutte polynomial for matroids. We first recall the following definition.

\begin{definition}\label{del/con}

We say that a function $f$ from matroids to a ring $R$ is a \emph{generalized Tutte-Grothendieck invariant} (following \cite{Ardila-15} Sec 1.8.6) if there exists $a,b\in R$ such that for every matroid $M$ and element of the ground set $e\in M$ $$f(M)=\left\{ \begin{array}{ll}f(M\bs e)f(L) & \text{ if } L \text{ is a loop}\\
 f(M/ e)f(c) & \text{ if } c \text{ is a coloop}\\
 af(M\bs e)+bf(M/e) &\text{ otherwise.}\end{array}\right.$$
 
\end{definition}

Let $U_{r,n}$ be the uniform matroid of rank $r$ on $n$ elements and recall that $U_{r,r}\cong B_r$ are Boolean or free matroids. Then direct computation gives $\J(B_1,t)=t+1$ and $$\J(U_{2,n},t)=(n-1)t^2+nt+n-1.$$ Hence $J(U_{2,3},t)=2t^2+3t+2$. Then any deletion is $U_{2,3}\bs e\cong B_2$ and any contraction is $U_{2,3}/e\cong B_1$. Putting this together with Definition \ref{del/con} and assuming that $\J$ is a Tutte-Grothendieck invariant 
$$2t^2+3t+2=a(t^2+2t+1)+b(t+1).$$ However, this is a contradiction since $t+1$ is not a factor of the right hand side. 

The same result for $\M$ needs two more steps. Looking at the same matroid and using \ref{rank2M} we get $$\M(U_{2,3},t)=(t^2-3t+1)(t+1)^2(t-1)^2=a(t+1)^2(t-1)^4+b(t+1)(t-1)^2$$ which reduces to $$b=(t+1)(t^2-3t+1)-a(t+1)(t-1)^2.$$ Then we look at $U_{2,4}$ and again assume $\M$ is a Tutte-Grothendieck invariant $$\M(U_{2,4},t)=(t^2-4t+1)(t+1)^2(t-1)^2=a(t^2-3t+1)(t+1)^2(t-1)^2+b(t+1)(t-1)^2.$$ Inserting the above value for $b$ and reducing we get $$ t^2-4t+1=a(t^2-3t+1)+(t^2-3t+1)-a(t-1)^2$$ which gives $a=1$ and makes $b=-t(t+1)$. But then $$\M (U_{3,4},t)=(t-1)(t^8 - 3t^7 - t^6 + 12t^5 - 2t^4 - 12t^3 + 3t^2 + 5t - 1)$$ which does not have a factor of $t+1$. This is a contradiction since the right hand side $$\M(U_{3,4}\bs e,t)-t(t+1)\M(U_{3,4}/e,t)=\M(U_{3,3},t)-t(t+1)\M(U_{2,3},t)$$ does have a $t+1$ factor. 

\subsection{Valuations} We study the invariant $\M$ over matroid subdivisions. One could focus on a wider range combinatorial objects like posets but we are motived by applications to matroid theory. First we recall the basis matroid polytope (using \cite{Ardila-Sanchez} as our general reference for this material). A matroid $M$ can be defined via its set of bases $\B(M)$ which are all the independent sets of $M$ who's size is the rank of $M$. Then the matroid polytope of $M$ is $$P(M)=\mathrm{Conv}\{e_B|B\in \B(M)\}$$ where $e_B=e_{i_1}+\cdots +e_{i_r}$ with $B=\{i_1,\dots, i_r\}$. Now we need a few key definitions to state our main result.

\begin{definition}

A \emph{matroid polyhedral subdivision} of a matroid polytope $P(M)$ is a collection of polyhedra $\{P_i\}$ such that $\bigcup P_i=P(M)$, each $P_i$ is a matroid polytope whose vertices are vertices of $P(M)$, and if for $i\neq j$ if $P_i\bigcap P_j\neq \emptyset$ then $P_i\bigcap P_j$ is a proper face of both $P_i$ and $P_j$.

\end{definition}

Now we want to know how invariants decompose across subdivisions which gives rise to valuations. We will use what is called a weak valuation in \cite{Ardila-Sanchez} but we follow \cite{AFR-10} and just say valuation. This makes sense since by Theorem 4.2 in \cite{Ardila-Sanchez} for matroids weak valuations are actually strong valuations.

\begin{definition}

Let $\calP$ be the collection of matroid polytopes and $R$ a commutative ring. A function $f:\calP \to R$ is a (weak) \emph{valuation} if for any matroid polytope $P(M)$ and any matroid polyhedral subdivision with maximal pieces $\{P(M_1),\dots ,P(M_k)\}$ we have that $f(\emptyset)0$ and $$f(P(M))=\sum\limits_{\{j_1,\dots ,j_i\}\subseteq [k]}(-1)^{i}f(P(M_{j_1})\cap \cdots \cap P(M_{j_i})).$$

\end{definition}

Finally we can state the result for the invariant $\J$ in terms of valuations.

\begin{proposition}

The polynomial $\J$ is a valuation on matroids.

\end{proposition}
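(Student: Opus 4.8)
The plan is to leverage the factorization $J=\gm\dia\gm$ of Theorem \ref{decomp}, which turns the $J$-characteristic polynomial into a flat-indexed sum of products of M\"obius numbers of minors, and then to feed these pieces into the valuation machinery of Ardila and Sanchez \cite{Ardila-Sanchez}. Concretely, writing $L=L(M)$ for the lattice of flats, Theorem \ref{decomp} gives
$$\J(M,t)=(-1)^{\rk(M)}\sum_{F\in L}\gm(\hat 0,F)\,\gm(F,\hat 1)\,t^{\rk(M)-\rk(F)}.$$
The first idea is that each factor is the M\"obius number of a minor: under the standard isomorphisms $[\hat 0,F]\cong L(M|_F)$ and $[F,\hat 1]\cong L(M/F)$ one has $\gm(\hat 0,F)=\chi(M|_F,0)$ and $\gm(F,\hat 1)=\chi(M/F,0)$, so that the summand attached to $F$ is the product of an invariant of the restriction $M|_F$ and an invariant of the contraction $M/F$.

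Since the valuation condition is an identity in $R[t]$, it holds for $\J$ if and only if it holds for each coefficient, so I would first reduce to showing that for every $m$ the $\ZZ$-valued map $M\mapsto (-1)^{\rk(M)}\sum_{\rk(F)=\rk(M)-m}\gm(\hat 0,F)\gm(F,\hat 1)$ is a valuation. The two ingredients I would then cite from \cite{Ardila-Sanchez} are: (i) the characteristic polynomial of a matroid is a valuation, whence so is any fixed evaluation of it, in particular the M\"obius number $N\mapsto\chi(N,0)=\gm_N(\hat 0,\hat 1)$; and (ii) the coproduct $M\mapsto\sum_{S}(M|_S,M/S)$ of the matroid Hopf monoid respects the valuative relations, so that the convolution of two valuations---the assignment sending $M$ to a sum over a splitting of products of a restriction-valuation and a contraction-valuation---is again a valuation. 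Packaging the flat-indexed sum above as such a convolution of the valuation $\chi(-,0)$ on the restriction side with $\chi(-,0)$ on the contraction side then exhibits each coefficient, and hence $\J$ itself, as a valuation.

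The step I expect to be the main obstacle is matching the sum over flats $F$ that appears in $\J$ with the sum over subsets $S$ produced by the Hopf-monoid coproduct: the convolution of \cite{Ardila-Sanchez} naturally ranges over all $S\subseteq E$, whereas $\J$ only sees closed sets. I would handle this either by collapsing the subset convolution onto flats (grouping the subsets $S$ by their closure, exactly as one passes from the Whitney-rank form of $\chi$ to its flat form), or by invoking the flat/flag version of the valuativity of the coproduct directly, and then checking that the M\"obius-number factors assemble with the correct signs and exponents $t^{\rk(M)-\rk(F)}$. Once this bookkeeping is in place the result is immediate, and it is precisely the product structure $J=\gm\dia\gm$---the same $\dia$-decomposition that drove the generalized Hall, cross-cut, and Weisner theorems---that makes $\J$ fall within the reach of the Ardila--Sanchez framework.
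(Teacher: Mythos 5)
Your proposal follows essentially the same route as the paper: both use the factorization $J=\gm\dia\gm$ to write $\J$ as an Ardila--Sanchez convolution of the M\"obius-number valuation $\chi(-,0)$ on the restriction side with (a $t$-weighted copy of) $\chi(-,0)$ on the contraction side, and then invoke Proposition 7.5 and Theorem C of \cite{Ardila-Sanchez}. Your extra attention to reconciling the flat-indexed sum with the subset-indexed coproduct (e.g.\ noting that non-closed $S$ contribute zero because $M/S$ acquires loops) is a point the paper's proof passes over silently, but it does not change the argument.
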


\begin{proof}

Using Proposition \ref{M-decomp} we know that $$\J(M,t)=(-1)^\rk(M)\sum\limits_{X\in L(M)} \gm(\emptyset ,X)\gm (X,\hat{1})$$ where $\hat{1}$ is the maximal flat of $M$. Hence as a function from the collection of matroids to $\ZZ[t]$ we can represent the function $\J$ as $$\J=(\pm 1)\sum f_1 \star f_2$$ where $f_1 \star f_2=m\circ (f_1\otimes f_2) \circ \Delta_{S>T}$ from the notation in Theorem C in \cite{Ardila-Sanchez} and $f_1=\chi_M (0)$ and $f_2=\chi_M(0)t^{\rk(M)}$. Since $f_1$ and $f_2$ are both Tutte-Grothendieck invariants for matroids and are evaluations of the Tutte polynomial we can conclude that $f_1$ and $f_2$ are both valuations from Proposition 7.5 in \cite{Ardila-Sanchez}. Finally putting it all together Theorem C in \cite{Ardila-Sanchez} finished the result.
\end{proof}

We conclude with a natural question. The polynomial  $\M (L,t)$ is slightly more complicated but has promising properties that seems to imply it should be a valuation.

\begin{question} Is the polynomial $\M$ a matroid valuation? It seems that Proposition \ref{M-decomp} with Proposition 7.5 and Theorem C in \cite{Ardila-Sanchez} is essentially the proof. However that would use that the characteristic polynomial on the flipped lattice of flats $L(M)^{op}$ is a matroid valuation.\end{question}


%
%

\bibliographystyle{amsplain}

\bibliography{whitneyb}

\end{document}